\newtheorem{definition}{Def.}
\newtheorem{theorem}{Theorem}
\newtheorem{lemma}{Lemma}
\newtheorem{proof}{Proof}
\newtheorem{remark}{Remark}
\begin{document}




\title{\large \textbf{BIE and BEM approach for the mixed Dirichlet-Robin boundary value problem for the nonlinear Darcy-Forchheimer-Brinkman system}}


\author{Robert Gutt}

\affil{\small Faculty of Mathematics and Computer Science,
Babe\c{s}-Bolyai University, 1 M. Kog\u{a}l\-niceanu Str., 400084
Cluj-Napoca, Romania}
\date{}
\maketitle

\begin{abstract}
The purpose of this paper is the mathematical analysis of the weak solution of the mixed Dirichlet-Robin boundary value problem for the nonlinear  Darcy-Forch\-heimer-Brinkman system in a bounded, two-dimensional Lipschitz domain, and the application of the corresponding results to the study of the lid-driven flow problem of an incompressible viscous fluid located in a square cavity filled with a porous medium. First we obtain a well-posedness result for the linear Brinkman system with Dirichlet-Neumann boundary conditions, employing a variational approach for the corresponding boundary integral equations. The result is extended afterwards to the Poisson problem for the Brinkman system and to Dirichlet-Robin boundary conditions.  Further, we study the nonlinear Darcy-Forchheimer-Brinkman boundary value problem of Dirichlet and Robin type. Using the Dual Reciprocity Boundary Element Method (DRBEM), we numerically investigate a special Dirichlet-Robin boundary value problem associated to the nonlinear Darcy-Forchheimer-Brinkman system, i.e., the lid-driven cavity flow problem. The physical properties of such a flow are discussed by the geometry of the streamlines of the fluid flow for different Reynolds and Darcy numbers. Moreover, an additional sliding parameter is imposed on the moving wall and we show its importance by segmenting the upper lid into two opposite moving segments.

\textbf{Keywords:}
Nonlinear Darcy-Forchheimer-Brinkman system, Lipschitz domains, layer potential operators,
variational approach, lid-driven cavity flow
\end{abstract}



\section*{Introduction}
\label{Intro} 
Convection phenomena of a viscous, incompressible fluid through a porous media is described by the nonlinear Darcy-Forchheimer Brinkman system, when the inertia of such a fluid is not negligible. Various practical applications arise corresponding to boundary value problems on Lipschitz domains related to this system, due to the evolution of modern technologies, such as the use of geothermal energy in geology, secondary oil recuperation in petroleum engineering, disposal of radioactive wastes the list could go on.

Many different methods have been employed in the mathematical analysis of elliptic boundary value problems, such as variational approaches, potential theory, parametrix (Levi function) integral methods. Moreover, various numerical studies are concerned with such systems going from central differences, finite element methods, meshless methods and also boundary element methods.

We mention the work of Fabes, Kenig and Verchota \cite{Fa-Ke}, which studies the Dirichlet problem for the Stokes system by reducing it to the analysis of related boundary integral equation and the work of Kohr and Wendland \cite{K-W}, who studied the mixed Dirichlet-Neumann problem for the Stokes system on Lipschitz domains employing a variational approach for the related boundary integral equations. Mitrea and Wright \cite{M-W} obtained several well-posedness results for the Stokes system in Lipschitz domains with data in $L^{p}$, Sobolev and Besov spaces.

The Navier-Stokes equation received also a great attention over the last decades. Choe and Kim \cite{Choe-Kim} analyzed the Dirichlet problem for the Navier-Stokes system on bounded Lipschitz domains with connected boundary. By using a double-layer formulation, Russo and Tartaglione \cite{Russo-Tartaglione-2} studied the Robin problem for the Stokes system in bounded or exterior Lipschitz domains in $\mathbb{R}^{3}$ (see also \cite{Russo-Tartaglione-3}  for the Oseen system and \cite{Russo-Tartaglione-4} for the Navier-Stokes system). Also, many studies are concerned with mixed boundary value problems (see, e.g., \cite{M-M}, \cite{Brown}, \cite{Chkadua}). Gatica, Hsiao, Meddahi and Sayas in \cite{Sayas} treated a dual-mixed formulation for an exterior Stokes problem in $\mathbb{R}^{3}$. Let us also mention that, the well-posedness result for the mixed Dirichlet-Robin problem for the Brinkman system in a creased Lipschitz domain $\mathfrak{D} \subset \mathbb{R}^{n} \ (n \geq 3)$ with boundary data in $L^2$-based Sobolev spaces has been recently obtained in \cite[Theorem 6.1]{K-L-W2} (see also \cite[Theorem 7.1]{GuttMMA} for Dirichlet-Neumann boundary conditions and \cite{K-L-W4} for Robin boundary conditions).

Layer potential analysis and fixed point theorems have been used by Kohr, Lanza de Cristoforis and Wendland in \cite{kohrpotanal1} and \cite{K-L-W4} to obtain existence results for nonlinear Neumann-transmission problems for the Stokes and the Brinkman systems on Lipschitz domains in $ \mathbb{R}^{n} $. Also, Dirichlet-transmission problems have been treated in \cite{Kohr-ZAMM}, \cite{Fericean}. Moreover, Kohr, Lanza de Cristoforis, Mikhailov, Wendland \cite{K-L-M-W} obtained well-posedness results for the transmission problems between a bounded and unbounded exterior domain. The  mixed Dirichlet-Neumann problem for the semilinear Darcy-Forchheimer-Brinkman system in a creased Lipschitz domain $\mathfrak{D} \subset \mathbb{R}^{3} $ with boundary data in $L^p$-based Besov spaces has been recently studied in \cite[Theorem 7.1]{GuttMMA}.

One of many model problems which connect the mathematical theory with computational methods is the lid-driven cavity flow problem, due to the reason that it reflects the physical properties of the fluid flow. Steady solutions for the lid-driven problem for the Navier-Stokes system for Reynolds numbers up to $ Re = 20000$ have been presented by Erturk, Corke and Gokcol \cite{Erturk-Gokcol}. Sahin and Owens \cite{Sahin} used the finite volume method to obtain stability solutions on the driven cavity flow. Koseff and Street managed to obtain some experimental results in \cite{Koseff}, \cite{Koseff2}. Yang, Xue and Mathias \cite{Yang} have analysed the flow of viscous fluids in porous media and the dependency of the evolution of recirculating cells on the cavity depth. 

This paper analysis the weak solution of the mixed Dirichlet-Robin boundary value problem for the nonlinear Darcy-Forchheimer-Brinkman system in a bounded, two-dimensional Lipschitz domain. First, we obtain a well-posedness result for the linear Brinkman system with Dirichlet-Neumann boundary conditions, continuing the work of Kohr and Wendland \cite{K-W} for the Stokes system. The result is extended afterwards to the Poisson problem for the Brinkman system and to Dirichlet-Robin boundary conditions, using the Newtonian potentials and the linearity of the solution operator.  Further, we study the nonlinear Darcy-Forchheimer-Brinkman boundary value problem of Dirichlet and Robin type. 

The second part is concerned with numerical simulations of the lid-driven cavity flow problem associated to such a problem, using the Dual Reciprocity Boundary Element Method (DRBEM). The physical properties of such a flow are discussed by the geometry of the streamlines of the fluid flow for different Reynolds and Darcy numbers. Moreover, an additional sliding parameter is imposed on the moving wall and we show its importance by segmenting the upper moving lid into two segments. This work continues the numerical and theoretical studies started in \cite{Gutt} for the nonlinear Darcy-Forchheimer-Brinkman system with Dirichlet or mixed Dirichlet-Robin type boundary conditions.


\section{Preliminaries}

\begin{definition}
\label{LipschitzDomain2D}
Let ${\mathfrak{D}_{+}}\subset \mathbb{R}^{2}$ be an open, connected and bounded set. We say that ${\mathfrak{D}_{+}}$ is a \textit{Lipschitz domain}, if there exists a constant $M>0$ {such} that for each $x\in \Gamma$ there exist {a coordinate system in $\mathbb{R}^{2}$ (isometric to the canonical one)}, $(x',x'')\in \mathbb{R} \times \mathbb{R}$, a constant $r>0$, a cylinder $ {\mathcal C}_{r}(x) \!:=\! \left\{(y',y''):|y'\! -\! x'| \!< \!r, |y''\! -\! x''|  \!< \!2Mr \right\}$, and a Lipschitz function $\phi : \mathbb{R}\rightarrow \mathbb{R}$ with $\| \nabla \phi \|_{L^{\infty}(\mathbb{R})} \leq M$, such that
\begin{equation}
\begin{split}
&{\mathcal C}_{r}(x)\cap {\mathfrak{D}} = \{(y',y''):y'' > \phi(y')\} \cap {\mathcal C}_{r}(x),
\\
&{\mathcal C}_{r}(x)\cap \Gamma = \{(y',y''): y''=\phi(y')\}\cap {\mathcal C}_{r}(x).
\end{split}
\end{equation}
(see, e.g.,\cite[Definition 3.28]{McLean}, \cite[Section 2]{Ott-2013}).
\end{definition}

Let us define \textit{a dissection of the boundary} for the mixed boundary value problem as in \cite[p. 99]{McLean}, \cite[Section 2]{Ott-2013}.
\begin{definition}
\label{Domain}
Consider a bounded Lipschitz domain $\mathfrak{D} := \mathfrak{D}_{+} \subset \mathbb{R}^{2} $ with connected boundary $\Gamma$, which is partitioned into nonempty subsets $\Gamma_{D}$, $\Lambda$ and $\Gamma_{N}$, such that $\Gamma = \Gamma_{D} \cup \Lambda \cup \Gamma_{N}$. Moreover, we assume that $\Gamma_{D}$ and $\Gamma_{N}$ are disjoint, relatively open subset of $\Gamma$, having $\Lambda$ as their common boundary points. For each $x \in \Lambda$, we require that there exists a coordinate system $(x',x'')$, a coordinate cylinder $ {\mathcal C}_{r}(x)$ centered in $x$, a Lipschitz function $\phi$ as in \eqref{LipschitzDomain2D} and a constant $M_{1}$ such that 
\begin{equation}
\label{Domain1}
\begin{split}
&{\mathcal C}_{r}(x)\cap \Gamma_{D} = \{(y',y''): y' > M_{1},\ y''= \phi(y')\} \cap {\mathcal C}_{r}(x),
\\
&{\mathcal C}_{r}(x)\cap \Gamma_{N} = \{(y',y''): y' < M_{1},\ y''=\phi(y')\}\cap {\mathcal C}_{r}(x),
\end{split}
\end{equation}
(see also  \cite{Ott-2015} for a more special decomposition of the boundary).
\end{definition}

Hence, in the sequel, one can assume that ${\rm meas}(\Gamma_{D}) > 0$ and ${\rm meas}(\Gamma_{N}) > 0$.

Let $\mathfrak{D} = \mathfrak{D}_{+}$ be a bounded Lipschitz domain and let $\mathfrak{D}_{-} := \mathbb{R}^{2} \backslash \overline{\mathfrak{D}}$.  Let $\nu$ be the outward unit normal to $\partial \mathfrak{D} = \Gamma$, which exists almost everywhere on $ \Gamma $. For fixed $\kappa = \kappa(\Gamma) > 1 $, sufficiently large, \textit{the non-tangential maximal operator} is defined as (see, e.g., \cite[Section 2.3]{M-W}, \cite[Section 2]{K-L-W2})
\begin{equation}
\label{2.1.1}
\mathscr{N}(\textbf{u})(x) := \{\sup{|\textbf{u}(y)|: y \in \gamma_{\pm}(x)}, \ x \in \Gamma\},
\end{equation}
for arbitrary $\textbf{u}:\mathfrak{D}_{\pm} \rightarrow \mathbb{R}^{2}$, where
\begin{equation}
\gamma_{\pm} (x) := \{y \in \mathfrak{D}_{\pm} : \rm{dist} (x,y) < \kappa \rm{ dist}(y,\Gamma),\  x \in \Gamma\},
\end{equation}
are \textit{nontangential approach regions} located in $\mathfrak{D}_{+}$ and $\mathfrak{D}_{-}$ respectively. Moreover, we consider \textit{the nontangential boundary traces}  $\rm{Tr}^{\pm}$ on $\Gamma$ of a function $\textbf{u}$ defined in $\mathfrak{D}_{\pm}$, as
\begin{equation}
(\rm{Tr}^{\pm}\textbf{u} )(x) := \lim_{\gamma_{\pm} \ni y \rightarrow x} \textbf{u}(y), \qquad x \in \Gamma,
\end{equation}
\begin{equation}
\rm{Tr}^{\pm} : C^{0}(\overline{\mathfrak{D}}_{\pm}) \rightarrow C^{0}(\Gamma), \qquad \rm{Tr}^{\pm} \textbf{u} = \textbf{u}|_{\Gamma}.
\end{equation}
In the sequel, we will use the superscripts $ \pm $ only when we want to point out which domain is involved, else their meaning will be understood from the context.

We denote by $L^{2}(\mathbb{R}^{2})$ the space (equivalence class) of measurable, square integrable function on $\mathbb{R}^{2}$. For $s \in \mathbb{R}$, the $L^{2}-$based Sobolev (Bessel potential) spaces are defined by
\begin{align}
&H^{s}(\mathbb{R}^{2}) = \{ (\mathbb{I} - \Delta)^{-\frac{s}{2}}f : f \in L^{2}(\mathbb{R}^{2})\} =  \{ \mathcal{F}^{-1}(1 + |\zeta|)^{-\frac{s}{2}} \mathcal{F}f : f \in L^{2}(\mathbb{R}^{2})\} \nonumber \\ \nonumber
&H^{s}(\mathbb{R}^{2}, \mathbb{R}^{2}) = \{ {\bf f} = (f_{1}, f_{2}) : f_{j} \in H^{s}(\mathbb{R}^{2}), \ j = 1,2 \},
\end{align}
where $\mathcal{F}$ is the Fourier transform defined on the space of tempered distributions (see, e.g., \cite[Chapter 4]{H-W}).

The space $H^{s}(\mathfrak{D})$ is defined by the restriction to $\mathfrak{D}$ from $H^{s}(\mathbb{R}^{2})$ and $\widetilde{H^{s}}(\mathfrak{D})$  is the space of functions from $H^{s}(\mathbb{R}^{2})$ with compact support in $\mathfrak{D}$. Moreover, the following duality relations hold (see, e.g., \cite[Prop. 2.9]{Je-Ke}, \cite[4.14]{M-T})
\begin{equation}
(H^{s}(\mathfrak{D}))' = \widetilde{H}^{-s}(\mathfrak{D}) \quad (\widetilde{H}^{s}(\mathfrak{D}))' = H^{-s}(\mathfrak{D}).
\end{equation}

For $ s \in [0,1]$, the Sobolev space $H^{s}(\Gamma)$ on the boundary $\Gamma$ can be defined by using the space $H^{s}(\mathbb{R})$, a partition of unity and a pull-back of the local parametrization of $\Gamma$. Let $S \in \{\Gamma_{D}, \Gamma_{N}\} $ be a part of $\Gamma$ as in definitions \ref{Domain1}. Then we define the following spaces
\begin{align}
H^{s}(S) &= \left\{{\bf f}|_{S} : {\bf f} \in H^{s}(\Gamma)   \right\}, \quad
\widetilde{H}^{s}(S) = \left\{{\bf f} \in H^{s}(\Gamma): {\rm supp}\ {\bf f} \subset \overline{S}  \right\},  \\
H^{-s}(S) &= \left\{{\bf f}|_{S} : {\bf f} \in H^{-s}(\Gamma)   \right\}, \quad
\widetilde{H}^{-s}(S) = \left\{{\bf f} \in H^{-s}(\Gamma): {\rm supp}\ {\bf f} \subset \overline{S}  \right\}, \nonumber 
\end{align}
where $(\cdot )|_{S}$ denotes the restriction operator from $\Gamma$ to $S$. The spaces  $H^{\pm s}(S, \mathbb{R}^{2})$ and $\widetilde{H}^{\pm s}(S, \mathbb{R}^{2})$ are defined as vector-valued functions or distributions whose components belong to the spaces $H^{\pm s}(S)$ and $\widetilde{H}^{\pm s}(S)$, respectively.

Moreover, we have the following duality pairing between the spaces $H^{-\frac{1}{2}}(S, \mathbb{R}^{2})$ and $\widetilde{H}^{-\frac{1}{2}}(S, \mathbb{R}^{2})$ and the spaces $\widetilde{H}^{\frac{1}{2}}(S, \mathbb{R}^{2})$ and $H^{\frac{1}{2}}(S, \mathbb{R}^{2})$ respectively, i.e., (see, e.g.,  \cite[4.14]{M-T})
\begin{align}
H^{-\frac{1}{2}}(S,\mathbb{R}^{2}) = \left(\widetilde{H}^{-\frac{1}{2}}(S,\mathbb{R}^{2})\right)' \quad \widetilde{H}^{\frac{1}{2}}(S,\mathbb{R}^{2}) = \left(H^{-\frac{1}{2}}(S,\mathbb{R}^{2})\right)'.
\end{align}

Finally, let us introduce the following subspace. Let $\nu$ be the outward unit normal to the Lipschitz domain $\mathfrak{D}$, which exists almost everywhere on $\Gamma$ with respect to the surface measure on $\Gamma$. Then we define the subspace $H^{\frac{1}{2}}_{\nu}(\Gamma, \mathbb{R}^{2})$ of $H^{\frac{1}{2}}(\Gamma, \mathbb{R}^{2})$ as
\begin{equation}
\label{OrthSubspace}
H^{\frac{1}{2}}_{\nu}(\Gamma, \mathbb{R}^{2}) := \{ {\bf f} \in H^{\frac{1}{2}}(\Gamma, \mathbb{R}^{2}): \langle {\bf f}, \nu \rangle = 0 \}.
\end{equation}

For more details of Sobolev spaces on Lipschitz domains and boundaries, we refer the reader to \cite{M-W, H-W, M-T-Poisson}.

The following lemma introduces an important result related to the trace operator (see, e.g., \cite{Co}, \cite[Prop. 3.3]{Je-Ke} \cite[Theorem 2.5.2]{M-W}, \cite[Lemma 2.6]{Mikh}).
\begin{lemma}
\label{lem 1.5}
Let $ \mathfrak{D} \subset \mathbb{R}^{2} $ be a bounded Lipschitz domain with connected boundary $ \Gamma $. Then there exists a linear and continuous operator, called the trace operator $ \rm{Tr} : H^{1}(\mathfrak{D}, \mathbb{R}^{2})  \rightarrow H^{\frac{1}{2}}(\Gamma, \mathbb{R}^{2}) $,
such that
\begin{equation}
\rm{Tr} \ {\bf f} = {\bf f}|_{\Gamma}, \quad \forall \ {\bf f} \ \in C^{\infty} (\overline{\mathfrak{D}}, \mathbb{R}^{2})
\end{equation}
Moreover, the trace operator is  surjective and has a continuous, bounded right inverse $ \rm{Tr}^{-1}  : H^{\frac{1}{2}}(\Gamma, \mathbb{R}^{2})  \rightarrow H^{1}(\mathfrak{D}, \mathbb{R}^{2}) $.
\end{lemma}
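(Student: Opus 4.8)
The plan is to reduce at once to the scalar case: since $H^{1}(\mathfrak{D},\mathbb{R}^{2})$ and $H^{\frac12}(\Gamma,\mathbb{R}^{2})$ are, by definition, products of the corresponding scalar spaces, it suffices to construct a linear continuous surjection $\mathrm{Tr}:H^{1}(\mathfrak{D})\to H^{\frac12}(\Gamma)$ agreeing with pointwise restriction on $C^{\infty}(\overline{\mathfrak{D}})$ and admitting a bounded right inverse, and then apply it componentwise. The definition of $\mathrm{Tr}$ on smooth functions is unambiguous, so the whole content is a two–sided norm estimate together with the density of $C^{\infty}(\overline{\mathfrak{D}})$ in $H^{1}(\mathfrak{D})$.

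First I would localize. Cover $\Gamma$ by finitely many of the coordinate cylinders $\mathcal{C}_{r_{j}}(x_{j})$ of Definition \ref{LipschitzDomain2D}, add one open set $\mathcal{O}_{0}$ with $\overline{\mathcal{O}_{0}}\subset\mathfrak{D}$ so that $\{\mathcal{O}_{0},\mathcal{C}_{r_{j}}(x_{j})\}$ covers $\overline{\mathfrak{D}}$, and pick a smooth partition of unity $\{\chi_{j}\}$ subordinate to it. Writing $f=\chi_{0}f+\sum_{j}\chi_{j}f$, the interior term $\chi_{0}f$ has zero trace. For each boundary term $\chi_{j}f$, apply the ambient rigid motion and then the bi-Lipschitz flattening map $(y',y'')\mapsto(y',y''-\phi_{j}(y'))$, which carries $\mathcal{C}_{r_{j}}(x_{j})\cap\mathfrak{D}$ onto a bounded piece of the upper half–plane $\mathbb{R}^{2}_{+}=\{(z',z''):z''>0\}$ and $\mathcal{C}_{r_{j}}(x_{j})\cap\Gamma$ onto a piece of $\{z''=0\}$. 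The technical point here is that pullback by a bi-Lipschitz homeomorphism is bounded on $H^{1}$ of the domains involved and on $H^{\frac12}$ of the flattened boundary — this holds precisely because we are at Sobolev orders $1$ and $\tfrac12$, where only first derivatives (respectively the Gagliardo seminorm) enter and a Lipschitz map has an a.e.\ bounded Jacobian; it is also exactly the reason the chart definition of $H^{\frac12}(\Gamma)$ is intrinsic.

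The heart of the matter is the half–plane model. For $g\in C^{\infty}_{c}(\overline{\mathbb{R}^{2}_{+}})$ one has $g(x',0)=-\int_{0}^{\infty}\partial_{x''}g(x',t)\,dt$; taking the Fourier transform in the tangential variable $x'$, a Plancherel identity combined with Cauchy--Schwarz in $t$ against the weight $e^{-t\langle\xi\rangle}$, where $\langle\xi\rangle=(1+|\xi|^{2})^{1/2}$, yields $\|g(\cdot,0)\|_{H^{\frac12}(\mathbb{R})}\le C\|g\|_{H^{1}(\mathbb{R}^{2}_{+})}$, and density extends $\mathrm{Tr}$ continuously. For the right inverse, set $(Eh)(x',x''):=\mathcal{F}^{-1}_{\xi\to x'}\big[e^{-x''\langle\xi\rangle}\,\widehat{h}(\xi)\big]$ for $h\in H^{\frac12}(\mathbb{R})$; a direct Fourier–side computation gives $\|Eh\|_{H^{1}(\mathbb{R}^{2}_{+})}\le C\|h\|_{H^{\frac12}(\mathbb{R})}$ and $\mathrm{Tr}(Eh)=h$, proving surjectivity. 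Finally one reassembles on $\Gamma$: transport $E$ back through the bi-Lipschitz charts, glue the local extensions with the partition of unity, and check that multiplication by the fixed $C^{\infty}$ cutoffs $\chi_{j}$ is bounded on both $H^{1}$ and $H^{\frac12}$, which does not spoil the estimates.

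I expect the main obstacle to be the bookkeeping around the Lipschitz flattening rather than the half–plane estimates: because $\phi_{j}$ is only Lipschitz, its gradient appears in the chain rule as an $L^{\infty}$ — but not continuous — coefficient, so one cannot invoke smooth-diffeomorphism invariance and must verify directly, at the level of the $H^{1}$ norm and the $H^{\frac12}$ Gagliardo seminorm, that the pullback and its inverse are bounded and that the boundary map is compatible with the chart definition of $H^{\frac12}(\Gamma)$. Everything else — the Fourier computations on $\mathbb{R}^{2}_{+}$, the density argument, and the partition-of-unity gluing — is routine. (This is the classical Lipschitz trace theorem; see the references cited before the statement.)
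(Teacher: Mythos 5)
The paper gives no proof of this lemma at all --- it is quoted as a known result with references to Costabel, Jerison--Kenig, Mitrea--Wright and Mikhailov --- and your argument (componentwise reduction, partition of unity, bi-Lipschitz flattening, the half-plane Fourier estimate for the trace and the $e^{-x''\langle\xi\rangle}$ extension operator, then gluing) is precisely the standard proof carried out in those references. Your sketch is correct, and you rightly isolate the only genuinely Lipschitz-specific points: the bi-Lipschitz invariance of $H^{1}$ and of the Gagliardo form of $H^{\frac12}$, and the density of $C^{\infty}(\overline{\mathfrak{D}})$ in $H^{1}(\mathfrak{D})$.
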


The conormal derivative or boundary traction for the Stokes system has been introduced by Mitrea and Wright in the general case of Sobolev spaces defined on Lipschitz domains \cite[Theorem 10.4.1]{M-W}. We recall this notion in the case of the Brinkman system with data in Sobolev spaces $ H^{1}(\mathfrak{D}) $ (see, e.g., \cite[Lemma 2.3]{K-L-W2}, \cite[Lemma 2.4]{kohrpotanal1}). We also refer the reader to \cite[Lemma 3.2]{Co}, \cite[Definition 3.1]{Mikh} and to \cite[Lemma 2.5]{Kohr2016} for the Brinkman systems on compact Riemannian manifolds.

For $\alpha > 0$,  the normalized Brinkman system in a bounded domain $\mathfrak{D}$ is given by the following system
 \begin{equation}
\label{L-operator}
\mathcal{L}_{\alpha}({\bf u},p):= \Delta {\bf u}-\alpha \textbf{u} - \nabla p = {\bf f}, \quad \mbox{div} \ \textbf{u} = 0 \ \ \mbox{in} \ \mathfrak{D},
\end{equation}
where $ \textbf{u} $ and $ p $ are the desired velocity and pressure fields of a viscous incompressible fluid flow (see \cite{kohrpotanal} for a detailed description of the Brinkman operator). 

The space $ H^{1}(\mathfrak{D}, \mathcal{L}_{\alpha}) $ is defined by (see, e.g., \cite[Lemma 2.3]{K-L-W2})
\begin{align}
\label{spciudat}
H^{1}(\mathfrak{D}, \mathcal{L}_{\alpha}) := &\left\{( {\bf u},p;{\bf f}) \in H^{1}(\mathfrak{D}, \mathbb{R}^{2}) \times  L^{2}(\mathfrak{D})  \times  \widetilde{H}^{-1}(\mathfrak{D}, \mathbb{R}^{2}) \right. \\
& \left.\mathcal{L}_{\alpha}({\bf u},p)  =  {\bf f}|_{\mathfrak{D}} \ \mbox{and} \ \mbox{div} \ {\bf u} = 0 \ \mbox{in} \ \mathfrak{D} \right\}. \nonumber
\end{align}

\begin{lemma} 
\label{lem 1.6}
Let $ \mathfrak{D}_{+} = \mathfrak{D} \subset \mathbb{R}^{2}$ be a bounded Lipschitz domain with boundary $ \Gamma $ and let $\mathfrak{D}_{-} =  \mathbb{R}^{2}\setminus \overline{\mathfrak{D}}_{+}$.  Let $ \alpha \ge 0 $ and recall the space defined in (\ref{spciudat}). Then \textit{the conormal derivative operator}
\begin{align}
\partial_{\nu;\alpha} &: H^{1}(\mathfrak{D}_{\pm},\mathcal{L}_{\alpha}) \rightarrow H^{-\frac{1}{2}}(\Gamma,\mathbb{R}^{2}), \\
H^{1}(\mathfrak{D}_{\pm}, \mathcal{L}_{\alpha}) &\ni ({\bf u}, p; {\bf f}) \rightarrow \partial_{\nu;\alpha}({\bf u},p)_{{\bf f}} \in H^{-\frac{1}{2}}(\Gamma, \mathbb{R}^{2}),
\end{align}
defined by the formula \footnote{Let $ \langle \cdot, \cdot \rangle:= \ _{X'}\langle \cdot, \cdot \rangle_{X}$ denote the duality between the dual spaces $ X' $ and $ X $.}
\begin{align}
\label{conormal}
{\pm}\left\langle \partial_{\nu ;\alpha }^{\pm}({\bf u},p;{\bf f} ),
\varphi \right\rangle = 2&\langle {\mathbb E}({\bf
u}),{\mathbb E}(\rm{Tr}^{-1} \varphi )\rangle _{{\mathfrak{D}_{\pm}}}+\alpha \langle {\bf u},\rm{Tr}^{-1} \varphi \rangle _{{\mathfrak{D}_{\pm}}} \\ 
-&\langle p,{\rm{div}}\ \rm{Tr}^{-1} \varphi \rangle _{{\mathfrak{D}_{\pm}}} + \left\langle {\bf f},\rm{Tr}^{-1} \varphi \right\rangle _{{{\mathfrak{D}_{\pm}}}}, \forall \varphi \in H^{\frac{1}{2}}(\Gamma,\mathbb{R}^{2}) \nonumber
\end{align}
is a well-defined, linear and continuous operator, which does not dependent on the choice of the right
inverse $\rm{Tr}^{-1}$. Note that $ E_{jk}({\bf u}) $ are the components of the tensor field $ \mathbb{E}({\bf u}) = \frac{1}{2}(\nabla {\bf u} + (\nabla {\bf u})')$.
In addition the following Green formula holds
\begin{align}
\label{Green formula}
{\pm}\left\langle \partial_{\nu ;\alpha }^{\pm}({\bf u}, p ;{\bf f} ), \rm{Tr} \ {\bf u} \right\rangle \!=\! 2\langle {\mathbb E}({\bf u}),{\mathbb E}({\bf u} )\rangle _{{\mathfrak{D}_{\pm}}}\!+\! \alpha \langle {\bf u},{\bf u} \rangle _{{\mathfrak{D}_{\pm}}}  \!-\! \langle p ,{\rm{div}}\ {\bf u} \rangle _{{\mathfrak{D}_{\pm}}} \!+\! \left\langle {\bf f},{\bf u} \right\rangle _{{{\mathfrak{D}_{\pm}}}}, 
\end{align}
for all $ ({\bf u}, p ; {\bf f}) \in H^{1}(\mathfrak{D}_{\pm}, \mathcal{L}_{\alpha}) $.
\end{lemma}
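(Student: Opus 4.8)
The plan is to establish Lemma~\ref{lem 1.6} by first treating the case $\mathbf{f}=0$ of divergence-free Brinkman fields on $\mathfrak{D}_{\pm}$ and then incorporating the inhomogeneity. First I would fix an arbitrary bounded right inverse $\mathrm{Tr}^{-1}$ of the trace operator, whose existence and continuity are guaranteed by Lemma~\ref{lem 1.5}, and set $\mathbf{w}:=\mathrm{Tr}^{-1}\varphi\in H^{1}(\mathfrak{D}_{\pm},\mathbb{R}^{2})$. The right-hand side of the defining formula \eqref{conormal} is then a sum of four pairings: the bilinear form $2\langle\mathbb{E}(\mathbf{u}),\mathbb{E}(\mathbf{w})\rangle_{\mathfrak{D}_{\pm}}$, the zeroth-order term $\alpha\langle\mathbf{u},\mathbf{w}\rangle_{\mathfrak{D}_{\pm}}$, the pressure term $-\langle p,\operatorname{div}\mathbf{w}\rangle_{\mathfrak{D}_{\pm}}$, and the source term $\langle\mathbf{f},\mathbf{w}\rangle_{\mathfrak{D}_{\pm}}$. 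Each of these is manifestly continuous in $\mathbf{w}$ with respect to the $H^{1}(\mathfrak{D}_{\pm},\mathbb{R}^{2})$-norm: the first by Cauchy--Schwarz and $\|\mathbb{E}(\mathbf{w})\|_{L^{2}}\le\|\nabla\mathbf{w}\|_{L^{2}}$, the second and third by Cauchy--Schwarz in $L^{2}$, and the fourth since $\mathbf{f}\in\widetilde{H}^{-1}(\mathfrak{D}_{\pm},\mathbb{R}^{2})$ pairs continuously with $H^{1}_{0}$-type functions extended by the duality $(H^{1}(\mathfrak{D}))'=\widetilde{H}^{-1}(\mathfrak{D})$ recalled earlier. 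Composing with the bounded operator $\mathrm{Tr}^{-1}$, the functional $\varphi\mapsto(\text{RHS of }\eqref{conormal})$ is a bounded linear functional on $H^{\frac{1}{2}}(\Gamma,\mathbb{R}^{2})$, hence defines an element of its dual $H^{-\frac{1}{2}}(\Gamma,\mathbb{R}^{2})$, with operator norm controlled by $\|\mathbf{u}\|_{H^{1}}+\|p\|_{L^{2}}+\|\mathbf{f}\|_{\widetilde{H}^{-1}}$ times $\|\mathrm{Tr}^{-1}\|$; linearity and continuity of $\partial_{\nu;\alpha}^{\pm}$ in the triple $(\mathbf{u},p;\mathbf{f})$ follow at once.

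The substantive point is independence of the choice of right inverse. Suppose $\mathrm{Tr}^{-1}_{1}$ and $\mathrm{Tr}^{-1}_{2}$ are two bounded right inverses; then $\mathbf{z}:=(\mathrm{Tr}^{-1}_{1}-\mathrm{Tr}^{-1}_{2})\varphi$ satisfies $\mathrm{Tr}\,\mathbf{z}=0$, i.e.\ $\mathbf{z}\in\widetilde{H}^{1}(\mathfrak{D}_{\pm},\mathbb{R}^{2})$ (the closure of $C_{c}^{\infty}(\mathfrak{D}_{\pm})$ in $H^{1}$), since on a Lipschitz domain the kernel of the trace is exactly $H^{1}_{0}$. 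Hence it suffices to show the four-term expression vanishes when $\mathbf{w}$ is replaced by any $\mathbf{z}\in C_{c}^{\infty}(\mathfrak{D}_{\pm},\mathbb{R}^{2})$ and then pass to the limit by the continuity just established. For smooth compactly supported $\mathbf{z}$ one integrates by parts: $2\langle\mathbb{E}(\mathbf{u}),\mathbb{E}(\mathbf{z})\rangle=2\langle\mathbb{E}(\mathbf{u}),\nabla\mathbf{z}\rangle=-2\langle\operatorname{div}\mathbb{E}(\mathbf{u}),\mathbf{z}\rangle$, and using $2\operatorname{div}\mathbb{E}(\mathbf{u})=\Delta\mathbf{u}+\nabla(\operatorname{div}\mathbf{u})=\Delta\mathbf{u}$ (as $\operatorname{div}\mathbf{u}=0$), together with $-\langle p,\operatorname{div}\mathbf{z}\rangle=\langle\nabla p,\mathbf{z}\rangle$, the sum collapses to $\langle-\Delta\mathbf{u}+\nabla p+\alpha\mathbf{u}+\mathbf{f},\mathbf{z}\rangle=\langle-\mathcal{L}_{\alpha}(\mathbf{u},p)+\mathbf{f},\mathbf{z}\rangle=0$ precisely because $\mathcal{L}_{\alpha}(\mathbf{u},p)=\mathbf{f}|_{\mathfrak{D}_{\pm}}$ by membership in $H^{1}(\mathfrak{D}_{\pm},\mathcal{L}_{\alpha})$. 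The care needed here is that these manipulations are a priori only formal when $\mathbf{u}\in H^{1}$ and $p\in L^{2}$; the clean way is to read every equality as an identity of distributions tested against $\mathbf{z}\in C_{c}^{\infty}$, where $\Delta\mathbf{u}$, $\nabla p$ and $\operatorname{div}\mathbb{E}(\mathbf{u})$ are well-defined distributions and the stated integration-by-parts identities hold by definition of distributional derivatives, with no boundary terms because $\mathbf{z}$ has compact support.

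Finally, the Green formula \eqref{Green formula} is obtained by specializing $\varphi=\mathrm{Tr}\,\mathbf{u}$ in \eqref{conormal}, which is legitimate since $\mathbf{u}\in H^{1}(\mathfrak{D}_{\pm},\mathbb{R}^{2})$ gives $\mathrm{Tr}\,\mathbf{u}\in H^{\frac{1}{2}}(\Gamma,\mathbb{R}^{2})$ by Lemma~\ref{lem 1.5}. For this one wants to take $\mathrm{Tr}^{-1}(\mathrm{Tr}\,\mathbf{u})=\mathbf{u}$ itself as the right inverse applied to this particular boundary datum; this is permissible precisely because of the independence property just proved — the value of $\langle\partial_{\nu;\alpha}^{\pm}(\mathbf{u},p;\mathbf{f}),\mathrm{Tr}\,\mathbf{u}\rangle$ does not depend on which extension of $\mathrm{Tr}\,\mathbf{u}$ we plug in, and $\mathbf{u}$ is one such extension — whereupon the four terms of \eqref{conormal} become exactly the four terms on the right of \eqref{Green formula}.

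The main obstacle I anticipate is the rigorous justification of the integration-by-parts step in the independence argument: one must be scrupulous that for merely $H^{1}$ velocity and $L^{2}$ pressure the quantities $\Delta\mathbf{u}$ and $\nabla p$ exist only as distributions in $H^{-1}(\mathfrak{D}_{\pm})$ and $H^{-1}(\mathfrak{D}_{\pm})$ respectively, that the constraint $\mathcal{L}_{\alpha}(\mathbf{u},p)=\mathbf{f}$ is itself an identity in $\widetilde{H}^{-1}(\mathfrak{D}_{\pm},\mathbb{R}^{2})$, and that the pairing $\langle\mathbf{f},\mathbf{z}\rangle$ makes sense and is continuous for $\mathbf{z}\in H^{1}_{0}$; once the whole computation is phrased as testing a fixed distributional identity against $C_{c}^{\infty}$ functions and then extending by density and continuity to the full kernel of $\mathrm{Tr}$, everything goes through. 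A secondary technical point worth flagging is the identification of $\ker(\mathrm{Tr})$ with $\widetilde{H}^{1}(\mathfrak{D}_{\pm},\mathbb{R}^{2})=\overline{C_{c}^{\infty}(\mathfrak{D}_{\pm})}^{\,H^{1}}$, which on a Lipschitz domain is standard (see, e.g., \cite{McLean}) but should be cited.
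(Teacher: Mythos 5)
Your proof is correct. Note that the paper itself states this lemma without proof, citing it from the literature (e.g.\ Kohr--Lanza de Cristoforis--Wendland, Costabel, Mikhailov); your argument --- boundedness of the four pairings composed with a fixed right inverse $\rm{Tr}^{-1}$, independence of the extension via testing the distributional identity $\mathcal{L}_{\alpha}({\bf u},p)={\bf f}|_{\mathfrak{D}_{\pm}}$ against $C_c^{\infty}$ and density of $C_c^{\infty}$ in $\ker \rm{Tr}$, and then specializing $\varphi=\rm{Tr}\,{\bf u}$ with ${\bf u}$ itself as the extension to get \eqref{Green formula} --- is exactly the standard proof given in those references, with the distributional subtleties handled appropriately.
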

Throughout this paper the notation $\partial^{+}_{\nu ;\alpha }(\textbf{u}, p) $ will denote the conormal derivative operator for the homogeneous problem, i.e., $\textbf{f} = \textbf{0}$. For simplicity, we will drop the superscript $+$ when working with a bounded Lipschitz domain $\mathfrak{D} = \mathfrak{D}_{+}$.

\subsection{The fundamental solution for the Brinkman operator in $\mathbb{R}^{n}$}
Let us denote by $\mathcal{G}^{\alpha} ({\bf x}, {\bf y}) $ and $\Pi^{\alpha} ({\bf x}, {\bf y}) $ \textit{the fundamental velocity tensor} and \textit{the fundamental pressure vector} for the Brinkman system in $\mathbb{R}^{2}$ given by
(see, e.g., \cite[(3.6)]{McCracken1981}, \cite[2.4.20]{KohrPop})
\begin{equation}
\label{E41}
\mathcal{G}^{\alpha}_{jk}({\bf x})=\frac{1}{2\pi} \left\{ \delta_{jk}  A_{1}(\sqrt{\alpha} |{\bf x}|)+ \frac{x_{j} x_{k}}{|{\bf x}|^{2}} A_{2}(\sqrt{\alpha} |{\bf x}|) \right\},\ \
\Pi_{k}^{\alpha}(x)= \frac{1}{2\pi}\frac{x_{k}}{|{\bf x}|^{2}}
\end{equation}
where $A_{1}(z)$ and $A_{2}(z)$ are defined by
\begin{align}
\label{A1A2}
A_{1}(z):=  K_{0}(z)+\frac{ K_{1}(z)}{z}-\dfrac{1}{z^{2}},\
A_{2}(z):= \frac{2}{z^{2}}- K_{2}(z),
\end{align}
$K_{{\varkappa} }$ is the Bessel function of the second kind and order ${\varkappa} \ge 0$.

\textit{The fundamental stress tensor} $\textbf{S}^{\alpha}$ and the \textit{the fundamental pressure tensor}$\boldsymbol\Lambda^{\alpha}$ have the components (see, e.g., \cite[(2.18)]{Varnhorn},  \cite[Section 2.3]{kohrpotanal1})
\begin{align}
\label{stress-tensor-alpha}
S^{\alpha }_{ij\ell }({\bf x}, {\bf y}) &=-{\Pi}^{\alpha}_{j}({\bf x}, {\bf y}) \delta_{i\ell } + \dfrac{\partial \mathcal{G}^{\alpha}_{ij}({\bf x}, {\bf y})}{\partial x_{\ell}} + \dfrac{\partial \mathcal{G}^{\alpha}_{\ell j}({\bf x}, {\bf y}) }{\partial x_{i}}, \quad \\
\label{dl-pressure2D}
\Lambda^{\alpha}_{ik}({\bf x},{\bf y}) &=\!\frac{1}{2\pi}\left\{-(y_i-x_i)\frac{4(y_k-x_k)}{|{\bf y}-{\bf x}|^{4}} \!-\! (\alpha |{\bf y}-{\bf x}|^{2} \log |{\bf y} -{\bf x}| \!+\! 2)\frac{\delta _{ik}}{|{\bf y}-{\bf x}|^2} \right\},
\end{align}
where $\delta_{jk} $ is the Kronecker symbol, ${\Pi}^{\alpha}_{j}({\bf x}, {\bf y})$ are the components of ${\Pi}^{\alpha}$, and $\mathcal{G}^{\alpha}_{ij}({\bf x}, {\bf y})$ are the components of $\mathcal{G}^{\alpha}({\bf x}, {\bf y})$.

For a given density $\textbf{g}\in H^{-\frac{1}{2}}(\Gamma, \mathbb{R}^{2})$,  the \textit{single-layer velocity potential} for the Brinkman system, ${\textbf{V}}_{\alpha; \Gamma }{\bf g}$, and the corresponding scalar \textit{pressure potential}, $Q^{s}_{\alpha; \Gamma }{\bf g}$, are given by
\begin{equation}
\label{single-layer}
(\textbf{V}_{\alpha; \Gamma } \textbf{g})(x) := \langle \mathcal{G}^{\alpha}(x,\cdot)|_{\Gamma}, {\bf g} \rangle_{\Gamma}, \
(Q^{s}_{\alpha; \Gamma } \textbf{g} )(x) := \langle {\Pi}^{\alpha}(x,\cdot)|_{\Gamma}, {\bf g}\rangle_{\Gamma}, \ x \in \mathbb{R}^{2}\setminus \Gamma.
\end{equation}

For a given density ${\textbf h}\in H^{\frac{1}{2}}(\Gamma, \mathbb{R}^{2})$ the \textit{double-layer velocity potential}, ${\bf W}_{\alpha; \Gamma  }{\bf h}$, and the corresponding scalar \textit{pressure potential}, $Q^{d}_{\alpha; \Gamma }\textbf{h}$, are defined by
\begin{align}
\label{dl-velocity}
&({\bf W}_{\alpha; \Gamma }{\textbf h})_{j}({\bf x}) := \int_{\Gamma} S^{\alpha}_{ij\ell } ({\bf x},{\bf y})\nu _{\ell}({\bf y}) h_{i}({\bf y}) d\sigma _y, \ \ \forall \ {\bf x}\in \mathbb{R}^{2}\setminus \Gamma,\\
\label{dl-pressure-field}
&(Q^{d}_{\alpha; \Gamma } \textbf{h})({\bf x}) := \int_{\Gamma} \Lambda^{\alpha }_{j\ell}({\bf x},{\bf y})\nu _{\ell}({\bf y}) h_{j}({\bf y}) d\sigma _y,\ \ \forall \ {\bf x}\in \mathbb{R}^{2}\setminus \Gamma,
\end{align}
where $\nu _\ell $, $\ell =1,2$, are the components of the outward unit normal $ \nu $ to ${\mathfrak{D}}$, which is defined almost everywhere (with respect to the surface measure $\sigma $) on $\Gamma$.

The main properties of layer potential {operators} for the Brinkman system are collected bellow (cf, e.g., \cite[Proposition 3.4]{Dindos}, \cite[Lemma 3.1]{kohrpotanal1}, \cite[Lemma 3.1]{K-L-W2} \cite[Theorem 3.1]{M-T} and \cite[Lemma 3.4]{K-L-M-W} for the layer potentials defined on $\mathbb{R}^{3}$).
\begin{theorem}
\label{layer-potential-properties3}
Let $\mathfrak{D}_{+}\subset {\mathbb R}^2$  be a bounded Lipschitz domain with connected boundary $\Gamma$ and let $\mathfrak{D}_{-}:={\mathbb R}^n\setminus \overline{\mathfrak{D}}_+$.
Let  $\alpha >0$.  Let ${\bf h}\in H^{\frac{1}{2}}(\Gamma,{\mathbb R}^2)$ and ${\bf g}\in H^{-\frac{1}{2}}(\Gamma,{\mathbb R}^2)$, then the following relations hold almost everywhere on $\Gamma $,
\begin{align}
\label{68}
&{\rm{Tr}^{+}} \big({\bf V}_{\alpha; \Gamma }{\bf g}\big)
={\rm{Tr}^{-}} \big({\bf V}_{\alpha; \Gamma  }{\bf g}\big)
=:{\mathcal V}_{\alpha; \Gamma  }{\bf g},
\quad\forall\ {\bf g}\in H^{-\frac{1}{2}}(\Gamma,{\mathbb R}^2);\\
\label{68-s1}
& \frac{1}{2}{\bf h}+ {\rm{Tr}}^{+}{({\bf W}_{\alpha; \Gamma}{\bf h})}
=-\frac{1}{2}{\bf h}+ {\rm{Tr}}^{-} {({\bf W}_{\alpha; \Gamma}{\bf h})}
=:{\bf K}_{\alpha; \Gamma  }{\bf h},
\quad\forall\ {\bf h}\in H^{\frac{1}{2}}(\Gamma,{\mathbb R}^2);\\
\label{70aaa}
-&\frac{1}{2}{\bf g}\!+\!\partial^{+}_{\alpha;\nu}\left({\bf V}_{\alpha; \Gamma  }{\bf g},{\mathcal Q}^{s}{\bf g}\right)
\!=\!\frac{1}{2}{\bf g}\!+\!\partial^{-}_{\alpha;\nu}\left({\bf V}_{\alpha; \Gamma  }{\bf g},{\mathcal Q}^{s}{\bf g}\right)
\!=:\!{\bf K}_{\alpha; \Gamma  }^*{\bf g},
\ \forall\ {\bf g}\in H^{-\frac{1}{2}}(\Gamma,{\mathbb R}^2);\\
\label{70aaaa}
&\partial^{+}_{\alpha;\nu}\big({\bf W}_{\alpha;\Gamma }{\bf h},{\mathcal Q}_{\alpha; \Gamma  }^d{\bf h}\big)
=\partial^{-}_{\alpha;\nu}\big({\bf W}_{\alpha;\Gamma }{\bf h},{\mathcal Q}_{\alpha; \Gamma  }^d{\bf h}\big)
=:-{\bf D}_{\alpha; \Gamma  }{\bf h},
\quad\forall\ {\bf h}\in H^{\frac{1}{2}}(\Gamma,{\mathbb R}^2);
\end{align}
where ${\bf K}_{\alpha; \Gamma}^*$ is the transpose of ${\bf K}_{\alpha;\Gamma}$ and the following integral operators are linear and bounded,
\begin{align}
\label{ss-s2}
&{\mathcal V}_{\alpha;\Gamma }:H^{-\frac{1}{2}}(\Gamma,{\mathbb R}^2)\to H^{\frac{1}{2}}(\Gamma,{\mathbb R}^2),\
{\bf K}_{\alpha;\Gamma }:H^{\frac{1}{2}}(\Gamma,{\mathbb R}^2)\to H^{\frac{1}{2}}(\Gamma,{\mathbb R}^2),\\
\label{ds-s2}
&{\bf K}_{\alpha;\Gamma }^*:H^{-\frac{1}{2}}(\Gamma,{\mathbb R}^2)\to H^{-\frac{1}{2}}(\Gamma,{\mathbb R}^2),\ {\bf D}_{\alpha;\Gamma}:H^{\frac{1}{2}}(\Gamma,{\mathbb R}^2)\to H^{-\frac{1}{2}}(\Gamma,{\mathbb R}^2).
\end{align}
\end{theorem}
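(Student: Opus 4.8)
The statement to prove is Theorem~\ref{layer-potential-properties3}, collecting the jump relations and mapping properties of the single- and double-layer potentials for the Brinkman system $\mathcal{L}_\alpha$ in the two-dimensional setting. The plan is to reduce everything to the already-established Stokes case (the case $\alpha = 0$, available in \cite{Fa-Ke}, \cite{M-W}, \cite{M-T}) by exploiting that the Brinkman fundamental solution differs from the Stokes one by a smoother remainder. Concretely, I would first record that, from the explicit formulas \eqref{E41}--\eqref{A1A2} together with the small-argument expansions $K_0(z) = -\log z + O(1)$, $K_1(z) = z^{-1} + O(z\log z)$, $K_2(z) = 2 z^{-2} + O(\log z)$, the functions $A_1$ and $A_2$ satisfy $A_1(z) = -\log z + c + O(z^2\log z)$ and $A_2(z) = O(z^2 \log z)$ near $z=0$; hence $\mathcal{G}^\alpha_{jk}(\mathbf{x}) = \mathcal{G}^0_{jk}(\mathbf{x}) + R^\alpha_{jk}(\mathbf{x})$, where $\mathcal{G}^0$ is the Stokes (Oseen--type) kernel and $R^\alpha$ has a weaker, $O(|\mathbf{x}|^2\log|\mathbf{x}|)$-type singularity at the diagonal, with one and two derivatives correspondingly less singular; similarly $\Pi^\alpha_k = \Pi^0_k$ exactly, and $S^\alpha_{ij\ell}$, $\Lambda^\alpha_{i k}$ split as the Stokes stress/pressure kernels plus remainders whose singularity is one order milder.

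The second step is to transfer the boundedness claims \eqref{ss-s2}--\eqref{ds-s2}. For the Stokes parts, these are exactly the classical results of Mitrea--Wright and Fabes--Kenig--Verchota on Lipschitz domains, which I would cite. For the remainder parts, the kernels are weakly singular (integrable on $\Gamma$) with integrable first derivatives, so the associated operators are in fact compact from $H^{s}(\Gamma,\mathbb{R}^2)$ into $H^{s+1}(\Gamma,\mathbb{R}^2) \hookrightarrow H^{s}(\Gamma,\mathbb{R}^2)$ for the relevant $s = \pm\frac12$; a standard argument via the Calder\'on--Zygmund / Lipschitz-graph machinery, or directly via mollification and the Rellich embedding, gives the boundedness (indeed compactness) of $\mathcal{V}_{\alpha;\Gamma} - \mathcal{V}_{0;\Gamma}$, $\mathbf{K}_{\alpha;\Gamma} - \mathbf{K}_{0;\Gamma}$, $\mathbf{K}^*_{\alpha;\Gamma} - \mathbf{K}^*_{0;\Gamma}$, and $\mathbf{D}_{\alpha;\Gamma} - \mathbf{D}_{0;\Gamma}$ between the indicated spaces. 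Combined with the Stokes case this yields \eqref{ss-s2}--\eqref{ds-s2}. That $\mathbf{K}^*_{\alpha;\Gamma}$ is the transpose of $\mathbf{K}_{\alpha;\Gamma}$ follows from the symmetry relation $S^\alpha_{ij\ell}(\mathbf{x},\mathbf{y})\nu_\ell(\mathbf{y})$ versus the conormal of $\mathcal{G}^\alpha$, i.e. from the fact that the kernel of $\mathbf{K}^*_{\alpha;\Gamma}$ at $(\mathbf{x},\mathbf{y})$ is the transpose of that of $\mathbf{K}_{\alpha;\Gamma}$ at $(\mathbf{y},\mathbf{x})$, which is immediate from the definitions \eqref{stress-tensor-alpha}, \eqref{single-layer}, \eqref{dl-velocity} and the formula \eqref{conormal} for $\partial_{\nu;\alpha}$.

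The third step is the jump relations \eqref{68}--\eqref{70aaaa}. For \eqref{68} (continuity of the single-layer trace) and \eqref{68-s1}, \eqref{70aaa} (the $\pm\tfrac12$ jumps of the double layer and of the conormal of the single layer), the Stokes parts are classical; for the Brinkman remainders the kernels are continuous up to the boundary (or have only weakly singular, jump-free behavior), so they contribute no jump and the $\pm\frac12\,\mathbf{h}$, $\pm\frac12\,\mathbf{g}$ terms come entirely from the Stokes (in fact Laplace/Stokes principal) part; I would make this precise by writing $\mathbf{V}_{\alpha;\Gamma} = \mathbf{V}_{0;\Gamma} + \mathbf{V}^R_{\alpha;\Gamma}$ etc., noting $\mathbf{V}^R_{\alpha;\Gamma}\mathbf{g}$, $\mathbf{W}^R_{\alpha;\Gamma}\mathbf{h}$ extend continuously across $\Gamma$ with matching conormal derivatives, and invoking that $(\mathbf{V}^R_{\alpha;\Gamma}\mathbf{g},\,\mathcal{Q}^{s}\mathbf{g} - \mathcal{Q}^{s}_0\mathbf{g})$, $(\mathbf{W}^R_{\alpha;\Gamma}\mathbf{h},\ldots)$ are in $H^1(\mathfrak{D}_\pm,\mathcal{L}_\alpha)$ with the Brinkman–to–Stokes lower-order term $-\alpha\mathbf{u}$ absorbed into the data, so Lemma~\ref{lem 1.6} applies. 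Equality \eqref{70aaaa} (continuity of the conormal of the double layer, and the definition of the hypersingular operator $\mathbf{D}_{\alpha;\Gamma}$) then follows the same way, the continuity across $\Gamma$ being the Stokes hypersingular continuity plus a jump-free remainder. I expect the main obstacle to be the careful bookkeeping at the second and third steps: verifying, from the Bessel-function asymptotics, exactly how many orders of regularity the remainder kernels $R^\alpha$, $S^\alpha - S^0$, $\Lambda^\alpha - \Lambda^0$ gain (including the logarithmic factors peculiar to $2$D), and checking that the pressure potentials are paired correctly so that each potential pair genuinely lies in $H^1(\mathfrak{D}_\pm,\mathcal{L}_\alpha)$; once that is in hand, the jump and mapping statements reduce to the cited Stokes results plus compact (hence jump-free and bounded) perturbations.
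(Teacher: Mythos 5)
The paper does not actually prove Theorem~\ref{layer-potential-properties3}: it is stated as a collection of known facts, with the proof delegated to the cited references (\cite[Proposition 3.4]{Dindos}, \cite[Lemma 3.1]{kohrpotanal1}, \cite[Lemma 3.1]{K-L-W2}, \cite[Theorem 3.1]{M-T}). Your strategy --- write the Brinkman fundamental solution as the Stokes one plus a less singular remainder, get the mapping properties and the $\pm\tfrac12$ jumps from the Stokes theory on Lipschitz domains, and treat the remainders as smoothing (indeed compact, jump-free) perturbations, with Lemma~\ref{lem 1.6} handling the conormal derivatives --- is exactly the argument used in those references, so you have reconstructed the intended proof rather than found an alternative one. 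The one concrete error is in your Bessel asymptotics: from $K_2(z)=2z^{-2}-\tfrac12+O(z^2\log z)$ and \eqref{A1A2} one gets $A_2(z)=\tfrac12+O(z^2\log z)$, not $A_2(z)=O(z^2\log z)$ (and likewise $A_1(z)=-\tfrac12\log z+c+O(z^2\log z)$). The nonzero limit $A_2(0)=\tfrac12$ is precisely what reproduces the $\frac{x_jx_k}{|{\bf x}|^2}$ part of the two-dimensional Stokes kernel in \eqref{E41}; as you wrote it, your expansion would contradict your own conclusion that $\mathcal{G}^\alpha-\mathcal{G}^0=O(|{\bf x}|^2\log|{\bf x}|)$, since the tangential part of the Stokes kernel would then be missing from $\mathcal{G}^\alpha$ altogether. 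With those constants corrected, the decomposition, the compactness of the remainder operators between the spaces in \eqref{ss-s2}--\eqref{ds-s2}, and the transfer of the jump relations \eqref{68}--\eqref{70aaaa} from the Stokes case all go through as you describe.
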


\section{Formulation of the problem}
Let $ \mathfrak{D} \subset \mathbb{R}^{2}$ be a bounded Lipschitz domain with connected boundary $ \Gamma = \partial \mathfrak{D} $, which is decomposed into two adjacent, nonoverlapping parts $\Gamma_{D}, \Gamma_{N} $  as in definition \ref{Domain}.
Then, we consider the mixed problem with Dirichlet and Neumann boundary conditions for the Brinkman system
\begin{equation}
\label{Brinkman system}
\left\{
\begin{array}{l}
\Delta \textbf{u} - \alpha \textbf{u} - \nabla p = 0 \  \mbox{in} \ \mathfrak{D},  \\
\mbox{div} \ \textbf{u} = 0  \ \mbox{in} \ \mathfrak{D},  \\
(\mbox{Tr} \ \textbf{u})|_{\Gamma_D} = \textbf{f} \in H^{\frac{1}{2}}(\Gamma_{D},\mathbb{R}^{2}), \\
(\partial_{\nu;\alpha}(\textbf{u}, p ))_{\Gamma_{N}}  = \textbf{g} \in H^{-\frac{1}{2}}(\Gamma_{N},\mathbb{R}^{2}).
\end{array}
\right.
\end{equation}
where $(\cdot )|_{\Gamma_D}$ denotes the restriction operator from the Sobolev space $H^{\frac{1}{2}}(\Gamma,\mathbb{R}^{2})$ to $H^{\frac{1}{2}}(\Gamma_{D},\mathbb{R}^{2})$, and $(\cdot) |_{\Gamma_N}$ is the restriction from $H^{-\frac{1}{2}}(\Gamma,\mathbb{R}^{2})$ to $H^{-\frac{1}{2}}(\Gamma_{N},\mathbb{R}^{2})$.

In order to prove the well-posedness of the boundary value problem (\ref{Brinkman system}), we will reformulate the problem as a system of boundary integral equations (BIE's), inspired by the main ideas in \cite{K-W}. We start with the Green representation formula of a weak solution (see, e.g., \cite[Lemma 3.7]{GuttMMA} and also \cite[(2.3.13) and (2.3.14)]{H-W} and \cite[Proposition 4.15]{M-W} for $\alpha = 0$)
\begin{equation}
\label{direct Approach}
\textbf{u}(x) = \textbf{V}_{\alpha;\Gamma} (\partial_{\nu;\alpha}(\textbf{u}, p )) - \textbf{W}_{\alpha; \Gamma} (\mbox{Tr} \ \textbf{u}), \quad
p(x) = Q^{s}_{\alpha;\Gamma}  (\partial_{\nu;\alpha}(\textbf{u}, p)) - Q^{d}_{\alpha; \Gamma}  ({\rm Tr} \ \textbf{u}).
\end{equation}

Letting $x \to \Gamma$ from inside of $\mathfrak{D}$, and following the jump relations for a double-layer potential and the conormal derivation of a single-layer potential, we obtain the following integral equations on $\Gamma$ for the Brinkman system (\ref{Brinkman system})
\begin{align}
\label{CP1}
 \mathcal{V}_{\alpha;\Gamma}(\partial_{\nu;\alpha}(\textbf{u},p)) -  \left( \frac{1}{2} \mathbb{I} + \textbf{K}_{\alpha;\Gamma}  \right){\rm Tr} \ \textbf{u}  = 0  \\
\label{CP2}
\left(- \frac{1}{2} \mathbb{I} + \textbf{K}^{*}_{\alpha;\Gamma}  \right)(\partial_{\nu;\alpha}(\textbf{u}, p)) + \textbf{D}_{\alpha;\Gamma} {\rm Tr} \  \textbf{u} = 0.
\end{align}

According to the definition of the spaces $H^{\frac{1}{2}}(\Gamma_{D}, \mathbb{R}^{2})$ and $ H^{-\frac{1}{2}}(\Gamma_{N}, \mathbb{R}^{2})$, there exist some extensions $\tilde{\bf f} \in H^{\frac{1}{2}}(\Gamma, \mathbb{R}^{2})$ and $\tilde{\bf g} \in H^{-\frac{1}{2}}(\Gamma, \mathbb{R}^{2})$ such that $\tilde{\bf f}|_{\Gamma_{D}} = {\bf f}$ and $\tilde{\bf g}|_{\Gamma_{N}} = {\bf g}$. Moreover, we assume without loss of generality that the extension $\tilde{\bf f}$ of the Dirichlet data ${\bf f}$ satisfies the orthogonality relation $\langle \tilde{\bf f}, \nu \rangle = 0$, i.e., $\tilde{\bf f} \in H^{\frac{1}{2}}_{\nu}(\Gamma,\mathbb{R}^{2})$ (see, e.g., \cite[3.45]{K-W}).

For the sake of completion, we show that for every ${\bf f} \in H^{\frac{1}{2}}(\Gamma_{D},\mathbb{R}^{2})$, there exists an extension $\tilde{\bf f} \in  H^{\frac{1}{2}}_{\nu}(\Gamma,\mathbb{R}^{2})$. To this end, we notice that $
\nu|_{\Gamma_{N}} \in H^{-\frac{1}{2}}(\Gamma_{N},\mathbb{R}^{2}) = (\widetilde{H}^{\frac{1}{2}}(\Gamma_{N},\mathbb{R}^{2}))'$.
Since $ \widetilde{H}^{\frac{1}{2}}(\Gamma_{N},\mathbb{R}^{2})$ is a Hilbert space, the Riesz representation theorem states that there exists a unique $\mu \in \widetilde{H}^{\frac{1}{2}}(\Gamma_{N},\mathbb{R}^{2}) \subset H^{\frac{1}{2}}(\Gamma,\mathbb{R}^{2})$ which satisfies the equation
\begin{equation}
\label{MuDef}
\langle \nu|_{\Gamma_{N}}, \phi \rangle_{\Gamma_{N}} = \langle \mu, \phi \rangle_{\widetilde{H}^{\frac{1}{2}}(\Gamma_{N}.\mathbb{R}^{2})},
\end{equation}
For the sake of brevity, we denote by $\hat{\mu} := \frac{1}{\| \mu \|^{2}} \mu$ the normed function. Therefore by \eqref{MuDef}, we get that $\langle \nu|_{\Gamma_{N}}, \hat{\mu} \rangle_{\Gamma_{N}} = 1$.

Then, for a fixed extension $\tilde{\bf F} \in \widetilde{H}^{\frac{1}{2}}(\Gamma, \mathbb{R}^{2})$ of the Dirichlet data ${\bf f}$, we construct $\tilde{\bf f}$ in the following form
\begin{equation}
\tilde{\bf f} = \tilde{\bf F} - \langle \tilde{\bf F}, \nu \rangle \hat{\mu}.
\end{equation}
Clearly $\tilde{\bf f} \in H^{\frac{1}{2}}(\Gamma,\mathbb{R}^{2})$ since $\hat{\mu} \in \widetilde{H}^{\frac{1}{2}}(\Gamma_{N},\mathbb{R}^{2}) \subset H^{\frac{1}{2}}(\Gamma,\mathbb{R}^{2})$. Taking the restriction to $\Gamma_{D}$, we obtain $
\tilde{\bf f}|_{\Gamma_{D}} = \tilde{\bf F}|_{\Gamma_{D}} - \langle \tilde{\bf F}, \nu \rangle_{_{\Gamma}} \hat{\mu}|_{\Gamma_{D}} = {\bf f}$, since $\hat{\mu}$ has support on $\overline{\Gamma_{N}}$. Finally, the following computation based on the norm of $\hat{\mu}$ yields
\begin{equation}
\langle \tilde{\bf f}, \nu \rangle = \langle \tilde{\bf F}, \nu \rangle - \langle \tilde{\bf F}, \nu\rangle \langle \hat{\mu}, \nu \rangle = 0.
\end{equation}
and implies that $\tilde{\bf f} \in  \widetilde{H}^{\frac{1}{2}}_{\nu}(\Gamma,\mathbb{R}^{2})$.

By the above assumptions the boundary data for the Brinkman system \eqref{Brinkman system} is given by
\begin{equation}
\label{restriction}
\left( {\rm Tr} \ \textbf{u}\right)|_{\Gamma} = \varphi_{N} + \tilde{\bf f}, \quad \left( \partial_{\nu;\alpha} (\textbf{u}, p) \right)|_{\Gamma} = \psi_{D} + \tilde{\textbf{g}}.
\end{equation}
for some $\varphi_{N} \in \widetilde{H}^{\frac{1}{2}}(\Gamma_{N}, \mathbb{R}^{2})$ and $\psi_{D} \in \widetilde{H}^{-\frac{1}{2}}(\Gamma_{D}, \mathbb{R}^{2})$. The condition $\tilde{\bf f} \in  \widetilde{H}^{\frac{1}{2}}_{\nu}(\Gamma,\mathbb{R}^{2})$, the continuity equation and the flux-divergence theorem, imply that the desired density $\varphi_{N}$ satisfies also the orthogonality condition $\langle \varphi_{N}, \nu \rangle = 0$, i.e., $\varphi \in \widetilde{H}^{\frac{1}{2}}_{\nu}(\Gamma, \mathbb{R}^{2})$.

By restricting (\ref{CP1}) to $\Gamma_{D}$ and (\ref{CP2}) to $\Gamma_{N}$ we obtain
the following system of boundary integral equations with the unknowns $\psi_{D} \in \widetilde{H}^{-\frac{1}{2}}(\Gamma_{D}, \mathbb{R}^{2})$ and $\varphi_{N} \in \widetilde{H}_{\nu}^{\frac{1}{2}}(\Gamma_{N}, \mathbb{R}^{2})$
\begin{equation}
\left\{
\begin{array}{l}
\label{Eq1}
\mathcal{V}_{\alpha;\Gamma} \psi_{D} - \textbf{K}_{\alpha;\Gamma} \varphi _{N}  =  f_{1}, \quad x \in \Gamma_{D} \\
\textbf{K}^{*}_{\alpha;\Gamma} \psi _{D}  + \textbf{D}_{\alpha; \Gamma}\varphi_{N}  =  f_{2}, \quad x \in \Gamma_{N}
\end{array}
\right.
\end{equation}
where
\begin{align}
f_{1} =  \frac{1}{2}\tilde{\textbf{f}} + \textbf{K}_{\alpha;\Gamma}\tilde{\textbf{f}} - \mathcal{V}_{\alpha;\Gamma} \tilde{\textbf{g}},  \quad 
f_{2} =   -\textbf{D}_{\alpha;\Gamma} \tilde{\textbf{f}}  + \frac{1}{2}\tilde{\textbf{f}} - \textbf{K}^{*}_{\alpha;\Gamma} \tilde{\textbf{g}}.
\end{align}

\subsection{Variational Formulation for the system of boundary integral equations \textbf{\eqref{Eq1}}}

For simplicity, we introduce the notation $ \mathcal{H} $ for the product space
\begin{equation}
\mathcal{H} := \widetilde{H}^{\frac{1}{2}}(\Gamma_{N}, \mathbb{R}^{2}) \times \widetilde{H}^{-\frac{1}{2}}(\Gamma_{D}, \mathbb{R}^{2}) \subset \widetilde{H}^{\frac{1}{2}}_{\nu}(\Gamma, \mathbb{R}^{2}) \times \widetilde{H}^{-\frac{1}{2}}(\Gamma, \mathbb{R}^{2}).
\end{equation}
Let us define the bilinear form $ a: \mathcal{H} \times \mathcal{H} \to \mathbb{R} $ as
\begin{equation}
\label{bilinear form}
a((\varphi_{N},\psi_{D});(\varphi,\psi)) := \langle \mathcal{V}_{\alpha;\Gamma} \psi_{D}, \psi \rangle - \langle \textbf{K}_{\alpha;\Gamma} \varphi_{N},\psi \rangle + \langle \textbf{K}^{*}_{\alpha;\Gamma} \psi_{D}, \varphi \rangle + \langle \textbf{D}_{\alpha; \Gamma} \varphi_{N}, \varphi \rangle. 
\end{equation}

Then the equivalent variational formulation for the mixed Dirichlet-Neumann boundary value problem for the Brinkman system (\ref{Brinkman system}) is to find $ (\varphi_{N},\psi_{D}) \in \mathcal{H} $ such that the following equation is satisfied (see \cite{K-W} for $ \alpha = 0$ )
\begin{equation}
\label{Var Brinkman system}
a((\varphi_{N},\psi_{D}); (\varphi,\psi)) = l(\varphi,\psi), \qquad \forall
(\varphi, \psi) \in \mathcal{H} 
\end{equation}
where 
\begin{equation}
l(\varphi,\psi) = \langle \frac{1}{2}\tilde{\bf f} + \textbf{K}_{\alpha;\Gamma} \tilde{\textbf{f}} - \mathcal{V}_{\alpha;\Gamma} \tilde{\textbf{g}}, \psi \rangle + \langle -\textbf{D}_{\alpha;\Gamma} \tilde{\textbf{f}}  + \frac{1}{2}\tilde{\bf g} - \textbf{K}^{*}_{\alpha;\Gamma}\tilde{\textbf{g}}, \varphi\rangle.
\end{equation}

In order to analyse the variational problem (\ref{Var Brinkman system}), we study the coerciveness properties of the hypersingular boundary integral operator $\textbf{D}_{\alpha;\Gamma}$ and the single-layer integral operator  $\mathcal{V}_{\alpha;\Gamma}$. To this purpose, we show the following Garding inequalities (see \cite[Lemma 3.9]{K-W} for the Stokes system).

\begin{lemma}
\label{Lemma3} Let $ \mathfrak{D} \subset \mathbb{R}^{2}$ be a bounded Lipschitz domain with connected boundary $ \Gamma $. There exists a compact operator  $\mathcal{C}_{D} : H^{\frac{1}{2}}(\Gamma, \mathbb{R}^{2}) \to $ $ H^{-\frac{1}{2}}(\Gamma, \mathbb{R}^{2})$, such that 
\begin{equation}
\label{EqLemma3}
\langle (\textbf{D}_{\alpha;\Gamma} + \mathcal{C}_{D}) \varphi, \varphi \rangle \geq c_{1} \|\varphi\|^{2}_{H^{\frac{1}{2}}(\Gamma, \mathbb{R}^{2})}, \ \forall \varphi \in H^{\frac{1}{2}}(\Gamma, \mathbb{R}^{2}).
\end{equation}
\end{lemma}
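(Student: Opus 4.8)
The plan is to prove the G\r{a}rding inequality for the hypersingular operator $\mathbf{D}_{\alpha;\Gamma}$ by relating the bilinear form $\langle \mathbf{D}_{\alpha;\Gamma}\varphi,\varphi\rangle$ to a coercive volume bilinear form via the Green formula \eqref{Green formula} applied to the double-layer potential, and then comparing $\mathbf{D}_{\alpha;\Gamma}$ with the corresponding operator for the Stokes system ($\alpha=0$), where the result is already known from \cite[Lemma 3.9]{K-W}. The cleanest route exploits the fact that the difference between the Brinkman fundamental solution $\mathcal{G}^\alpha$ and the Stokes fundamental solution $\mathcal{G}^0$ has a weaker singularity (the functions $A_1,A_2$ differ from their $\alpha=0$ counterparts by smooth terms plus logarithmically milder contributions), so the difference operator $\mathbf{D}_{\alpha;\Gamma}-\mathbf{D}_{0;\Gamma}:H^{\frac12}(\Gamma,\mathbb{R}^2)\to H^{-\frac12}(\Gamma,\mathbb{R}^2)$ is compact. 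Granting this, one sets $\mathcal{C}_D := \mathcal{C}_D^{\mathrm{Stokes}} + (\mathbf{D}_{0;\Gamma}-\mathbf{D}_{\alpha;\Gamma})$, which is a sum of two compact operators, and \eqref{EqLemma3} follows immediately from \cite[Lemma 3.9]{K-W}.

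Alternatively, and perhaps more in the spirit of a self-contained argument, I would work directly with the potential-theoretic identity. For $\varphi \in H^{\frac12}(\Gamma,\mathbb{R}^2)$ let $\mathbf{u}_\pm := \mathbf{W}_{\alpha;\Gamma}\varphi$ and $p_\pm := Q^d_{\alpha;\Gamma}\varphi$ in $\mathfrak{D}_\pm$; these satisfy the homogeneous Brinkman system, so $(\mathbf{u}_\pm,p_\pm;\mathbf{0})\in H^1(\mathfrak{D}_\pm,\mathcal{L}_\alpha)$ after the usual localization for the exterior domain. By \eqref{70aaaa} one has $\partial^\pm_{\nu;\alpha}(\mathbf{u}_\pm,p_\pm) = -\mathbf{D}_{\alpha;\Gamma}\varphi$, and by \eqref{68-s1} the jump of the trace is $\mathrm{Tr}^-\mathbf{u} - \mathrm{Tr}^+\mathbf{u} = \varphi$. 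Feeding these into the Green formula \eqref{Green formula} for $\mathfrak{D}_+$ and for $\mathfrak{D}_-$ and adding, the flux terms telescope and one obtains
\begin{equation}
\langle \mathbf{D}_{\alpha;\Gamma}\varphi,\varphi\rangle = 2\langle \mathbb{E}(\mathbf{u}),\mathbb{E}(\mathbf{u})\rangle_{\mathfrak{D}_+\cup\mathfrak{D}_-} + \alpha\langle \mathbf{u},\mathbf{u}\rangle_{\mathfrak{D}_+\cup\mathfrak{D}_-},
\end{equation}
which is manifestly nonnegative. The task then reduces to showing that this right-hand side controls $\|\varphi\|^2_{H^{\frac12}(\Gamma,\mathbb{R}^2)}$ up to a compact perturbation, using the trace theorem (Lemma \ref{lem 1.5}), Korn's inequality on $\mathfrak{D}_+$ and a bounded subdomain of $\mathfrak{D}_-$ containing $\Gamma$, and the mapping properties \eqref{ss-s2}--\eqref{ds-s2} together with the jump relations to recover $\varphi$ from the two one-sided traces. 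The lower-order/compact terms come from the compact embeddings $H^1\hookrightarrow L^2$ and from the fact that rigid-body motions (the kernel of $\mathbb{E}$) form a finite-dimensional space, which is where a compact correction $\mathcal{C}_D$ is genuinely needed.

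I expect the main obstacle to be the rigorous handling of the exterior domain $\mathfrak{D}_-$: the double-layer potential does not in general lie in $H^1(\mathfrak{D}_-,\mathbb{R}^2)$ globally (decay at infinity and the logarithmic behavior of the 2D pressure kernel $\Lambda^\alpha$ must be watched), so one must either truncate $\mathfrak{D}_-$ to a large ball $B_R\setminus\overline{\mathfrak{D}_+}$ and show the boundary contribution on $\partial B_R$ is negligible or absorbable into the compact part, or invoke weighted Sobolev space versions of the Green identity. The decay estimates for $\mathbf{W}_{\alpha;\Gamma}\varphi$ as $|\mathbf{x}|\to\infty$ follow from the exponential decay of the Bessel functions $K_\varkappa(\sqrt{\alpha}|\mathbf{x}|)$ hidden in $A_1,A_2$ for $\alpha>0$, which actually makes the 2D Brinkman case better behaved than the Stokes case; this should let one pass to the limit $R\to\infty$ cleanly. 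The remaining steps — Korn's inequality, the compact embedding arguments, and quoting the Stokes result \cite[Lemma 3.9]{K-W} to organize the compact operator $\mathcal{C}_D$ — are routine.
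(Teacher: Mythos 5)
Your second, self-contained argument is essentially the paper's proof: the paper likewise forms $\mathbf{u}_0=\mathbf{W}_{\alpha;\Gamma}\varphi$, applies the Green formula \eqref{Green formula} on $\mathfrak{D}_+$ and on the truncated exterior region $\mathfrak{D}_-\cap B_\eta$, adds the two identities using the jump relations \eqref{68-s1} and \eqref{70aaaa}, invokes Korn's inequality, and packages the $L^2$ lower-order terms (via the compact embedding $H^1\hookrightarrow L^2$) together with the $\partial B_\eta$ flux contribution into the compact operator $\mathcal{C}_D$, recovering $\|\varphi\|_{H^{1/2}}$ from the two one-sided traces $(\pm\tfrac12\mathbb{I}+\mathbf{K}_{\alpha;\Gamma})\varphi$. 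Your first suggested route --- writing $\mathcal{C}_D$ as the Stokes compact correction from \cite[Lemma 3.9]{K-W} plus the difference $\mathbf{D}_{0;\Gamma}-\mathbf{D}_{\alpha;\Gamma}$, which is compact because the Brinkman and Stokes kernels differ by weakly singular terms --- is a legitimate shortcut that the paper does not take, but the argument you actually develop coincides with the paper's.
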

\begin{proof}
We follow the main ideas in the proof of \cite[Lemma 3.9]{K-W}.
For any $\varphi \in  H^{\frac{1}{2}}(\Gamma, \mathbb{R}^{2})$, let us consider the double-layer potentials
\begin{equation}
\textbf{u}_{0} = \textbf{W}_{\alpha; \Gamma}\varphi, \quad p_{0} = Q^{d}_{\alpha;\Gamma}\varphi  \qquad x \in \mathbb{R}^{2}\backslash \Gamma.
\end{equation}
Then $(\textbf{u}_{0},p_{0}) \in H^{1}(\mathfrak{D}_{\pm}, \mathbb{R}^{2}) \times L^{2}(\mathfrak{D}_{\pm})$. Moreover, we have the jump relations for the double-layer potential operator (see, e.g., \cite[Theorem 2.1]{Dindos}, \cite[Lemma 3.1]{K-L-W2})
\begin{align}
\label{jump3}
&{\rm{Tr}}^{-}\big(\textbf{W}_{\alpha ,\Gamma}{\varphi}\big) - {\rm{Tr}}^{+}\big(\textbf{W}_{\alpha ,\Gamma}{\varphi}\big) = \varphi, \\
\label{jump4}
&{\partial }_{\nu ;\alpha }^{-}\left(\textbf{W}_{\alpha ;\Gamma}{\varphi},{\mathcal Q}_{\alpha ;\Gamma}^s{\varphi}\right) - \partial_{\nu ;\alpha }^{+}\left(\textbf{W}_{\alpha ,\Gamma }{\varphi},{\mathcal Q}_{\alpha ,\Gamma}^s{\varphi}\right) = - \textbf{D}_{\alpha;\Gamma}\varphi + \textbf{D}_{\alpha;\Gamma}\varphi = 0.
\end{align}
Considering a constant $\eta> 0$, such that the domain $\mathfrak{D}$ lies in the closed ball $B_{\eta}:= \{ x \in \mathbb{R}^{2}: |x| \leq \eta\}$, i.e., $\mathfrak{D} \subset B_{\eta}$, we obtain the following identities based on the Green formula (\ref{Green formula}) for the interior domain respectively for the intersection of the exterior domain with the closed ball $ \mathfrak{D}_{\eta} :=  \mathfrak{D}_{-} \cap B_{\eta} $, i.e.,
\begin{align}
\label{intD_GF}
\langle \partial^{+}_{\nu;\alpha}(\textbf{u}_{0},p_{0}),\rm{Tr}^{+} \ \textbf{u}_{0} \rangle  &=  2 \langle \mathbb{E}(\textbf{u}_{0}), \mathbb{E}(\textbf{u}_{0}) \rangle_{\mathfrak{D}_{+}} + \alpha \langle \textbf{u}_{0}, \textbf{u}_{0} \rangle_{\mathfrak{D}_{+}},  \\
\label{extD-GF}
-\langle \partial^{-}_{\nu;\alpha}(\textbf{u}_{0}, p_{0}),\rm{Tr}^{-} \ \textbf{u}_{0} \rangle &+  \langle \partial^{+}_{\nu;\alpha}(\textbf{u}_{0}, p_{0}),\rm{Tr}^{+} \ \textbf{u}_{0} \rangle_{\partial B_{\eta}}  =  2 \langle \mathbb{E}(\textbf{u}_{0}), \mathbb{E}(\textbf{u}_{0}) \rangle_{\mathfrak{D}_{\eta}} \nonumber \\
&+ \alpha \langle \textbf{u}_{0}, \textbf{u}_{0} \rangle_{\mathfrak{D}_{\eta}} . 
\end{align}

Adding equations \eqref{intD_GF} and \eqref{extD-GF} and applying Korn's inequality (see, e.g., \cite[Lemma 5.4.4]{H-W}) to the right hand side, we obtain
\begin{align}
\label{KornIneq}
\! \langle \textbf{D}_{\alpha;\Gamma}\varphi, \varphi \rangle \! + \! \langle \partial^{+}_{\nu;\alpha}(\textbf{u}_{0}, p_{0}),\rm{Tr}^{+} \ \textbf{u}_{0} \rangle_{\partial B_{\eta}} \! \geq \ &c_{0}(1+\alpha) \|\textbf{u}_{0}\|^{2}_{H^{1}(\mathfrak{D}, \mathbb{R}^{2})} - \|\textbf{u}_{0}\|^{2}_{L^{2}(\mathfrak{D}, \mathbb{R}^{2})} \nonumber \\
+  &c_{0}(1+\alpha)\| \textbf{u}_{0}\|^{2}_{H^{1}(\mathfrak{D}_{\eta},\mathbb{R}^{2})} \!\! - \! \| \textbf{u}_{0}\|^{2}_{L^{2}(\mathfrak{D}_{\eta}, \mathbb{R}^{2})}. 
\end{align}
Let us consider the linear operator defined by 
\begin{align}
\label{cmp3}
\!\!\! \langle \mathcal{C}_{D} u,\varphi  \rangle \! = \!\langle u,\textbf{W}^{*}_{\alpha,\mathfrak{D}_{+}} \textbf{W}_{\alpha,\mathfrak{D}_{+}}\varphi \rangle \! &+\! \langle u ,\textbf{W}^{*}_{\alpha,\mathfrak{D}_{-}} \textbf{W}_{\alpha,\mathfrak{D}_{-}}\varphi \rangle  \! \nonumber \\
&+ \!\langle \partial^{+}_{\nu;\alpha}(\textbf{W}_{\alpha,\mathfrak{D}_{-}}u, Q^{d}_{\alpha,\mathfrak{D}_{-}}u),\rm{Tr}^{+} \ \textbf{W}_{\alpha,\mathfrak{D}_{-}}\varphi \rangle_{\partial B_{\eta}}. 
\end{align}
for every $\varphi \in H^{\frac{1}{2}}(\Gamma,\mathbb{R}^{2}) $. Note that the operator \eqref{cmp3} is well-defined, due to the embedding $ H^{1}(\mathfrak{D}_{+},\mathbb{R}^{2}) \hookrightarrow L^{2}(\mathfrak{D}_{+},\mathbb{R}^{2})$, by which we can define the adjoint double-layer potential $\textbf{W}^{*}_{\alpha,\mathfrak{D}_{+}} : L^{2}(\mathfrak{D}_{+},\mathbb{R}^{2}) \to H^{\frac{1}{2}}(\Gamma,\mathbb{R}^{2})$  as follows
\begin{equation}
\label{cmp33}
\langle \textbf{W}_{\alpha;\mathfrak{D}_{+}} u, \textbf{W}_{\alpha;\mathfrak{D}_{+}} \varphi \rangle = \langle u, \textbf{W}^{*}_{\alpha,\mathfrak{D}_{+}} \textbf{W}_{\alpha;\mathfrak{D}_{+}} \varphi \rangle,
\end{equation}
where the notation $\langle \cdot, \cdot \rangle $ in the left hand side refers to the inner product on the space $L^{2}(\mathfrak{D}_{+},\mathbb{R}^{2})$, while the same notation in the right hand side means the inner product of the space $H^{\frac{1}{2}}(\mathfrak{D}_{+}, \mathbb{R}^{2})$. However, for the same of brevity, we use the same notation in both sides of \eqref{cmp33}. According to the compact embedding $H^{1}(\mathfrak{D}_{+},\mathbb{R}^{2})$ into $L^{2}(\mathfrak{D}_{+},\mathbb{R}^{2})$ and for a well chosen constant $\eta$ such that $\mbox{dist} \ (\Gamma, \partial B_{\eta}) > 0$, we deduce that the operator $\mathcal{C}_{D}$ is compact.

Finally from \eqref{KornIneq} and the continuity of the trace operator, we obtain the following relation
\begin{align}
\label{cmp1}
\!\!\langle (\textbf{D}_{\alpha;\Gamma} +\mathcal{C}_{D}) \varphi, \varphi \rangle \! &\geq c_{1} \Big \{ \| \Big(-\frac{1}{2}\mathbb{I} \! + \! \textbf{K}_{\alpha;\Gamma} \Big) \varphi \|^{2}_{H^{\frac{1}{2}}(\Gamma,\mathbb{R}^{2})} \!\!\! + \|\Big(\frac{1}{2}\mathbb{I} \! + \! \textbf{K}_{\alpha;\Gamma} \Big) \varphi \|^{2}_{H^{\frac{1}{2}}(\Gamma,\mathbb{R}^{2})}\Big\} \nonumber \\
&\geq c_{1} \| \varphi \|^{2}_{H^{\frac{1}{2}}(\Gamma,\mathbb{R}^{2})},
\end{align}
which proves our assertion.

\end{proof}

\begin{lemma}
\label{Lemma2}  Let $ \mathfrak{D} \subset \mathbb{R}^{2}$ be a bounded Lipschitz domain with connected boundary $ \Gamma $. Then there exists a compact operator  $\mathcal{C}_{V} : H^{-\frac{1}{2}}(\Gamma, \mathbb{R}^{2}) \to $ $ H^{\frac{1}{2}}(\Gamma, \mathbb{R}^{2})$ such that 
\begin{equation}
\langle (\mathcal{V}_{\alpha;\Gamma} + C_{V}) \psi, \psi \rangle \geq c_{2} \|\psi\|^{2}_{H^{-\frac{1}{2}}(\Gamma, \mathbb{R}^{2})}, \quad \forall \psi \in H^{-\frac{1}{2}}(\Gamma, \mathbb{R}^{2}).
\end{equation}
\end{lemma}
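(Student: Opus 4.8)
The plan is to adapt the proof of Lemma \ref{Lemma3}, working with single-layer potentials in place of double-layer ones. Given $\psi \in H^{-\frac{1}{2}}(\Gamma,\mathbb{R}^{2})$, I would consider
\[
\textbf{u}_{0} = \textbf{V}_{\alpha;\Gamma}\psi, \qquad p_{0} = Q^{s}_{\alpha;\Gamma}\psi \qquad \text{in } \mathbb{R}^{2}\setminus\Gamma ,
\]
which by Theorem \ref{layer-potential-properties3} satisfy $\mathcal{L}_{\alpha}(\textbf{u}_{0},p_{0})=0$ both in $\mathfrak{D}_{+}$ and in $\mathfrak{D}_{-}$, have a single-valued trace $\mathrm{Tr}^{+}\textbf{u}_{0}=\mathrm{Tr}^{-}\textbf{u}_{0}=\mathcal{V}_{\alpha;\Gamma}\psi$ on $\Gamma$ by \eqref{68}, and whose conormal derivatives obey the jump relation $\partial^{+}_{\nu;\alpha}(\textbf{u}_{0},p_{0}) - \partial^{-}_{\nu;\alpha}(\textbf{u}_{0},p_{0}) = \psi$, following from \eqref{70aaa}. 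Fixing $\eta>0$ with $\mathfrak{D}\subset B_{\eta}$ and $\mathrm{dist}(\Gamma,\partial B_{\eta})>0$, and recalling $\mathfrak{D}_{\eta}=\mathfrak{D}_{-}\cap B_{\eta}$, I would write the Green formula \eqref{Green formula} (with $\textbf{f}=0$ and $\mathrm{div}\,\textbf{u}_{0}=0$) on $\mathfrak{D}_{+}$ and on $\mathfrak{D}_{\eta}$, add the two identities, and use the trace and conormal jump relations to cancel the contributions on $\Gamma$. This yields
\[
\langle \mathcal{V}_{\alpha;\Gamma}\psi,\psi\rangle + \langle \partial_{\nu;\alpha}(\textbf{u}_{0},p_{0}),\mathrm{Tr}\,\textbf{u}_{0}\rangle_{\partial B_{\eta}} = 2\|\mathbb{E}(\textbf{u}_{0})\|^{2}_{\mathfrak{D}_{+}} + 2\|\mathbb{E}(\textbf{u}_{0})\|^{2}_{\mathfrak{D}_{\eta}} + \alpha\big(\|\textbf{u}_{0}\|^{2}_{L^{2}(\mathfrak{D}_{+})}+\|\textbf{u}_{0}\|^{2}_{L^{2}(\mathfrak{D}_{\eta})}\big),
\]
and Korn's inequality (\cite[Lemma 5.4.4]{H-W}), applied exactly as in \eqref{KornIneq}, bounds the right-hand side from below by $c_{0}\big(\|\textbf{u}_{0}\|^{2}_{H^{1}(\mathfrak{D}_{+})}+\|\textbf{u}_{0}\|^{2}_{H^{1}(\mathfrak{D}_{\eta})}\big) - C_{0}\big(\|\textbf{u}_{0}\|^{2}_{L^{2}(\mathfrak{D}_{+})}+\|\textbf{u}_{0}\|^{2}_{L^{2}(\mathfrak{D}_{\eta})}\big)$.

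Next I would estimate $\|\psi\|_{H^{-\frac{1}{2}}(\Gamma,\mathbb{R}^{2})}$ from above by the same $H^{1}$-norms. Using $\psi = \partial^{+}_{\nu;\alpha}(\textbf{u}_{0},p_{0}) - \partial^{-}_{\nu;\alpha}(\textbf{u}_{0},p_{0})$ together with the continuity of the conormal derivative operator from Lemma \ref{lem 1.6} (for the exterior trace on $\Gamma$ one chooses a right inverse $\mathrm{Tr}^{-1}$ supported near $\Gamma$, so that only the bounded piece $\mathfrak{D}_{\eta}$ enters), one gets $\|\psi\|_{H^{-\frac{1}{2}}(\Gamma)} \le C\big(\|\textbf{u}_{0}\|_{H^{1}(\mathfrak{D}_{+})}+\|p_{0}\|_{L^{2}(\mathfrak{D}_{+})}+\|\textbf{u}_{0}\|_{H^{1}(\mathfrak{D}_{\eta})}+\|p_{0}\|_{L^{2}(\mathfrak{D}_{\eta})}\big)$. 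To remove the pressure, since $\nabla p_{0} = \Delta\textbf{u}_{0}-\alpha\textbf{u}_{0}$, the standard estimate $\|p_{0}-m_{\Omega}(p_{0})\|_{L^{2}(\Omega)} \le C\|\nabla p_{0}\|_{H^{-1}(\Omega)} \le C(1+\alpha)\|\textbf{u}_{0}\|_{H^{1}(\Omega)}$ holds for $\Omega\in\{\mathfrak{D}_{+},\mathfrak{D}_{\eta}\}$; the remaining mean values $m_{\mathfrak{D}_{+}}(p_{0})$, $m_{\mathfrak{D}_{\eta}}(p_{0})$ are bounded linear functionals of $\psi$, hence contribute only finite-rank (so compact) terms.

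Combining the two bounds, I would collect into a single compact operator $\mathcal{C}_{V}:H^{-\frac{1}{2}}(\Gamma,\mathbb{R}^{2})\to H^{\frac{1}{2}}(\Gamma,\mathbb{R}^{2})$ the three remaining quadratic contributions, in the same spirit as \eqref{cmp3}: the $L^{2}(\mathfrak{D}_{+})$- and $L^{2}(\mathfrak{D}_{\eta})$-terms, written as $\textbf{V}^{*}_{\alpha,\mathfrak{D}_{+}}\textbf{V}_{\alpha,\mathfrak{D}_{+}}$ and $\textbf{V}^{*}_{\alpha,\mathfrak{D}_{\eta}}\textbf{V}_{\alpha,\mathfrak{D}_{\eta}}$, compact by the compact embedding $H^{1}\hookrightarrow L^{2}$ as in \eqref{cmp33}; the boundary term on $\partial B_{\eta}$, compact because $\textbf{V}_{\alpha;\Gamma}\psi$ and $Q^{s}_{\alpha;\Gamma}\psi$ are real-analytic on a neighbourhood of $\partial B_{\eta}$, which lies at positive distance from $\Gamma$; and the rank-one operators stemming from $m_{\mathfrak{D}_{+}}(p_{0})$ and $m_{\mathfrak{D}_{\eta}}(p_{0})$. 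This gives $\langle(\mathcal{V}_{\alpha;\Gamma}+\mathcal{C}_{V})\psi,\psi\rangle \ge c_{2}\|\psi\|^{2}_{H^{-\frac{1}{2}}(\Gamma,\mathbb{R}^{2})}$.

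The main obstacle, relative to Lemma \ref{Lemma3}, is handling the pressure: unlike the velocity, $p_{0}$ is controlled in $L^{2}$ only modulo an additive constant, and in the unbounded exterior it decays too slowly (like $|x|^{-1}$) to be globally square-integrable, which is precisely why the truncation to $\mathfrak{D}_{\eta}$ is needed and why the bookkeeping of the additive constants as finite-rank perturbations has to be done carefully. The other conceptual point is the lower bound $\|\psi\|_{H^{-\frac{1}{2}}(\Gamma)}\lesssim \|\textbf{u}_{0}\|_{H^{1}(\mathfrak{D}_{+})}+\|\textbf{u}_{0}\|_{H^{1}(\mathfrak{D}_{\eta})}$ up to compact terms, which rests on the conormal jump relation and the mapping properties of Lemma \ref{lem 1.6}; once these are in place, the remainder is a routine adaptation of the proof of Lemma \ref{Lemma3}.
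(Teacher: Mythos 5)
Your proposal is correct and follows essentially the same route as the paper: single-layer potentials, the trace and conormal-jump relations, Green's formula on $\mathfrak{D}_{+}$ and $\mathfrak{D}_{\eta}=\mathfrak{D}_{-}\cap B_{\eta}$, Korn's inequality, and a compact operator $\mathcal{C}_{V}$ assembled from the $L^{2}$-terms and the $\partial B_{\eta}$-term as in \eqref{cmpV}. You in fact supply more detail than the paper (which only says the result follows "with similar arguments as for Lemma \ref{Lemma3}"), notably the estimate of $\|\psi\|_{H^{-1/2}}$ via the conormal jump and the treatment of the pressure modulo additive constants as finite-rank perturbations.
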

\begin{proof}
For any $\psi \in  H^{-\frac{1}{2}}(\Gamma, \mathbb{R}^{2})$, let us consider the single-layer potential operators
\begin{equation}
\textbf{u}_{0} = \textbf{V}_{\alpha; \Gamma}\psi, \ p_{0} = Q^{s}_{\alpha;\Gamma}\psi, \qquad x \in \mathbb{R}^{2}\backslash \Gamma.
\end{equation}
Then $(\textbf{u}_{0},p_{0}) \in H^{1}(\mathfrak{D}_{\pm}, \mathbb{R}^{2}) \times L^{2}(\mathfrak{D}_{\pm})$. The jump relations for this particular fields yield
\begin{equation}
\label{jump1}
{\rm{Tr}}^{+}\big({\textbf{V}}_{\alpha; \Gamma}{\psi}\big) - {\rm{Tr}}^{-}\big({\textbf{V}}_{\alpha; \Gamma}{\psi}\big) = 0,
\end{equation}
\begin{equation}
\label{jump2}
{\partial }_{\nu ;\alpha }^{+}\left({\textbf{V}}_{\alpha ;\Gamma}{\psi},{\mathcal Q}_{\alpha ;\Gamma}^s{\psi}\right) - \partial_{\nu ;\alpha }^{-}\left({\textbf{V}}_{\alpha ;\Gamma }{\psi},{\mathcal Q}_{\alpha ;\Gamma}^s{\psi}\right) = \psi.
\end{equation}
As in Lemma \ref{Lemma3}, we consider again a constant $\eta$ and the corresponding closed ball $ B_{\eta} := \{ x \in \mathbb{R}^{2}: |x| \leq \eta\}$, such that $\mathfrak{D} \subset B_{\eta}$ and let $\mathfrak{D}_{\eta}$ be the intersection of the exterior domain with the closed ball, i.e., $ \mathfrak{D}_{\eta} :=  \mathfrak{D}_{-} \cap B_{\eta} $. By the jump relations (\ref{jump1}) and (\ref{jump2}) and the Green formula (\ref{Green formula}), we can write
\begin{align}
\!\!\! \langle \mathcal{V}_{\alpha; \Gamma} \psi ,\psi \rangle + \langle \partial^{+}_{\nu;\alpha}(\textbf{u}_{0}, p_{0}),\rm{Tr}^{+} \ \textbf{u}_{0} \rangle_{\partial B_{\eta}}  \!&= \! 2 \langle \mathbb{E}( \textbf{u}_{0}) ,\mathbb{E}( \textbf{u}_{0}) \rangle_{\mathfrak{D}_{+} }\! + \! 2 \langle \mathbb{E}( \textbf{u}_{0}) ,\mathbb{E}( \textbf{u}_{0}) \rangle_{ \mathfrak{D}_{\eta} } \nonumber \\
&+ \alpha \langle \textbf{u}_{0},  \textbf{u}_{0} \rangle_{\mathfrak{D}_{+}} + \alpha \langle \textbf{u}_{0},  \textbf{u}_{0} \rangle_{\mathfrak{D}_{\eta}}. \nonumber
\end{align}
Then the result follows with similar arguments as for Lemma \ref{Lemma3}, where the compact operator is given by
\begin{align}
\label{cmpV}
\!\!\! \langle \mathcal{C}_{V} u,\psi  \rangle &= \langle u, \textbf{V}^{*}_{\alpha;\mathfrak{D}_{+}}\textbf{V}_{\alpha;\mathfrak{D}_{+}}\psi \rangle + \langle u, \textbf{V}^{*}_{\alpha;\mathfrak{D}_{-}}\textbf{V}_{\alpha;\mathfrak{D}_{-}}\psi \rangle  \nonumber \\
&+\langle \partial^{+}_{\nu;\alpha}(\textbf{V}_{\alpha,\mathfrak{D}_{-}}u, Q^{s}_{\alpha,\mathfrak{D}_{-}}u),\rm{Tr}^{+} \ \textbf{V}_{\alpha,\mathfrak{D}_{-}}\varphi \rangle_{\partial B_{\eta}}. 
\end{align}
\end{proof}

Moreover, we need the following positivity lemma for the single-layer and the hypersingular potential operators (see, e.g., \cite[Theorem 3.8]{K-W}).
\begin{lemma}
\label{positiveD}
Let $ \mathfrak{D} \subset \mathbb{R}^{2}$ be a bounded Lipschitz domain with connected boundary $ \Gamma $ as in definition \ref{Domain}. There exists two positive constants $c_{V}>0$  and $c_{D}>0$, such that 
\begin{equation}
\label{PositivV}
\langle \mathcal{V}_{\alpha;\Gamma} \psi, \psi \rangle  \geq c_{V} \|\psi \|^{2}_{\widetilde{H}^{-\frac{1}{2}}(\Gamma_{D},\mathbb{R}^{2})}, \quad \forall \psi \in \widetilde{H}^{-\frac{1}{2}}(\Gamma_{D},\mathbb{R}^{2})
\end{equation}
and
\begin{equation}
\label{PositivD}
\langle \textbf{D}_{\alpha;\Gamma} \varphi, \varphi \rangle  \geq c_{D} \| \varphi \|^{2}_{H^{\frac{1}{2}}_{\nu}(\Gamma,\mathbb{R}^{2})}, \quad \forall \varphi \in H^{\frac{1}{2}}_{\nu}(\Gamma,\mathbb{R}^{2}).
\end{equation}
\end{lemma}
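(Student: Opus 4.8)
The plan is to exploit the Green formula \eqref{Green formula} together with the jump relations for the single-layer and double-layer potentials, in the same spirit as the G\aa rding estimates in Lemmas \ref{Lemma3} and \ref{Lemma2}, but now using the \emph{geometric restriction} on the domain from Definition \ref{Domain} (the creased/Lipschitz decomposition with $\Gamma_D$ and $\Gamma_N$ meeting along $\Lambda$) to upgrade the estimates from ``coercive up to a compact perturbation'' to ``strictly positive'' on the relevant subspaces. For \eqref{PositivV}, I would take $\psi \in \widetilde{H}^{-\frac12}(\Gamma_D,\mathbb{R}^2)$, set $\mathbf{u}_0 = \mathbf{V}_{\alpha;\Gamma}\psi$, $p_0 = Q^s_{\alpha;\Gamma}\psi$ on $\mathbb{R}^2\setminus\Gamma$, and write the interior Green identity \eqref{Green formula} on $\mathfrak{D}_+$ and the exterior one on $\mathfrak{D}_\eta=\mathfrak{D}_-\cap B_\eta$. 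Adding these and using the jump relations \eqref{jump1}--\eqref{jump2} produces
\begin{equation*}
\langle \mathcal{V}_{\alpha;\Gamma}\psi,\psi\rangle + \langle \partial^+_{\nu;\alpha}(\mathbf{u}_0,p_0),\mathrm{Tr}^+\mathbf{u}_0\rangle_{\partial B_\eta} = 2\langle \mathbb{E}(\mathbf{u}_0),\mathbb{E}(\mathbf{u}_0)\rangle_{\mathfrak{D}_+\cup\mathfrak{D}_\eta} + \alpha\langle \mathbf{u}_0,\mathbf{u}_0\rangle_{\mathfrak{D}_+\cup\mathfrak{D}_\eta}.
\end{equation*}
The right-hand side is nonnegative; to get a strictly positive lower bound in terms of $\|\psi\|_{\widetilde{H}^{-1/2}(\Gamma_D)}$, I would let $\eta\to\infty$ so that the spurious boundary term on $\partial B_\eta$ vanishes (using the decay of the Brinkman fundamental solution, which for $\alpha>0$ decays exponentially), leaving $\langle\mathcal{V}_{\alpha;\Gamma}\psi,\psi\rangle$ bounded below by the full-space Dirichlet-type energy of $\mathbf{u}_0$. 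Since $\mathrm{supp}\,\psi\subset\overline{\Gamma_D}$, the jump of the conormal derivative of $\mathbf{u}_0$ across $\Gamma$ equals $\psi$ and is supported on $\Gamma_D$; combining the energy estimate with the mapping properties of the single-layer potential (injectivity of $\mathcal{V}_{\alpha;\Gamma}$ on $\widetilde{H}^{-1/2}(\Gamma_D,\mathbb{R}^2)$, which follows from uniqueness for the Brinkman Dirichlet problem together with the fact that the single-layer density recovers the conormal jump) and a standard closed-graph/Poincar\'e argument gives the coercivity constant $c_V>0$.

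For \eqref{PositivD}, the argument is parallel: for $\varphi \in H^{\frac12}_\nu(\Gamma,\mathbb{R}^2)$ put $\mathbf{u}_0 = \mathbf{W}_{\alpha;\Gamma}\varphi$, $p_0 = Q^d_{\alpha;\Gamma}\varphi$, use the jump relations \eqref{jump3}--\eqref{jump4} (so that the conormal derivative is continuous across $\Gamma$ and the trace jump equals $\varphi$), and add the interior and exterior Green identities to obtain
\begin{equation*}
\langle \mathbf{D}_{\alpha;\Gamma}\varphi,\varphi\rangle + (\text{term on }\partial B_\eta) = 2\langle \mathbb{E}(\mathbf{u}_0),\mathbb{E}(\mathbf{u}_0)\rangle_{\mathfrak{D}_+\cup\mathfrak{D}_\eta} + \alpha\langle\mathbf{u}_0,\mathbf{u}_0\rangle_{\mathfrak{D}_+\cup\mathfrak{D}_\eta}.
\end{equation*}
Letting $\eta\to\infty$ kills the $\partial B_\eta$ term, so $\langle\mathbf{D}_{\alpha;\Gamma}\varphi,\varphi\rangle$ is bounded below by the full-space deformation energy of $\mathbf{u}_0$. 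The constraint $\varphi\in H^{\frac12}_\nu(\Gamma,\mathbb{R}^2)$, i.e. $\langle\varphi,\nu\rangle=0$, is exactly what is needed: it guarantees that the double-layer potential with density $\varphi$ does not contain the rigid/constant modes that lie in the kernel of $\mathbf{D}_{\alpha;\Gamma}$ (for $\alpha=0$ the kernel consists of such modes; for $\alpha>0$ the zeroth-order term $\alpha\langle\mathbf{u}_0,\mathbf{u}_0\rangle$ already removes constants, but the orthogonality condition is the clean way to handle the general statement and to match the decomposition in Definition \ref{Domain}). Injectivity of $\mathbf{D}_{\alpha;\Gamma}$ on $H^{\frac12}_\nu(\Gamma,\mathbb{R}^2)$ then follows, and combining the energy lower bound with Korn's inequality on $\mathfrak{D}_+$ and $\mathfrak{D}_\eta$ (as in Lemma \ref{Lemma3}) and a compactness argument to absorb the lower-order $L^2$ contributions yields the constant $c_D>0$.

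The main obstacle I anticipate is making the ``$\eta\to\infty$'' step fully rigorous, i.e. controlling the boundary term $\langle \partial^+_{\nu;\alpha}(\mathbf{u}_0,p_0),\mathrm{Tr}^+\mathbf{u}_0\rangle_{\partial B_\eta}$ and showing it tends to zero: this requires precise decay estimates at infinity for the Brinkman single- and double-layer potentials and their conormal derivatives (the exponential decay of $K_\varkappa(\sqrt{\alpha}|x|)$ in \eqref{A1A2} is the key), together with the fact that the densities have compact support on $\Gamma$. An alternative route that avoids the exterior-domain limit altogether is to argue purely on the bounded domain: use only the interior Green identity \eqref{Green formula} with $\mathbf{f}=0$, $2\langle\mathbb{E}(\mathbf{u}_0),\mathbb{E}(\mathbf{u}_0)\rangle_{\mathfrak{D}_+} + \alpha\langle\mathbf{u}_0,\mathbf{u}_0\rangle_{\mathfrak{D}_+} = \langle\partial^+_{\nu;\alpha}(\mathbf{u}_0,p_0),\mathrm{Tr}^+\mathbf{u}_0\rangle$, express the right side via the jump relations in terms of $\langle\mathcal{V}_{\alpha;\Gamma}\psi,\psi\rangle$ (resp. $\langle\mathbf{D}_{\alpha;\Gamma}\varphi,\varphi\rangle$) plus a term involving $\mathbf{K}^*_{\alpha;\Gamma}$ (resp. $\mathbf{K}_{\alpha;\Gamma}$), then invoke Korn's inequality and the injectivity of the operators on the constrained spaces. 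Either way, the essential ingredients are the Green formula \eqref{Green formula}, the jump relations, Korn's inequality, and a Lax--Milgram/closed-graph bootstrap from ``$\geq 0$'' to ``$\geq c\|\cdot\|^2$'' using injectivity on the subspaces adapted to the dissection $\Gamma = \Gamma_D\cup\Lambda\cup\Gamma_N$.
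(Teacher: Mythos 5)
Your plan is essentially the paper's own proof: single-/double-layer potentials, interior plus exterior Green identities on $\mathfrak{D}_+$ and $\mathfrak{D}_-\cap B_\eta$, the jump relations, decay at infinity to kill the $\partial B_\eta$ term, nonnegativity of the resulting energy, injectivity on the constrained subspaces (where $\mathrm{supp}\,\psi\subset\overline{\Gamma_D}$ and $\langle\varphi,\nu\rangle=0$ enter), and the standard G\aa rding-plus-injectivity upgrade to coercivity. One small correction: the Brinkman layer potentials do not decay exponentially (the algebraic $1/z^2$ parts of $A_1,A_2$ and the Stokes-type pressure survive); the paper instead uses the algebraic asymptotics $\mathbf{W}_\alpha\varphi=\mathcal{O}(|x|^{-2})$, $\nabla\mathbf{W}_\alpha\varphi,\,Q^d_\alpha\varphi=\mathcal{O}(|x|^{-1})$ valid for $\varphi\in H^{1/2}_\nu$, which still suffice to make the $\partial B_\eta$ term vanish.
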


\begin{proof}
Let $(\textbf{u}_{0}, p_{0}) := (\textbf{W}_{\alpha; \Gamma}\varphi, Q^{d}_{\alpha;\Gamma} \varphi)$ be the double-layer velocity and pressure potentials with a density $\varphi \in H^{\frac{1}{2}}_{\nu}(\Gamma, \mathbb{R}^{2})$.
Note that for $\varphi \in H^{\frac{1}{2}}_{\nu}(\Gamma,\mathbb{R}^{2})$, the asymptotic behavior of the double-layer velocity operator $\textbf{W}_{\alpha;\Gamma}$, the gradient of the double-layer potential operator $\nabla \textbf{W}_{\alpha}$ and the double-layer pressure potential operator $Q^{d}_{\alpha}$ are the following
(see, e.g., \cite[3.12]{kohrpotanal1}, \cite[Lemma 3.7.3]{KohrPop}, \cite[Lemma 2.12]{Varnhorn}) 
\begin{equation}
\label{Asymptotic1}
\left(\textbf{W}_{\alpha} \varphi \right)({\bf x}) = \mathcal{O}(|{\bf x}|^{-2}), \ (\nabla \textbf{W}_{\alpha} \varphi)({\bf x}) = \mathcal{O}(|{\bf x}|^{-1}), \ (Q^{d}_{\alpha} \varphi)({\bf x}) =  \mathcal{O}(|{\bf x}|^{-1}).
\end{equation}

Hence, for $\varphi \in H^{\frac{1}{2}}_{\nu}(\Gamma,\mathbb{R}^{2})$, we deduce that there exists a constant $M_{1}>0$ such that
\begin{align}
\label{ballInfinity2}
\langle {\bf t}^{+}_{\alpha}(\textbf{u}_{0}, \pi_{0}),\gamma^{+} \textbf{u}_{0} \rangle_{\partial B_{\eta}} &\leq \int_{\partial B_{\eta}} |  {\bf t}^{+}_{\alpha}(\textbf{u}_{0}, \pi_{0})||\gamma^{+} \textbf{u}_{0}| d\sigma_{\eta} \nonumber \\
&= \frac{1}{\eta^{3}} \int_{0}^{2\pi} \eta |  {\bf t}^{+}_{\alpha}(\textbf{u}_{0}, \pi_{0})| \eta^{2}|\gamma^{+} \ \textbf{u}_{0}| \eta d\theta \nonumber  \\
&\leq  \frac{1}{\eta^{2}} M_{1} \to 0, \quad \mbox{as } |{\bf x}| \to \infty.
\end{align}

This implies, by adding \eqref{intD_GF} and \eqref{extD-GF} that
\begin{equation}
\langle \textbf{D}_{\alpha;\Gamma} \varphi, \varphi \rangle  = 2 \langle \mathbb{E}( \textbf{u}_{0}) ,\mathbb{E}(\textbf{u}_{0}) \rangle_{\mathfrak{D}_{+}} + 2 \langle \mathbb{E}( \textbf{u}_{0}) ,\mathbb{E}(\textbf{u}_{0}) \rangle_{\mathfrak{D}_{-}} + \alpha \langle  \textbf{u}_{0}, \textbf{u}_{0} \rangle_{\mathfrak{D}_{+}}  + \alpha \langle  \textbf{u}_{0}, \textbf{u}_{0} \rangle_{\mathfrak{D}_{-}},
\end{equation}
which means that $\textbf{D}_{\alpha;\Gamma} $ is non-negative and we show that the hypersingular potential operator is positive for $0 \ne \varphi \in H^{\frac{1}{2}}(\Gamma, \mathbb{R}_{2})$.
To this purpose, let us assume that 
\begin{equation}
\langle \textbf{D}_{\alpha;\Gamma} \varphi, \varphi \rangle  = 0
\end{equation}
which gives the relations $\langle \mathbb{E}(\textbf{u}_{0}) ,\mathbb{E}(\textbf{u}_{0}) \rangle_{\mathfrak{D}_{\pm}} = 0 \ \mbox{and } \ \langle \textbf{u}_{0}, \textbf{u}_{0} \rangle_{\mathfrak{D}_{\pm}} = 0$.

Therefore, $\textbf{u}_{0} = 0 $ in $\mathfrak{D}_{\pm}$ and the jump relation $\rm{Tr}^{-}\textbf{u}_{0} - \rm{Tr}^{+}\textbf{u}_{0} = \varphi$, implies $\varphi = 0$. Consequently, the relation (\ref{PositivD}) holds in addition to Lemma \ref{Lemma3}, which implies that the operator $\textbf{D}_{\alpha;\Gamma}$ is $H^{\frac{1}{2}}(\Gamma,\mathbb{R}^{2})$-elliptic (cf., e.g, \cite[$\S$6.2 and $\S$6.11]{T}, \cite[Lemma 5.2.5]{H-W}; see also \cite[Theorem 3.8]{K-W} in the case of the Stokes system). 

Now let us show the estimate \eqref{PositivD}. To this end, let $(\textbf{u}_{0}, p_{0}) := (\textbf{V}_{\alpha; \Gamma}\psi, Q^{s}_{\alpha;\Gamma}\psi )$ be the single-layer potentials with the density $\psi \in H^{-\frac{1}{2}}(\Gamma,\mathbb{R}^{2})$. The behavior at infinity of the single-layer potential are the following (see, e.g., \cite[3.12]{kohrpotanal1},  \cite[Lemma 3.7.3]{KohrPop}, \cite[Lemma 2.12]{Varnhorn})
\begin{equation}
\left( \textbf{V}_{\alpha;\Gamma} \psi\right)({\bf x}) = \mathcal{O}(|{\bf x}|^{-2}), \ \ \left( \nabla\textbf{V}_{\alpha;\Gamma} \psi\right)({\bf x}) = \mathcal{O}(|{\bf x}|^{-1}), \ \ \left( Q^{s}_{\alpha;\Gamma} \psi\right)({\bf x}) = \mathcal{O}(|{\bf x}|^{-1}),
\end{equation}
which imply by similar arguments as in \eqref{ballInfinity2} that  $ \langle \partial^{+}_{\nu;\alpha}(\textbf{u}_{0}, p_{0}),\rm{Tr}^{+} \ \textbf{u}_{0} \rangle_{\partial B_{\eta}} \to 0$ as $\eta \to \infty$. Hence, we obtain
\begin{equation}
\label{VDD}
\langle \mathcal{V}_{\alpha;\Gamma} \psi, \psi \rangle \! =\! 2 \langle \mathbb{E}( \textbf{u}_{0}) ,\mathbb{E}(\textbf{u}_{0}) \rangle_{\mathfrak{D}_{+}} + 2 \langle \mathbb{E}( \textbf{u}_{0}) ,\mathbb{E}(\textbf{u}_{0}) \rangle_{\mathfrak{D}_{-}} + \alpha \langle  \textbf{u}_{0}, \textbf{u}_{0} \rangle_{\mathfrak{D}_{+}}  + \alpha \langle  \textbf{u}_{0}, \textbf{u}_{0} \rangle_{\mathfrak{D}_{-}}.
\end{equation}
The relation \eqref{VDD} implies that $\langle \mathcal{V}_{\alpha;\Gamma} \psi, \psi \rangle \geq 0$ for all $\psi \in H^{-\frac{1}{2}}(\Gamma,\mathbb{R}^{2})$, in particular $\psi \in \widetilde{H}^{-\frac{1}{2}}(\Gamma_{D},\mathbb{R}^{2})$. Let now $\psi \in \widetilde{H}^{-\frac{1}{2}}(\Gamma_{D},\mathbb{R}^{2})$, such that $\langle \mathcal{V}_{\alpha;\Gamma} \psi, \psi \rangle = 0$. Then ${\bf u}_{0} = 0$ on $\mathfrak{D}_{+} \cup \mathfrak{D}_{-}$ by \eqref{VDD}. Since $({\bf u}_{0},p_{0})$ is a solution of the Brinkman system, there exists two constants $c_{+}, c_{-}$, such that $p_{ 0} = c_{\pm}$ on $\mathfrak{D}_{\pm}$. Since $\psi = 0$ on $\Gamma_{N}$ and meas($\Gamma_{N}$) $>$ 0, we infer that $(c_{-} - c_{+}) = 0$. Thus \eqref{PositivD} hold by $\S$6.2 and $\S$6.11 in \cite{T} and implies, together with Lemma \ref{Lemma2}, that the operator $\mathcal{V}_{\alpha;\Gamma}$ is $\widetilde{H}^{-\frac{1}{2}}(\Gamma_{D},\mathbb{R}^{2})$-elliptic.
\end{proof}

\begin{remark} 
\item[$\ \ (i)$] The hypersingular potential operator for the Brinkman system $\textbf{D}_{\alpha;\Gamma}$ is coercive on the subspace $H^{\frac{1}{2}}_{\nu}(\Gamma, \mathbb{R}^{2})$, in particular also on the space $\widetilde{H}^{\frac{1}{2}}(\Gamma_{N}, \mathbb{R}^{2})$ (see \cite[Theorem 3.8]{K-W} in the case of the Stokes system).
\item[$ \ \ (ii) \!$] The single-layer integral operator $\mathcal{V}_{\alpha;\Gamma}$ is coercive only on the subspace \\ $\widetilde{H}^{-\frac{1}{2}}(\Gamma_{D}, \mathbb{R}^{2})$, although it satisfies a Garding inequality for the Sobolev space defined on the entire $\Gamma$.
\end{remark}

\begin{theorem}
\label{Th2.3}
Let $ \mathfrak{D} \subset \mathbb{R}^{2}$ be a bounded Lipschitz domain with connected boundary $ \Gamma $ as in definition \ref{Domain}. Then the variational problem \eqref{Var Brinkman system} has a unique solution.
\end{theorem}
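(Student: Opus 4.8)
The plan is to recognise the variational problem \eqref{Var Brinkman system} as an instance of the Lax--Milgram lemma on the Hilbert space $\mathcal{H}$, whose first factor is the $\nu$-orthogonal space $\widetilde{H}^{\frac{1}{2}}_{\nu}(\Gamma_{N},\mathbb{R}^{2})$, so that every $(\varphi_{N},\psi_{D})\in\mathcal{H}$ satisfies $\langle\varphi_{N},\nu\rangle=0$ and has $\psi_{D}$ supported in $\overline{\Gamma_{D}}$. First I would record that $\mathcal{H}$ is genuinely a Hilbert space: $\widetilde{H}^{-\frac{1}{2}}(\Gamma_{D},\mathbb{R}^{2})$ is a closed subspace of $H^{-\frac{1}{2}}(\Gamma,\mathbb{R}^{2})$, and $\widetilde{H}^{\frac{1}{2}}_{\nu}(\Gamma_{N},\mathbb{R}^{2})$ is the kernel of the continuous functional $\textbf{f}\mapsto\langle\textbf{f},\nu\rangle$ on $\widetilde{H}^{\frac{1}{2}}(\Gamma_{N},\mathbb{R}^{2})$, hence closed. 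It then remains to check three things: boundedness of $a$, boundedness of $l$, and $\mathcal{H}$-coercivity of $a$.

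Boundedness of $a$ and of $l$ is routine and I would only sketch it: the operators $\mathcal{V}_{\alpha;\Gamma}$, $\textbf{K}_{\alpha;\Gamma}$, $\textbf{K}^{*}_{\alpha;\Gamma}$, $\textbf{D}_{\alpha;\Gamma}$ are linear and continuous between the Sobolev spaces on $\Gamma$ listed in Theorem~\ref{layer-potential-properties3}, the restriction maps to $\Gamma_{D}$ and $\Gamma_{N}$ and the inclusions $\widetilde{H}^{\pm\frac{1}{2}}(S,\mathbb{R}^{2})\hookrightarrow H^{\pm\frac{1}{2}}(\Gamma,\mathbb{R}^{2})$ are bounded, and the duality pairings on $\Gamma$ are continuous. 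Since $l$ involves the fixed extensions $\tilde{\textbf{f}}\in\widetilde{H}^{\frac{1}{2}}_{\nu}(\Gamma,\mathbb{R}^{2})$ and $\tilde{\textbf{g}}\in H^{-\frac{1}{2}}(\Gamma,\mathbb{R}^{2})$ only through these bounded operators, it is a bounded linear functional on $\mathcal{H}$. Hence $|a((\varphi_{N},\psi_{D});(\varphi,\psi))|\le C\|(\varphi_{N},\psi_{D})\|_{\mathcal{H}}\|(\varphi,\psi)\|_{\mathcal{H}}$ and $|l(\varphi,\psi)|\le C\|(\varphi,\psi)\|_{\mathcal{H}}$.

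The heart of the argument is coercivity, and here the structure of $a$ does the work. Testing \eqref{bilinear form} against the diagonal, i.e. putting $(\varphi,\psi)=(\varphi_{N},\psi_{D})$, the contributions of $\textbf{K}_{\alpha;\Gamma}$ and of $\textbf{K}^{*}_{\alpha;\Gamma}$ cancel, because $\textbf{K}^{*}_{\alpha;\Gamma}$ is the transpose of $\textbf{K}_{\alpha;\Gamma}$ and hence $\langle\textbf{K}^{*}_{\alpha;\Gamma}\psi_{D},\varphi_{N}\rangle=\langle\textbf{K}_{\alpha;\Gamma}\varphi_{N},\psi_{D}\rangle$, leaving
\[
a\big((\varphi_{N},\psi_{D});(\varphi_{N},\psi_{D})\big)=\langle\mathcal{V}_{\alpha;\Gamma}\psi_{D},\psi_{D}\rangle+\langle\textbf{D}_{\alpha;\Gamma}\varphi_{N},\varphi_{N}\rangle .
\]
Now I would invoke Lemma~\ref{positiveD}: since $\psi_{D}\in\widetilde{H}^{-\frac{1}{2}}(\Gamma_{D},\mathbb{R}^{2})$ and $\varphi_{N}\in\widetilde{H}^{\frac{1}{2}}_{\nu}(\Gamma_{N},\mathbb{R}^{2})\subset H^{\frac{1}{2}}_{\nu}(\Gamma,\mathbb{R}^{2})$, the estimates \eqref{PositivV} and \eqref{PositivD} apply and yield
\[
a\big((\varphi_{N},\psi_{D});(\varphi_{N},\psi_{D})\big)\ge c_{V}\|\psi_{D}\|^{2}_{\widetilde{H}^{-\frac{1}{2}}(\Gamma_{D},\mathbb{R}^{2})}+c_{D}\|\varphi_{N}\|^{2}_{H^{\frac{1}{2}}_{\nu}(\Gamma,\mathbb{R}^{2})}\ge c\,\|(\varphi_{N},\psi_{D})\|^{2}_{\mathcal{H}},
\]
with $c=\min\{c_{V},c_{D}\}>0$. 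Thus $a$ is $\mathcal{H}$-coercive, and the Lax--Milgram lemma produces a unique $(\varphi_{N},\psi_{D})\in\mathcal{H}$ solving \eqref{Var Brinkman system}, which is the assertion.

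I do not expect a genuine obstacle here: the substantial analytic content has already been done, namely the Garding inequalities of Lemmas~\ref{Lemma3} and~\ref{Lemma2} together with the injectivity arguments that upgrade them to the ellipticity statements of Lemma~\ref{positiveD}. The only points requiring care are bookkeeping: that the diagonal test really does annihilate the off-diagonal $\textbf{K}$-terms (which rests on the precise meaning of the transpose and of the duality pairings on $\Gamma$), that the unknown $\varphi_{N}$ is constrained to the $\nu$-orthogonal subspace $\widetilde{H}^{\frac{1}{2}}_{\nu}(\Gamma_{N},\mathbb{R}^{2})\subset H^{\frac{1}{2}}_{\nu}(\Gamma,\mathbb{R}^{2})$ on which $\textbf{D}_{\alpha;\Gamma}$ is coercive — which is precisely why the extension $\tilde{\textbf{f}}$ was built to satisfy $\langle\tilde{\textbf{f}},\nu\rangle=0$ — and that $\psi_{D}$ being supported in $\overline{\Gamma_{D}}$ is what makes \eqref{PositivV} available.
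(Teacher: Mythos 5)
Your argument is correct and shares the paper's overall skeleton: test the bilinear form \eqref{bilinear form} on the diagonal, use that $\textbf{K}^{*}_{\alpha;\Gamma}$ is the transpose of $\textbf{K}_{\alpha;\Gamma}$ to cancel the two off-diagonal terms, establish $\mathcal{H}$-ellipticity, and conclude by Lax--Milgram. Where you genuinely differ is in how ellipticity is obtained. You import it directly from Lemma~\ref{positiveD}: since $\psi_{D}$ is supported in $\overline{\Gamma_{D}}$ and $\varphi_{N}$ lies in the $\nu$-orthogonal subspace, the estimates \eqref{PositivV} and \eqref{PositivD} apply term by term and the coercivity of $a$ is immediate. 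The paper instead invokes only the Garding inequalities of Lemmas~\ref{Lemma3} and~\ref{Lemma2} and then runs a separate injectivity argument for the coupled homogeneous system \eqref{homEq1} (representing the solution by layer potentials, showing the interior and exterior fields vanish, and reading $\varphi^{0}_{N}=\psi^{0}_{D}=0$ off the jump relations) before concluding strong $\mathcal{H}$-ellipticity --- in effect redoing for the coupled system the same ``Garding plus injectivity'' upgrade that is already carried out operator-by-operator inside the proof of Lemma~\ref{positiveD}. Your shortcut is legitimate and more economical, though it leans on exactly the hypotheses of Lemma~\ref{positiveD}, so your explicit remarks that $\varphi_{N}$ must be $\nu$-orthogonal (which is why $\tilde{\bf f}$ was constructed with $\langle\tilde{\bf f},\nu\rangle=0$) and that $\psi_{D}$ must be supported in $\overline{\Gamma_{D}}$ are the right points to flag. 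One minor omission: the paper's proof goes on to derive the continuous-dependence estimate \eqref{Estim} via the Riesz isomorphism and the boundedness of $\mathcal{A}^{-1}$, which is not part of the theorem's statement but is used later (e.g.\ in Theorem~\ref{Th2.6}); your proof covers the stated claim but not that addendum.
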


\begin{proof}
The proof of this theorem follows similar arguments to those of \cite[Theorem 3.10]{K-W}. Considering $ (\varphi_{N},\psi_{D}) \in \mathcal{H} $ we obtain the following relation for the bilinear form (\ref{bilinear form})
\begin{align}
a((\varphi_{N},\psi_{D});(\varphi_{N},\psi_{D})) &= \langle \mathcal{V}_{\alpha;\Gamma} \psi_{D}, \psi_{D} \rangle - \langle \textbf{K}_{\alpha;\Gamma} \varphi_{N} ,\psi_{D} \rangle \nonumber \\ &+ \langle \textbf{K}^{*}_{\alpha;\Gamma} \psi_{D},\varphi_{N} \rangle + \langle \textbf{D}_{\alpha; \Gamma} \varphi_{N}, \varphi_{N} \rangle \nonumber \\
&=\langle \mathcal{V}_{\alpha;\Gamma} \psi_{D}, \psi_{D} \rangle + \langle \textbf{D}_{\alpha; \Gamma} \varphi_{N}, \varphi_{N} \rangle.
\end{align}

Due to Lemma \ref{Lemma3} and Lemma \ref{Lemma2}, the bilinear form (\ref{bilinear form}) satisfies a Garding inequality of the form
\begin{equation}
(a+\mathcal{C})((\varphi_{N},\psi_{D});(\varphi_{N},\psi_{D})) \! \geq \!c_{D} \| \varphi_{N} \|^{2}_{H^{\frac{1}{2}}(\Gamma_{N}, \mathbb{R}^{2})} \!+\! c_{V} \| \psi_{D} \|^{2}_{H^{-\frac{1}{2}}(\Gamma_{D}, \mathbb{R}^{2})} \nonumber 
\end{equation}
with the bilinear form $\mathcal{C}$ given in terms of the compact operators $\mathcal{C}_{V}$ and $\mathcal{C}_{D}$ defined in \eqref{cmpV} and \eqref{cmp3} as follows
\begin{equation}
\mathcal{C}((u,t);(\varphi,\psi)) = \langle \mathcal{C}_{V}u,\varphi \rangle +  \langle \mathcal{C}_{D}t,\psi  \rangle, \quad \forall (\varphi,\psi) \in \mathcal{H}.
\end{equation}


Next, we show the positivity of the bilinear form $a$. To this end, let us consider the homogeneous system
\begin{equation}
\left\{
\begin{array}{l}
\label{homEq1}
\mathcal{V}_{\alpha;\Gamma} \psi^{0}_{D} - \textbf{K}_{\alpha;\Gamma} \varphi^{0} _{N}  =  0, \quad x \in \Gamma_{D} \\
\textbf{K}^{*}_{\alpha;\Gamma} \psi^{0} _{D}  + \textbf{D}_{\alpha; \Gamma}\varphi^{0}_{N}  =  0, \quad x \in \Gamma_{N}
\end{array}
\right.
\end{equation}
and let $ (\textbf{u}_{0}, p_{0}) $ be the fields given by 
\begin{equation}
\label{direct Approach2}
\textbf{u}(x) = \textbf{V}_{\alpha;\Gamma} (\psi^{0}_{D}) - \textbf{W}_{\alpha; \Gamma} (\varphi^{0} _{N}), \quad
p(x) = Q^{s}_{\alpha;\Gamma}  (\psi^{0}_{D}) - Q^{d}_{\alpha; \Gamma}  (\varphi^{0} _{N}),\ \mbox{ in } \ \mathbb{R}^{2}\backslash \Gamma.
\end{equation}
By the choice of the boundary conditions, we find that the interior boundary data satisfies 
\begin{equation}
\rm{Tr}^{+} \textbf{u}_{0}|_{\Gamma_{D}} = 0 \ \mbox{and} \ \partial^{+}_{\nu,\alpha}(\textbf{u}_{0},p_{0})|_{\Gamma_{N}} =0
\end{equation}
and hence by applying the Green formula \eqref{Green formula}, we obtain $ \textbf{u}_{0} = 0 $ in $ \mathfrak{D} $.

By the jump relations across $ \Gamma $ for the conormal derivative and the fact that $(\psi^{0}_{D})|_{\Gamma_{N}} = 0 $, we get
\begin{equation}
\partial^{-}_{\nu;\alpha}(\textbf{u}_{0},p_{0})|_{\Gamma_{N}} = \textbf{K}^{*}_{\alpha;\Gamma} \psi^{0} _{D}  + \textbf{D}_{\alpha; \Gamma}\varphi^{0}_{N}  =  0,
\end{equation}
and similarly 
\begin{equation}
\rm{Tr}^{-} \textbf{u}_{0}|_{\Gamma_{D}} = \mathcal{V}_{\alpha;\Gamma} \psi^{0}_{D} - \textbf{K}_{\alpha;\Gamma} \varphi^{0} _{N}  =  0.
\end{equation}
Because $ (\textbf{u}_{0},p_{0})  $ has to vanish at infinity  we have $ \textbf{u}_{0} = 0 $ in $ \mathfrak{D}_{-} $. The jump relations for the double-layer potential yield that $  0 = \rm{Tr}^{-} \textbf{u}_{0} - \rm{Tr}^{+} \textbf{u}_{0} = \varphi^{0}_{N} $ and for the single layer potential $ 0 = \partial^{-}_{\nu,\alpha}(\textbf{u}_{0},p_{0}) - \partial^{+}_{\nu,\alpha}(\textbf{u}_{0},p_{0}) = \psi^{0}_{D} $, i.e., $ (\varphi^{0}_{N}, \psi^{0}_{D}) = (0,0) $. Therefore, the bilinear form $ a $ is strongly $ \mathcal{H}-$elliptic and by the Lax-Milgram theorem (see, e.g., \cite[Theorem 5.2.3]{H-W}), the variational problem \ref{Var Brinkman system} has a unique solution.

Next, we show that the solution $(\textbf{u},p)$ is bounded. For any fixed $(\varphi_{N}, \psi_{D})$, we obtain a bounded, linear functional  $a_{( \varphi_{N},\psi_{D} )} \in \mathcal{H}'$ acting on $( \varphi, \psi) \in \mathcal{H}$ and hence by the Riesz representation theorem, there exists a unique $(f,g) \in \mathcal{H}$ such that 
\begin{equation}
a(( \varphi_{N}, \psi_{D});(\varphi,\psi)) = \langle (f,g),(\varphi,\psi) \rangle_{\mathcal{H}}, \quad \forall (\varphi,\psi) \in \mathcal{H}
\end{equation}
and we define $(f,g) = \mathcal{A}( \varphi_{N}, \psi_{D})$. It follows that the problem \eqref{Var Brinkman system} is equivalent to
\begin{equation}
\label{eqVarProb}
\mbox{Find} \ ( \varphi_{N}, \psi_{D}) \in \mathcal{H} \ \ \mbox{such that}  \ \
\mathcal{A}( \varphi_{N}, \psi_{D}) = (f,g).
\end{equation}

Since the bilinear form \eqref{bilinear form} is continuous, the operator $\mathcal{A} : \mathcal{H} \to \mathcal{H}$  and the functional $l : \mathcal{H} \to \mathbb{R}$ are linear and bounded. Moreover, the well-posedness of the variational problem \eqref{Var Brinkman system} implies that the operator $\mathcal{A}$ is invertible (for more details we refer the reader to \cite[Theorem 5.2.3]{H-W}). Thus, we obtain the estimate
\begin{equation}
\label{first estimate}
\|(\varphi_{N}, \psi_{D}) \|_{\mathcal{H}} = \|\mathcal{A}^{-1}(f,g) \|_{\mathcal{H}} \leq \|\mathcal{A}^{-1} \| \| (f,g) \|_{\mathcal{H}} \leq c\| (f,g) \|_{\mathcal{H}} 
\end{equation}

The Riesz representation operator $J : \mathcal{H}' \to \mathcal{H}$ mapping $\mathcal{H}' \ni l \to Jl = (f,g) \in \mathcal{H}$, i.e.,
\begin{equation}
\langle l, ( \varphi,\psi) \rangle = \langle Jl, ( \varphi, \psi) \rangle_{\mathcal{H}} := \langle (f,g), ( \varphi, \psi) \rangle_{\mathcal{H}}, \ \forall (\varphi,\psi) \in \mathcal{H},
\end{equation} 
is an isomorphism. Thus, the solution of the variational problem \eqref{Var Brinkman system} is given by
\begin{equation}
\label{PDVN}
( \varphi_{N}, \psi_{D} ) = \mathcal{A}^{-1} J l =: (\Phi J l, \Psi J l).
\end{equation}

Consequently, since the solution of the boundary value problem (\ref{Var Brinkman system}) is expressed in terms of layer potentials (\ref{direct Approach}) and due to the relation \eqref{PDVN}, we conclude that the solution is given by
\begin{align}
\label{SolDN}
\textbf{u}(x) = \textbf{V}_{\alpha;\Gamma} (\tilde{\textbf{g}} + \Psi J l ) - \textbf{W}_{\alpha; \Gamma} (\tilde{\textbf{f}} + \Phi J l)  =: \mathcal{U}(\textbf{f},\textbf{g})\\
p(x) = Q^{s}_{\alpha;\Gamma}  (\tilde{\textbf{g}} + \Psi J l ) - Q^{d}_{\alpha; \Gamma}  (\tilde{\textbf{f}} + \Phi J l )  =: \mathcal{P}(\textbf{f},\textbf{g}) 
\end{align}
and satisfies the estimate
\begin{equation}
\label{Estim}
\| \textbf{u} \| _{H^{1}(\mathfrak{D},\mathbb{R}^{2})} + \| p \|_{L^{2}(\mathfrak{D})} \leq c \big( \|\textbf{f}\|_{H^{\frac{1}{2}}(\Gamma_{D},\mathbb{R}^{2})} + \| \textbf{g} \| _{H^{-\frac{1}{2}}(\Gamma_{N},\mathbb{R}^{2})} \big).
\end{equation}
\end{proof}

\begin{remark} 
\item[$\ \ (i)$] Considering that $\mbox{meas}(\Gamma_{N}) = 0$, hence $\Gamma_{D} \equiv \Gamma$, we obtain the Dirichlet problem for the Brinkman system. Some attention is required since the single-potential integral operator $\mathcal{V}_{\alpha;\Gamma}$ is invertible only on the subspace $H^{\frac{1}{2}}_{\nu}(\Gamma, \mathbb{R}^{2})$, given by \eqref{OrthSubspace}. Thus, the operator (see, e.g., \cite[Theorem 5.2]{kohrpotanal})
\begin{align}
 \mathcal{V}_{\alpha;\Gamma} : H^{-\frac{1}{2}}(\Gamma, \mathbb{R}^{2})\slash \mathbb{R}\nu \to H^{\frac{1}{2}}_{\nu}(\Gamma, \mathbb{R}^{2}),
\end{align} 
is an isomorphism, and then the well-posedness result holds with the additional assumption that $\textbf{f} \in H^{\frac{1}{2}}_{\nu}(\Gamma, \mathbb{R}^{2})$ (see, e.g, \cite[Theorem 5.1 (iii)]{GuttMMA} and \cite[Theorem 9.18]{M-W} for the Stokes system). 
\item[$ \ \ (ii) \!$] Letting $\mbox{meas}(\Gamma_{D}) = 0$, i.e., $\Gamma_{N} \equiv \Gamma$, we obtain the Neumann problem for the Brinkman system (see, e.g, \cite[Theorem 5.6]{GuttMMA} and \cite[Theorem 9.19]{M-W} for the Stokes system).
\end{remark}

\subsection{The Poisson problem for the Brinkman system with mixed Dirichlet-Neumann conditions}
In this section we show the well-posedness result of the weak solution for the mixed boundary value problem of Dirichlet-Neumann type for the Brinkman system in $L^{2}$-based Sobolev spaces defined on a bounded Lipschitz domain $\mathfrak{D}$ in $\mathbb{R}^{2}$ with connected boundary. This immediate consequence of the above well-posedness result,  follows similar arguments as in  \cite{K-L-W4}, where the authors have studied the Poisson problem for the Stokes and Brinkman system (see also \cite[Section 4]{Kohr2016}).

For simplicity of notation, let us define the solution space $\mathcal{X}$, the boundary value space $\mathcal{B}_{DN}$ and the space $\mathcal{Y}$ for the mixed boundary value problem for the Brinkman system as
\begin{align}
\label{spacesXBY}
\mathcal{X}&:=H^{1}(\mathfrak{D},\mathbb{R}^{2})\times L^{2}(\mathfrak{D}),\ \ \mathcal{B}_{DN} := H^{\frac{1}{2}}(\Gamma_{D},\mathbb{R}^{2})\times H^{-\frac{1}{2}}(\Gamma_{N}, \mathbb{R}^{2}), \nonumber \\
\mathcal{Y}&:= \widetilde{H}^{-1}(\mathfrak{D},\mathbb{R}^{2}) \times \mathcal{B}_{DN}.
\end{align}
 
\begin{theorem} 
\label{Th2.6}
Assume that $\mathfrak{D} \subset \mathbb{R}^{2}$  is a bounded Lipschitz domain with connected boundary $\Gamma$ as in definition \ref{Domain}. Let $\alpha > 0$ be a constant. Then for all given data $({\bf f},{\bf g})\in \mathcal{B}_{DN}$, the mixed Dirichlet-Neumann boundary value problem for the Brinkman system
\begin{equation}
\label{mixed Poisson Brinkman system}
\left\{
\begin{array}{l}
\triangle {\bf u} - \alpha {\bf u} - \nabla p = \mathcal{F},\
{\rm{div}}\ {\bf u} = 0\ \mbox{ in }\  \mathfrak{D}\\
\left({\rm{Tr}} \ {\bf u}\right)|_{\Gamma_D}={\bf f}\in H^{\frac{1}{2}}(\Gamma_{D},\mathbb{R}^{2}) \\
\left(\partial_{\nu;\alpha}({\bf u},p )\right)|_{\Gamma_{N}} ={\bf g}\in H^{-\frac{1}{2}}(\Gamma_{N},\mathbb{R}^{2}), 
\end{array}
\right.
\end{equation}
is well-posed, i.e., there exists a unique solution $({\bf u},p)\in \mathcal{X}$. Moreover, there exists a linear continuous operator $ \mathcal{A}_{\alpha} : \mathcal{Y} \to \mathcal{X} $ delivering the solution, which satisfies the inequality
\begin{equation}
\label{estimate-mixed}
\|{\bf u}\|_{H^{1}(\mathfrak{D}, \mathbb{R}^{2})} + \| p \|_{L^{2}(\mathfrak{D})}\leq C\left( \| \mathcal{F}\|_{\widetilde{H}^{-1}(\mathfrak{D},\mathbb{R}^{2})} +\|{\bf f}\|_{H^{\frac{1}{2}}(\Gamma_{D},\mathbb{R}^{2})}+ \|{\bf g}\|_{H^{-\frac{1}{2}}(\Gamma_{N}, \mathbb{R}^{2})}\right), \nonumber
\end{equation}
with some constant $C\equiv C(\alpha, \Gamma_{D},\Gamma_{N})>0$.
\end{theorem}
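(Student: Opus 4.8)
The plan is to reduce the Poisson problem \eqref{mixed Poisson Brinkman system} to the homogeneous mixed Dirichlet--Neumann problem for the Brinkman system, whose well-posedness has already been established in Theorem \ref{Th2.3}, by removing the volume datum $\mathcal{F}$ with the help of a Newtonian (volume) potential for the operator $\mathcal{L}_{\alpha}$.

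First I would introduce the Newtonian velocity potential $\mathcal{N}_{\alpha}$ and the associated Newtonian pressure potential $\mathcal{Q}^{0}_{\alpha}$ for $\mathcal{L}_{\alpha}$, built from the fundamental tensors $\mathcal{G}^{\alpha}$ and $\Pi^{\alpha}$ of \eqref{E41}. Since $\mathcal{F}\in\widetilde{H}^{-1}(\mathfrak{D},\mathbb{R}^{2})$ is, by definition, an element of $H^{-1}(\mathbb{R}^{2},\mathbb{R}^{2})$ supported in $\overline{\mathfrak{D}}$, these operators apply directly, and I set $(\mathbf{u}_{\mathcal{F}},p_{\mathcal{F}}):=(\mathcal{N}_{\alpha}\mathcal{F},\mathcal{Q}^{0}_{\alpha}\mathcal{F})$. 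The facts I would use are: $\mathcal{L}_{\alpha}(\mathbf{u}_{\mathcal{F}},p_{\mathcal{F}})=\mathcal{F}$ and $\mathrm{div}\,\mathbf{u}_{\mathcal{F}}=0$ in $\mathfrak{D}$ (the divergence-free property following from the fact that the fundamental velocity tensor is solenoidal in its spatial variable), together with the mapping estimate $\|\mathbf{u}_{\mathcal{F}}\|_{H^{1}(\mathfrak{D},\mathbb{R}^{2})}+\|p_{\mathcal{F}}\|_{L^{2}(\mathfrak{D})}\le C\|\mathcal{F}\|_{\widetilde{H}^{-1}(\mathfrak{D},\mathbb{R}^{2})}$ (cf.\ \cite{K-L-W4}, \cite{M-T-Poisson}, \cite{Kohr2016}). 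In particular $(\mathbf{u}_{\mathcal{F}},p_{\mathcal{F}};\mathcal{F})\in H^{1}(\mathfrak{D},\mathcal{L}_{\alpha})$, so by Lemma \ref{lem 1.5} and Lemma \ref{lem 1.6} both the trace $\mathrm{Tr}\,\mathbf{u}_{\mathcal{F}}\in H^{\frac12}(\Gamma,\mathbb{R}^{2})$ and the conormal derivative $\partial_{\nu;\alpha}(\mathbf{u}_{\mathcal{F}},p_{\mathcal{F}})_{\mathcal{F}}\in H^{-\frac12}(\Gamma,\mathbb{R}^{2})$ are well-defined and depend continuously on $\mathcal{F}$.

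Next I would put $\mathbf{v}:=\mathbf{u}-\mathbf{u}_{\mathcal{F}}$ and $q:=p-p_{\mathcal{F}}$. Using the defining formula \eqref{conormal} one checks the additivity $\partial_{\nu;\alpha}(\mathbf{u},p)_{\mathcal{F}}=\partial_{\nu;\alpha}(\mathbf{v},q)+\partial_{\nu;\alpha}(\mathbf{u}_{\mathcal{F}},p_{\mathcal{F}})_{\mathcal{F}}$, where the first term on the right is the homogeneous conormal derivative; hence $(\mathbf{v},q)$ solves $\mathcal{L}_{\alpha}(\mathbf{v},q)=0$, $\mathrm{div}\,\mathbf{v}=0$ in $\mathfrak{D}$, subject to the modified boundary data
\begin{align}
(\mathrm{Tr}\,\mathbf{v})|_{\Gamma_{D}}&=\mathbf{f}-(\mathrm{Tr}\,\mathbf{u}_{\mathcal{F}})|_{\Gamma_{D}}=:\mathbf{f}_{0}\in H^{\frac12}(\Gamma_{D},\mathbb{R}^{2}),\nonumber\\
(\partial_{\nu;\alpha}(\mathbf{v},q))|_{\Gamma_{N}}&=\mathbf{g}-\big(\partial_{\nu;\alpha}(\mathbf{u}_{\mathcal{F}},p_{\mathcal{F}})_{\mathcal{F}}\big)|_{\Gamma_{N}}=:\mathbf{g}_{0}\in H^{-\frac12}(\Gamma_{N},\mathbb{R}^{2}).\nonumber
\end{align}
By Theorem \ref{Th2.3} --- more precisely, by the linear solution operator $(\mathcal{U},\mathcal{P})$ and the stability estimate \eqref{Estim} obtained in its proof --- the homogeneous mixed problem with data $(\mathbf{f}_{0},\mathbf{g}_{0})\in\mathcal{B}_{DN}$ has a unique solution $(\mathbf{v},q)\in\mathcal{X}$ with $\|\mathbf{v}\|_{H^{1}(\mathfrak{D},\mathbb{R}^{2})}+\|q\|_{L^{2}(\mathfrak{D})}\le c\big(\|\mathbf{f}_{0}\|_{H^{\frac12}(\Gamma_{D},\mathbb{R}^{2})}+\|\mathbf{g}_{0}\|_{H^{-\frac12}(\Gamma_{N},\mathbb{R}^{2})}\big)$. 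Then $(\mathbf{u},p):=(\mathbf{v}+\mathbf{u}_{\mathcal{F}},q+p_{\mathcal{F}})\in\mathcal{X}$ solves \eqref{mixed Poisson Brinkman system}.

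Finally, uniqueness follows since the difference of two solutions of \eqref{mixed Poisson Brinkman system} solves the homogeneous problem with zero data and hence vanishes by Theorem \ref{Th2.3}. The solution operator $\mathcal{A}_{\alpha}(\mathcal{F},\mathbf{f},\mathbf{g}):=(\mathbf{v},q)+(\mathbf{u}_{\mathcal{F}},p_{\mathcal{F}})$ is a composition of the linear continuous maps $\mathcal{F}\mapsto(\mathbf{u}_{\mathcal{F}},p_{\mathcal{F}})$, the trace and conormal-derivative operators, the restrictions to $\Gamma_{D}$ and $\Gamma_{N}$, and the solution operator of Theorem \ref{Th2.3}; thus $\mathcal{A}_{\alpha}:\mathcal{Y}\to\mathcal{X}$ is linear and bounded. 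Chaining the Newtonian-potential bound, the continuity of $\mathrm{Tr}$ and of $\partial_{\nu;\alpha}$ on $H^{1}(\mathfrak{D},\mathcal{L}_{\alpha})$ (which gives $\|\mathbf{f}_{0}\|_{H^{\frac12}(\Gamma_{D})}+\|\mathbf{g}_{0}\|_{H^{-\frac12}(\Gamma_{N})}\le C(\|\mathcal{F}\|_{\widetilde{H}^{-1}}+\|\mathbf{f}\|_{H^{\frac12}(\Gamma_{D})}+\|\mathbf{g}\|_{H^{-\frac12}(\Gamma_{N})})$), and the estimate from Theorem \ref{Th2.3} then yields the asserted inequality. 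I expect the main technical obstacle to be the Newtonian-potential step: verifying that it maps $\widetilde{H}^{-1}(\mathfrak{D},\mathbb{R}^{2})$ continuously into $H^{1}(\mathfrak{D},\mathbb{R}^{2})\times L^{2}(\mathfrak{D})$ with a divergence-free velocity component, so that $(\mathbf{u}_{\mathcal{F}},p_{\mathcal{F}};\mathcal{F})\in H^{1}(\mathfrak{D},\mathcal{L}_{\alpha})$ and Lemma \ref{lem 1.6} can be invoked; once this is in place, the remainder is routine linear superposition and bookkeeping of constants.
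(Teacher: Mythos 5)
Your proposal is correct and follows essentially the same route as the paper: subtract the Newtonian velocity and pressure potentials of $\mathcal{F}$, reduce to the homogeneous mixed Dirichlet--Neumann problem with the modified boundary data $(\mathbf{f}_{0},\mathbf{g}_{0})$, and invoke Theorem \ref{Th2.3} together with the continuity of the Newtonian, trace and conormal-derivative operators. The only difference is that you spell out the additivity of the conormal derivative and the divergence-free property of the Newtonian velocity potential more explicitly than the paper does, which is a welcome clarification rather than a deviation.
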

\begin{proof}
We construct the solution of the problem \eqref{mixed Poisson Brinkman system} as a combination between Newtonian potential operators $({\mathcal N}_{\alpha ;{\mathfrak D}}{\mathcal F}, {\mathcal Q}_{\alpha ;{\mathfrak D}}{\mathcal F})$ and the unknown solution $(\textbf{v}, q) $, which satisfies the mixed Dirichlet-Neumann problem \eqref{Brinkman system} in the form (see, e.g., \cite[Theorem 6.1]{K-L-W2}) 
\begin{align}
\label{Poisson-mixed-2}
{\bf u}={\mathcal N}_{\alpha ;{\mathfrak D}}{\mathcal F}+{\bf v},\ p ={\mathcal Q}_{\alpha ;{\mathfrak D}}{\mathcal F}+q,
\end{align}
where
\begin{align}
\label{Newtonian-1a-Lp}
&{\mathcal N}_{\alpha;\mathfrak{D}}:H^{-1}(\mathfrak{D},\mathbb{R}^{2})\to H^{1}(\mathfrak{D},\mathbb{R}^{2}),\ \ {\mathcal N}_{\alpha;\mathfrak{D}}{\bf w}:=-\left\langle \mathcal{G}^{\alpha}(\cdot,\cdot), {\bf w}\right\rangle _{{\mathfrak D}},\\
\label{Newtonian-1b-Lp}
&{\mathcal Q}_{\alpha;\mathfrak{D}}:H^{-1}(\mathfrak{D},\mathbb{R}^{2})\to L^{2}(\mathfrak{D}),\ \ {\mathcal Q}_{\alpha;\mathfrak{D}}{\bf w}:=-\left\langle {\Pi}^{\alpha}(\cdot,\cdot), {\bf w}\right\rangle _{{\mathfrak D}}.
\end{align}
The Newtonian velocity and pressure potential operators of the Brinkman system are linear and continuous pseudodifferential operators of order $-2$ and $-1$ (see also \cite[Lemma 3.2]{K-L-M-W} for further related mapping properties). In addition, the Newtonian potentials satisfy the relation $({\mathcal N}_{\alpha;\mathfrak{D}}{\mathcal F}, {\mathcal Q}_{\alpha;\mathfrak{D}}{\mathcal F}; {\mathcal F}) \in H^{1}(\mathfrak{D}, \mathfrak{L}_{\alpha})$.

Then by the construction in \eqref{Poisson-mixed-2} and the properties of the Newtonian potentials, the Poisson problem of mixed type \eqref{mixed Poisson Brinkman system} is reduced to the mixed Dirichlet-Neumann problem for the homogeneous Brinkman system, i.e., $(\textbf{v}, q) \in H^{1}(\mathfrak{D}, \mathfrak{L}_{\alpha})$ and $ \mathfrak{L}_{\alpha}(\textbf{v},q) = \textbf{0} $, with the boundary conditions 

\begin{align}
\label{modif}
{\bf f}_0:={\bf f}-\left({\mathcal N}_{\alpha ;{\mathfrak D}}{\mathcal F}\right)|_{\Gamma_D},\ \
{\bf g}_0:={\bf g}-\left(\partial _{\alpha ;\nu }\left({\mathcal N}_{\alpha ;{\mathfrak D}}{\mathcal F},{\mathcal Q}_{\alpha ;{\mathfrak D}}{\mathcal F}\right)\right)|_{\Gamma_{N}}.
\end{align}
Theorem \ref{Th2.3} asserts that the mixed Dirichlet-Neumann boundary value problem with boundary data $({\bf 0},{\bf f}_0,{\bf g}_0)$ has a unique solution $({\bf v},q)\in \mathcal{X},$ which satisfies an estimate of type \eqref{Estim}. Hence, we obtain the unique solution of the Poisson problem for the Brinkman system delivered by the linear and continuous operator  $\mathcal{A}_{\alpha} : \mathcal{Y} \to \mathcal{X}$ defined by
\begin{align}
 \mathcal{A}_{\alpha}(\mathcal{F}, \textbf{f}, \textbf{g}) = (\mathcal{U}_{\mathcal{F}}, \mathcal{P}_{\mathcal{F}}) := \big({\mathcal N}_{\alpha ;{\mathfrak D}}{\mathcal F} + \mathcal{U}(\textbf{f}_{0}, \textbf{g}_{0}) , Q_{\alpha ;{\mathfrak D}}{\mathcal F} + \mathcal{P}(\textbf{f}_{0}, \textbf{g}_{0}) \big).
\end{align}
\end{proof}

\subsection{The mixed Dirichlet-Robin problem for the Brinkman system}
Next, we are concerned with the mixed Dirichlet-Robin boundary value problem for the Brinkman system. Let us consider now that the boundary $\Gamma$ is partitioned in two non-overlapping parts $\Gamma_{D}$ and $\Gamma_{R}$ such that $\overline{\Gamma}_{D} \cup \overline{\Gamma}_{R} = \Gamma$ in analogy with definition \ref{Domain}, i.e., now we have $\mathcal{B}_{DR} = H^{\frac{1}{2}}(\Gamma_{D},\mathbb{R}^{2}) \times H^{-\frac{1}{2}}(\Gamma_{R},\mathbb{R}^{2})$ and $\mathcal{Y} = \widetilde{H}^{-1}(\mathfrak{D}, \mathbb{R}^{2}) \times \mathcal{B}_{DR}$. Then the mixed Dirichlet-Robin boundary value problem for the Brinkman system is
\begin{equation}
\label{D-R Brinkman system}
\left\{
\begin{array}{l}
\Delta \textbf{u} - \alpha \textbf{u} - \nabla p = \mathcal{F}, \
\rm{div} \ \textbf{u} = 0  \ \mbox{in} \ \mathfrak{D}  \\
(\rm{Tr} \ \textbf{u})|_{\Gamma_D} = \textbf{f} \in H^{\frac{1}{2}}(\Gamma_{D},\mathbb{R}^{2}), \\
(\partial_{\nu;\alpha}(\textbf{u},p)|_{\Gamma_{R}}  +   (\lambda \rm{Tr} \ \textbf{u})|_{\Gamma_{R}}  = \textbf{h}  \in H^{-\frac{1}{2}}(\Gamma_{R},\mathbb{R}^{2}).
\end{array}
\right.
\end{equation}
where $(\cdot)|_{\Gamma_{R}}$ denotes the restriction operator from the entire boundary to $\Gamma_{R}$ and $\lambda \in L^{\infty}(\Gamma_{R},\mathbb{R}^{2} \otimes \mathbb{R}^{2})$ is a symmetric matrix valued function such that (as in \cite[Theorem 4.1]{K-L-W3})
\begin{equation}
\label{lambda}
\langle \lambda \textbf{v}, \textbf{v} \rangle_{\Gamma_{R}} \geq 0, \ \forall {\bf v} \in L^{2}(\Gamma_{R}, \mathbb{R}^{2}).
\end{equation}

\begin{theorem}
\label{Th2.7}
Let $ \mathfrak{D} \subset \mathbb{R}^{2}$ be a bounded Lipschitz domain with connected boundary $ \Gamma = \partial \mathfrak{D} $, which is decomposed into two adjacent, nonoverlapping parts $ \Gamma = \overline{\Gamma}_{D} \cup \overline{\Gamma}_{R} $ as in definition \ref{Domain}. Let $\alpha >0$ be given constants and let $\lambda \in L^{\infty}(\Gamma,\mathbb{R}^{2} \otimes \mathbb{R}^{2})$  be a symmetric matrix valued function with property \eqref{lambda}. Then the problem \eqref{D-R Brinkman system} has a unique solution, which satisfies an estimate 
\begin{equation}
\| {\bf u} \| _{H^{1}(\mathfrak{D},\mathbb{R}^{2})} + \| p \|_{L^{2}(\mathfrak{D})} \leq c \big(\|\mathcal{F} \|_{\widetilde{H}^{-1}(\mathfrak{D},\mathbb{R}^{2})} + \| {\bf f}\|_{H^{\frac{1}{2}}(\Gamma_{D},\mathbb{R}^{2})} + \| {\bf h} \| _{H^{-\frac{1}{2}}(\Gamma_{R},\mathbb{R}^{2})} \big)
\end{equation}
\end{theorem}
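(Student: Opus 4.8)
The plan is to reduce the mixed Dirichlet--Robin problem \eqref{D-R Brinkman system} to the already-solved mixed Dirichlet--Neumann Poisson problem (Theorem~\ref{Th2.6}) by incorporating the Robin term $\lambda\,\mathrm{Tr}\,\mathbf{u}$ into a fixed-point or, more directly, a perturbation argument. First, as in the proof of Theorem~\ref{Th2.6}, I would split off the Newtonian potentials $({\mathcal N}_{\alpha;\mathfrak{D}}{\mathcal F},{\mathcal Q}_{\alpha;\mathfrak{D}}{\mathcal F})$ so that it suffices to treat the homogeneous Brinkman system $\mathcal{L}_\alpha(\mathbf{v},q)=\mathbf{0}$ with modified boundary data, and in fact one may assume $\mathcal{F}=\mathbf{0}$ and absorb its contribution into $\mathbf{f}$ and $\mathbf{h}$ exactly as in \eqref{modif}. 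Thus the core task is: given $\mathbf{f}\in H^{\frac12}(\Gamma_D,\mathbb{R}^2)$ and $\mathbf{h}\in H^{-\frac12}(\Gamma_R,\mathbb{R}^2)$, find a unique weak solution of the homogeneous Brinkman system with $(\mathrm{Tr}\,\mathbf{u})|_{\Gamma_D}=\mathbf{f}$ and $(\partial_{\nu;\alpha}(\mathbf{u},p))|_{\Gamma_R}+(\lambda\,\mathrm{Tr}\,\mathbf{u})|_{\Gamma_R}=\mathbf{h}$.

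The key step is to redo the boundary-integral reduction of Section~2 with the Robin condition in place of the Neumann one. Using the direct representation \eqref{direct Approach} and the jump relations from Theorem~\ref{layer-potential-properties3}, I obtain the same pair of integral equations \eqref{CP1}--\eqref{CP2} on $\Gamma$; restricting the first to $\Gamma_D$ and forming, on $\Gamma_R$, the combination $\bigl((-\tfrac12\mathbb{I}+\mathbf{K}^*_{\alpha;\Gamma})\psi_D+\mathbf{D}_{\alpha;\Gamma}\varphi\bigr)+\lambda\bigl((\tfrac12\mathbb{I}+\mathbf{K}_{\alpha;\Gamma})\varphi+\mathcal{V}_{\alpha;\Gamma}\psi_D\bigr)$ dictated by the Robin condition yields a modified system whose associated bilinear form $a_\lambda$ on $\mathcal{H}$ differs from \eqref{bilinear form} by the extra term $\langle\lambda\,(\tfrac12\mathbb{I}+\mathbf{K}_{\alpha;\Gamma})\varphi_N+\lambda\,\mathcal{V}_{\alpha;\Gamma}\psi_D,\ (\tfrac12\mathbb{I}+\mathbf{K}_{\alpha;\Gamma})\varphi+\mathcal{V}_{\alpha;\Gamma}\psi\rangle_{\Gamma_R}$, i.e., by a term of the form $\langle\lambda\,\mathcal{T}(\varphi_N,\psi_D),\mathcal{T}(\varphi,\psi)\rangle_{\Gamma_R}$ where $\mathcal{T}(\varphi,\psi):=(\tfrac12\mathbb{I}+\mathbf{K}_{\alpha;\Gamma})\varphi+\mathcal{V}_{\alpha;\Gamma}\psi = \mathrm{Tr}^+\mathbf{u}$. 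By the sign condition \eqref{lambda}, this added term is nonnegative on the diagonal, so $a_\lambda$ still satisfies the same G\aa rding inequality as $a$ (Lemma~\ref{Lemma3}, Lemma~\ref{Lemma2} are unaffected, since the perturbation, being built from the bounded operators $\mathcal{V}_{\alpha;\Gamma},\mathbf{K}_{\alpha;\Gamma}$ composed with multiplication by $\lambda\in L^\infty$, is continuous and, by the compact embedding $H^{\frac12}(\Gamma_R)\hookrightarrow L^2(\Gamma_R)$, its ``cross'' contributions are compact). Coerciveness modulo compact operators is therefore inherited.

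For injectivity of $a_\lambda$ I would run the uniqueness argument of Theorem~\ref{Th2.3} verbatim: from a solution of the homogeneous system build $(\mathbf{u}_0,p_0)$ via \eqref{direct Approach2}; the Green formula \eqref{Green formula} applied on $\mathfrak{D}_+$ now gives $2\langle\mathbb{E}(\mathbf{u}_0),\mathbb{E}(\mathbf{u}_0)\rangle_{\mathfrak{D}_+}+\alpha\langle\mathbf{u}_0,\mathbf{u}_0\rangle_{\mathfrak{D}_+}+\langle\lambda\,\mathrm{Tr}^+\mathbf{u}_0,\mathrm{Tr}^+\mathbf{u}_0\rangle_{\Gamma_R}=0$ because $\mathrm{Tr}^+\mathbf{u}_0|_{\Gamma_D}=0$ and the Robin datum vanishes; all three terms being nonnegative forces $\mathbf{u}_0=0$ in $\mathfrak{D}_+$, then the exterior decay and jump relations give $\varphi^0=0,\psi^0=0$ as before. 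Hence $a_\lambda$ is strongly $\mathcal{H}$-elliptic, Lax--Milgram applies, the solution is represented through layer potentials as in \eqref{SolDN}--\eqref{Estim}, and combining with the Newtonian-potential splitting and the continuity estimate of Theorem~\ref{Th2.6} yields the claimed bound. The main obstacle is bookkeeping: writing the Robin version of \eqref{Eq1} correctly — in particular making sure the test-space/trial-space pairing on $\Gamma_R$ is symmetric so that $a_\lambda$ is genuinely a bilinear form on $\mathcal{H}$ — and checking that the extra $\lambda$-term splits cleanly into a nonnegative diagonal part plus a compact remainder so that the G\aa rding inequality and the ellipticity argument both go through unchanged.
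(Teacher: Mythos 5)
Your overall strategy is viable but genuinely different from the paper's: you go back to the boundary integral equations and perturb the variational form $a$ on $\mathcal{H}$ by a $\lambda$-term, whereas the paper never touches the BIEs again. It simply writes the Robin problem as $(\mathbf{u},p)=\mathcal{A}_{\alpha}(\mathcal{F},\mathbf{f},\mathbf{h}-(\lambda\,\mathrm{Tr}\,\mathbf{u})|_{\Gamma_R})$, where $\mathcal{A}_{\alpha}$ is the solution operator of Theorem~\ref{Th2.6}, rewrites this by linearity as $(\mathbf{u},p)+\mathcal{A}_{\alpha}(\mathbf{0},\mathbf{0},(\lambda\,\mathrm{Tr}\,\mathbf{u})|_{\Gamma_R})=\mathcal{A}_{\alpha}(\mathcal{F},\mathbf{f},\mathbf{h})$, and observes that the left-hand side is a compact perturbation of the identity on $\mathcal{X}$ (via $H^{\frac12}(\Gamma)\hookrightarrow L^2(\Gamma)\hookrightarrow H^{-\frac12}(\Gamma_R)$), hence Fredholm of index zero; injectivity then needs only the \emph{interior} energy identity \eqref{DREQ3}, where the sign condition \eqref{lambda} works in your favour. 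That economy is the whole point of the paper's route.

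The concrete gap in your proposal is the injectivity step, where you claim to run the uniqueness argument of Theorem~\ref{Th2.3} ``verbatim'' and conclude $\varphi^{0}=\psi^{0}=0$ from the exterior problem ``as before''. The interior half is fine: the pairing $\langle\partial^{+}_{\nu;\alpha}(\mathbf{u}_0,p_0),\mathrm{Tr}^{+}\mathbf{u}_0\rangle$ equals $-\langle\lambda\,\mathrm{Tr}^{+}\mathbf{u}_0,\mathrm{Tr}^{+}\mathbf{u}_0\rangle_{\Gamma_R}\le 0$ while the Green formula makes it equal to $2\|\mathbb{E}(\mathbf{u}_0)\|^2_{\mathfrak{D}_+}+\alpha\|\mathbf{u}_0\|^2_{\mathfrak{D}_+}\ge 0$, so $\mathbf{u}_0=0$ in $\mathfrak{D}_+$. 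But to kill the densities you also need $\mathbf{u}_0=0$ in $\mathfrak{D}_-$, and the exterior field inherited from the homogeneous modified BIE system satisfies the homogeneous \emph{exterior} Robin condition $\partial^{-}_{\nu;\alpha}(\mathbf{u}_0,p_0)|_{\Gamma_R}+\lambda\,\mathrm{Tr}^{-}\mathbf{u}_0|_{\Gamma_R}=0$ with the \emph{same} sign of $\lambda$. The exterior Green formula then reads $2\|\mathbb{E}(\mathbf{u}_0)\|^2_{\mathfrak{D}_-}+\alpha\|\mathbf{u}_0\|^2_{\mathfrak{D}_-}=\langle\lambda\,\mathrm{Tr}^{-}\mathbf{u}_0,\mathrm{Tr}^{-}\mathbf{u}_0\rangle_{\Gamma_R}$, with \emph{both} sides nonnegative, so no conclusion follows; the sign condition \eqref{lambda} that helps inside hurts outside. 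The argument is therefore not ``verbatim''. The clean repair within your framework is to drop the injectivity route altogether: Lemma~\ref{positiveD} and the proof of Theorem~\ref{Th2.3} already give $a((\varphi,\psi);(\varphi,\psi))=\langle\mathcal{V}_{\alpha;\Gamma}\psi,\psi\rangle+\langle\mathbf{D}_{\alpha;\Gamma}\varphi,\varphi\rangle\ge c_V\|\psi\|^2+c_D\|\varphi\|^2$ on $\mathcal{H}$, and your $\lambda$-perturbation is nonnegative on the diagonal, so $a_\lambda$ is coercive outright and Lax--Milgram applies directly. You would still owe the reader the verification that your modified BIE system is equivalent to the Robin BVP (including the correct form of the trace operator $\mathcal{T}$ on $\Gamma_R$, where your formula $(\tfrac12\mathbb{I}+\mathbf{K}_{\alpha;\Gamma})\varphi+\mathcal{V}_{\alpha;\Gamma}\psi$ has the wrong sign on $\mathbf{K}_{\alpha;\Gamma}$ relative to the representation $\mathbf{u}=\mathbf{V}_{\alpha;\Gamma}\psi-\mathbf{W}_{\alpha;\Gamma}\varphi$), which the paper's operator-level argument never has to address.
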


\begin{proof} The proof of this theorem follows the main ideas in the proof of \cite[Theorem 4.4]{Kohr2016}.
Theorem \ref{Th2.6} asserts that there exists a linear and continuous operator $\mathcal{A}_{\alpha} : \mathcal{Y} \to \mathcal{X} $ delivering the unique solution $(\textbf{u}, p)$ of the mixed Dirichlet-Neumann problem
\begin{equation}
(\textbf{u}, p) = \mathcal{A}_{\alpha}(\mathcal{F}, \textbf{f},\textbf{h} - (\lambda \rm{Tr} \ \textbf{u} )|_{\Gamma_{R}}),
\end{equation}
which can be rewritten, due to the linearity of the operator $\mathcal{A}_{\alpha}$, as
\begin{equation}
\label{EqDN-DR}
(\textbf{u}, p) + \mathcal{A}_{\alpha}(\textbf{0}, \textbf{0}, (\lambda \rm{Tr} \ \textbf{u} )|_{\Gamma
_{R}} ) = \mathcal{A}_{\alpha}(\mathcal{F}, \textbf{f},\textbf{h} ).
\end{equation}
The compactness of the embedding
\begin{equation}
L^{\infty}(\Gamma, \mathbb{R}^{2} \otimes \mathbb{R}^{2}) \cdot H^{\frac{1}{2}}(\Gamma, \mathbb{R}^{2}) \hookrightarrow L^{2}(\Gamma, \mathbb{R}^{2}) 
\end{equation}
together with the continuity of the restriction to $\Gamma_{R}$, the continuity of the embedding
\begin{equation}
L^{2}(\Gamma_{R}, \mathbb{R}^{2}) \hookrightarrow H^{-\frac{1}{2}}(\Gamma_{R}, \mathbb{R}^{2})
\end{equation}
and the continuity of $\mathcal{A}_{\alpha}$, implies that the left hand side of \eqref{EqDN-DR} defines a Fredholm operator with index zero denoted by
\begin{equation}
\mathcal{B}_{\lambda} : H^{1}(\mathfrak{D},\mathbb{R}^{2}) \times L^{2}(\mathfrak{D})  \to H^{1}(\mathfrak{D},\mathbb{R}^{2}) \times L^{2}(\mathfrak{D}). 
\end{equation}
Next, we show that the operator $\mathcal{B}_{\lambda}$ is one-to-one, i.e., that $\rm{Ker} \ \mathcal{B}_{\lambda} = \{ (\textbf{0},0)\}$, which is equivalent to the well-posedness of the mixed boundary value problem \eqref{D-R Brinkman system}.

To this end, let us assume that the homogeneous problem associated to \eqref{D-R Brinkman system} has the solution
$(\textbf{u}_{0},p_{0}) \in \mathcal{X} $. Applying Green identity \eqref{Green formula} to $\mathfrak{D}$, we obtain (see, e.g., \cite[Theorem 6.1]{K-L-W2} )
\begin{align}
\label{DREQ3}
 2\langle \mathbb{E}(\textbf{u}_{0}), \mathbb{E}(\textbf{u}_{0}) \rangle  +  \alpha \langle \textbf{u}_{0}, \textbf{u}_{0} \rangle  = \langle \partial_{\nu; \alpha}(\textbf{u}_{0}, p_{0}), \rm{Tr} \ \textbf{u}_{0} \rangle 
= - \langle (\lambda \rm{Tr} \ \textbf{u}_{0})|_{\Gamma_{R}}, (\rm{Tr} \ \textbf{u}_{0})|_{\Gamma_{R}} \rangle. 
\end{align}
The left hand side of \eqref{DREQ3}  is non-negative and the right hand side is non-positive, due to condition \eqref{lambda} for the tensor field $\lambda$. Hence, each term of \eqref{DREQ3} vanishes and we obtain $\textbf{u} = 0$ and $p_{0} = c \in \mathbb{R}$ in $\mathfrak{D}$. In addition, $(\rm{Tr} \ \textbf{u})|_{\Gamma_{R}} = 0$, which implies that $(\partial_{\nu;\alpha}(\textbf{u}, p))|_{\Gamma_{R}} = 0$ such that we also obtain $p_{0} = 0$ in $\mathfrak{D}$ and hence the desired uniqueness result is proved.
Consequently, the solution of the mixed Dirichlet-Robin boundary value problem is given by the operator
\begin{equation}
\label{Salphalambda}
\mathcal{S}_{\alpha;\lambda} : \mathcal{Y} \to \mathcal{X}, \quad \mathcal{S}_{\alpha;\lambda} := \mathcal{B}^{-1}_{\lambda}\mathcal{A}_{\alpha} : \mathcal{Y} \to \mathcal{X}.
\end{equation}
\end{proof}

\subsection{Mixed Dirichlet-Robin problem for the nonlinear Darcy-Forchheimer-Brink\-man system }
Going further, we obtain a similar existence and uniqueness result as in \cite[Theorem 7.1]{K-L-W2} for the weak solution of the mixed Dirichlet-Robin problem \eqref{Darcy-Brinkamn problem}, with the given data $(\textbf{f}, \textbf{h}) \in \mathcal{B}_{DR}$.  The Darcy-Forchheimer-Brinkman system with Robin boundary conditions in Lipschitz domains in Euclidean settings has been investigated in \cite{K-L-W3} (see also \cite{K-L-W-Robin} and \cite{Kohr2016} for transmission problems). Recently, the authors\cite{GuttMMA} obtained well-posedness results for the mixed Dirichlet-Neumann problem for semilinear Darcy-Forchheimer-Brinkman system on creased Lipschitz domains in $\mathbb{R}^{3}$.

\begin{theorem}
\label{Theorem 3.2}
Let $\mathfrak{D} \subset \mathbb{R}^{2}$ be a bounded Lipschitz domain with connected boundary $\Gamma $, which is decomposed similar to definition \ref{Domain} into two disjoint nonoverlapping parts $\Gamma_{D}$ and $\Gamma_{R}$. Let $\alpha ,\beta >0$ be given constants and $\lambda \in L^{\infty}(\Gamma,\mathbb{R}^{2} \otimes \mathbb{R}^{2})$ is a symmetric matrix valued function with property \eqref{lambda}. Then there exists two constants $C_{j}\equiv C_{j}(\mathfrak{D},\alpha,\beta)>0$, $j=1,2$, with the property that for all data $({\bf f},{\bf h})\in \mathcal{B}_{DR}$ satisfying the condition
\begin{equation}
\label{inequality p2 1}
\|{\bf f}\|_{H^{\frac{1}{2}}(\Gamma_{D},\mathbb{R}^{2})}+\|{\bf h}\|_{H^{-\frac{1}{2}}(\Gamma_{R},\mathbb{R}^{2})}\leq C_{1},
\end{equation}
the mixed Dirichlet-Robin problem for the nonlinear Darcy-Forchheimer-Brinkman system
\begin{equation}
\label{Darcy-Brinkamn problem}
\left\{
\begin{array}{l}
\triangle {\bf u} - \alpha {\bf u} -  \beta ({ {\bf u} \cdot \nabla}){\bf u} - \nabla p ={\bf 0},\ {\rm{div}}\ {\bf u}=0\ \mbox{ in }\ {\mathfrak D},\\
\left({\rm{Tr}} \ {\bf u}\right)|_{\Gamma_{D}}={\bf f}  \in  H^{\frac{1}{2}}(\Gamma_{D}, \mathbb{R}^{2})  \\
\left(\partial_{\nu;\alpha }({\bf u},p )\right)|_{\Gamma_{R}} + \lambda \left({\rm{Tr}} \ {\bf u}\right)|_{\Gamma_{R}} ={\bf h} \in H^{-\frac{1}{2}}(\Gamma_{R}, \mathbb{R}^{2})
\end{array}
\right.
\end{equation}
has a unique solution $({\bf u},p)\in \mathcal{X}$, with the property $\|{\bf u}\|_{H^{1}(\mathfrak{D},\mathbb{R}^{2})}\leq C_{2}$. Moreover, the solution depends continuously on the given data, and satisfies the estimate
\begin{align}
\label{estimate-D-B-F-new1-new1-D-Rn2-Lp}
\|{\bf u}\|_{H^{1}(\mathfrak{D},\mathbb{R}^{2})}+\|p \|_{L^{2}(\mathfrak{D})}\leq C\left(\|{\bf f}\|_{H^{\frac{1}{2}}(\Gamma_{D},\mathbb{R}^{2})}+\|{\bf h}\|_{H^{-\frac{1}{2}}(\Gamma_{R},\mathbb{R}^{2})}\right)
\end{align}
with some constant $C\equiv C(\mathfrak{D},\alpha ,\beta)>0$.
\end{theorem}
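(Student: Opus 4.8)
The plan is to reduce the nonlinear system \eqref{Darcy-Brinkamn problem} to a fixed point equation for the velocity field, built from the linear solution operator $\mathcal{S}_{\alpha;\lambda}:\mathcal{Y}\to\mathcal{X}$ of Theorem \ref{Th2.7} (see \eqref{Salphalambda}), and to solve it by the Banach contraction principle on a small closed ball of $H^{1}(\mathfrak{D},\mathbb{R}^{2})$. \textbf{Step 1.} First I would rewrite the momentum equation as $\triangle\textbf{u}-\alpha\textbf{u}-\nabla p=\beta(\textbf{u}\cdot\nabla)\textbf{u}$ and treat $\mathcal{F}:=\beta(\textbf{u}\cdot\nabla)\textbf{u}$ as the source in \eqref{D-R Brinkman system}; for this I must check that $(\textbf{w}\cdot\nabla)\textbf{w}\in\widetilde{H}^{-1}(\mathfrak{D},\mathbb{R}^{2})=(H^{1}(\mathfrak{D},\mathbb{R}^{2}))'$ for divergence-free $\textbf{w}\in H^{1}(\mathfrak{D},\mathbb{R}^{2})$. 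Using ${\rm div}\,\textbf{w}=0$ to write $(\textbf{w}\cdot\nabla)\textbf{w}={\rm div}(\textbf{w}\otimes\textbf{w})$, together with the two-dimensional Sobolev embedding $H^{1}(\mathfrak{D})\hookrightarrow L^{4}(\mathfrak{D})$ which yields $\textbf{w}\otimes\textbf{w}\in L^{2}(\mathfrak{D},\mathbb{R}^{2}\otimes\mathbb{R}^{2})$ with $\|\textbf{w}\otimes\textbf{w}\|_{L^{2}(\mathfrak{D})}\le C\|\textbf{w}\|_{H^{1}(\mathfrak{D},\mathbb{R}^{2})}^{2}$, one sees that $\textbf{v}\mapsto-\langle\textbf{w}\otimes\textbf{w},\nabla\textbf{v}\rangle_{\mathfrak{D}}$ is a bounded functional on $H^{1}(\mathfrak{D},\mathbb{R}^{2})$, and more generally that
\[
\|(\textbf{w}_{1}\cdot\nabla)\textbf{w}_{2}\|_{\widetilde{H}^{-1}(\mathfrak{D},\mathbb{R}^{2})}\le C_{\ast}\,\|\textbf{w}_{1}\|_{H^{1}(\mathfrak{D},\mathbb{R}^{2})}\|\textbf{w}_{2}\|_{H^{1}(\mathfrak{D},\mathbb{R}^{2})}
\]
for all divergence-free $\textbf{w}_{1},\textbf{w}_{2}\in H^{1}(\mathfrak{D},\mathbb{R}^{2})$.

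\textbf{Step 2.} Next I would define $\mathcal{T}:H^{1}(\mathfrak{D},\mathbb{R}^{2})\to H^{1}(\mathfrak{D},\mathbb{R}^{2})$ by letting $\mathcal{T}(\textbf{w})$ be the velocity component of $\mathcal{S}_{\alpha;\lambda}\big(\beta(\textbf{w}\cdot\nabla)\textbf{w},\textbf{f},\textbf{h}\big)$, so that $(\textbf{u},p)\in\mathcal{X}$ solves \eqref{Darcy-Brinkamn problem} precisely when $\mathcal{T}(\textbf{u})=\textbf{u}$ and $p$ is the associated pressure (which is forced by the uniqueness part of Theorem \ref{Th2.7}). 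The estimate in Theorem \ref{Th2.7} combined with Step 1 gives, with $\eta:=\|\textbf{f}\|_{H^{\frac12}(\Gamma_{D},\mathbb{R}^{2})}+\|\textbf{h}\|_{H^{-\frac12}(\Gamma_{R},\mathbb{R}^{2})}$,
\[
\|\mathcal{T}(\textbf{w})\|_{H^{1}(\mathfrak{D},\mathbb{R}^{2})}\le c\big(\beta C_{\ast}\|\textbf{w}\|_{H^{1}(\mathfrak{D},\mathbb{R}^{2})}^{2}+\eta\big),
\]
so that on $\overline{B}_{R}:=\{\textbf{w}\in H^{1}(\mathfrak{D},\mathbb{R}^{2}):\|\textbf{w}\|_{H^{1}(\mathfrak{D},\mathbb{R}^{2})}\le R\}$ the map $\mathcal{T}$ is a self-map as soon as $c\beta C_{\ast}R^{2}-R+c\eta\le 0$, which is solvable in $R$ (for instance $R=(1-\sqrt{1-4c^{2}\beta C_{\ast}\eta})/(2c\beta C_{\ast})$) provided $4c^{2}\beta C_{\ast}\eta\le 1$; I would take $C_{1}$ to be any constant forcing this, and note that $R\to 0$ as $\eta\to 0$, which delivers the a priori bound $\|\textbf{u}\|_{H^{1}(\mathfrak{D},\mathbb{R}^{2})}\le C_{2}:=R$.

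\textbf{Step 3.} I would then verify the contraction property: for $\textbf{w}_{1},\textbf{w}_{2}\in\overline{B}_{R}$, the bilinear identity $(\textbf{w}_{1}\cdot\nabla)\textbf{w}_{1}-(\textbf{w}_{2}\cdot\nabla)\textbf{w}_{2}=\big((\textbf{w}_{1}-\textbf{w}_{2})\cdot\nabla\big)\textbf{w}_{1}+\big(\textbf{w}_{2}\cdot\nabla\big)(\textbf{w}_{1}-\textbf{w}_{2})$, the linearity of $\mathcal{S}_{\alpha;\lambda}$, Theorem \ref{Th2.7} and Step 1 yield
\[
\|\mathcal{T}(\textbf{w}_{1})-\mathcal{T}(\textbf{w}_{2})\|_{H^{1}(\mathfrak{D},\mathbb{R}^{2})}\le 2c\beta C_{\ast}R\,\|\textbf{w}_{1}-\textbf{w}_{2}\|_{H^{1}(\mathfrak{D},\mathbb{R}^{2})},
\]
and after shrinking $C_{1}$ further so that $2c\beta C_{\ast}R<1$ (possible since $R\to 0$ with $\eta$), $\mathcal{T}$ becomes a contraction of $\overline{B}_{R}$. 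The Banach fixed point theorem then furnishes a unique $\textbf{u}\in\overline{B}_{R}$ with $\mathcal{T}(\textbf{u})=\textbf{u}$; putting $(\textbf{u},p):=\mathcal{S}_{\alpha;\lambda}\big(\beta(\textbf{u}\cdot\nabla)\textbf{u},\textbf{f},\textbf{h}\big)$ gives a solution of \eqref{Darcy-Brinkamn problem} in $\mathcal{X}$ with $\|\textbf{u}\|_{H^{1}(\mathfrak{D},\mathbb{R}^{2})}\le C_{2}$, and it is the only solution satisfying this bound, since any such solution is a fixed point of $\mathcal{T}$ in $\overline{B}_{C_{2}}$.

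\textbf{Step 4 and main obstacle.} For continuous dependence I would subtract the fixed-point identities for two admissible data pairs $(\textbf{f}_{i},\textbf{h}_{i})$ with solutions $(\textbf{u}_{i},p_{i})$, apply Theorem \ref{Th2.7} and the bilinear bound of Step 1 to the difference of the convective terms, move the resulting term $2c\beta C_{\ast}R\,\|\textbf{u}_{1}-\textbf{u}_{2}\|_{H^{1}(\mathfrak{D},\mathbb{R}^{2})}$ (which is $<\|\textbf{u}_{1}-\textbf{u}_{2}\|_{H^{1}(\mathfrak{D},\mathbb{R}^{2})}$) to the left-hand side, and thereby obtain Lipschitz dependence of $\textbf{u}$, and hence of $p$, on the data; the special case $\textbf{f}_{2}=\textbf{h}_{2}=\textbf{0}$, for which $(\textbf{u}_{2},p_{2})=(\textbf{0},0)$, then produces the estimate \eqref{estimate-D-B-F-new1-new1-D-Rn2-Lp}. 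The hard part will be Step 1, namely showing that the convective term lands in the smaller space $\widetilde{H}^{-1}(\mathfrak{D},\mathbb{R}^{2})$, and not merely in $H^{-1}(\mathfrak{D},\mathbb{R}^{2})$, with a quadratic bound, since this is exactly what makes $\mathcal{T}$ well-defined with values in $\mathcal{X}$; it hinges on the divergence-form representation granted by ${\rm div}\,\textbf{u}=0$ together with the two-dimensional embedding $H^{1}\hookrightarrow L^{4}$. Everything else is a routine pairing of the linear theory of Theorem \ref{Th2.7} with the contraction principle, the threshold $C_{1}$ being dictated by the two requirements $4c^{2}\beta C_{\ast}\eta\le 1$ and $2c\beta C_{\ast}R<1$.
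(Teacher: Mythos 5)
Your proposal is correct and follows essentially the same route as the paper: reduce \eqref{Darcy-Brinkamn problem} to the fixed-point equation for the velocity built from the linear Dirichlet--Robin solution operator $\mathcal{S}_{\alpha;\lambda}$ of Theorem \ref{Th2.7}, prove a quadratic bound for the convective term in $\widetilde{H}^{-1}(\mathfrak{D},\mathbb{R}^{2})$, and apply the Banach contraction principle on a small ball whose radius and the data threshold $C_{1}$ are dictated by the self-map and contraction inequalities. The only divergence is in your Step 1, where you use ${\rm div}(\textbf{w}\otimes\textbf{w})$ together with $H^{1}\hookrightarrow L^{4}$, while the paper applies H\"older with $H^{1}\hookrightarrow L^{6}$ to place $(\textbf{v}\cdot\nabla)\textbf{w}$ in $L^{3/2}(\mathfrak{D},\mathbb{R}^{2})\hookrightarrow\widetilde{H}^{-1}(\mathfrak{D},\mathbb{R}^{2})$; both yield the needed bilinear estimate, though the paper's route fixes the canonical extension of the convective term to $\widetilde{H}^{-1}(\mathfrak{D},\mathbb{R}^{2})$ (the one the conormal derivative in the Robin condition is taken with respect to), whereas your divergence-form functional is a different extension, differing from it by a boundary term $\langle(\textbf{w}\otimes\textbf{w})\nu,{\rm Tr}\,\textbf{v}\rangle_{\Gamma}$.
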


\begin{proof}
We use similar arguments to those developed in the proof of \cite[Theorem 5.2]{K-L-M-W} devoted to transmission type problems in ${\mathbb R}^3$ for the Stokes and Darcy-Forchheimer-Brinkman systems in Sobolev spaces. 

By using the continuity of the embeddings
\begin{equation}
\label{embbeding}
H^{1}(\mathfrak{D},\mathbb{R}^{2}) \hookrightarrow L^{6}(\mathfrak{D},\mathbb{R}^{2}) \hookrightarrow L^{2}(\mathfrak{D},\mathbb{R}^{2}),
\end{equation}
and by the H\"{o}lder inequality, we obtain the estimates (\cite[Theorem 4.1]{Grosan})
\begin{align}
\label{3.0.3}
\| (\textbf{v} \cdot \nabla){\bf w}\ \|_{L^{\frac{3}{2}}(\mathfrak{D},\mathbb{R}^{2})}
\leq \|{\bf v}\|_{L^{6}(\mathfrak{D},\mathbb{R}^{2})}\|{\nabla \bf w}\|_{L^{2}(\mathfrak{D},\mathbb{R}^{2})} 
\leq c_0\|{\bf v}\|_{H^{1}(\mathfrak{D},\mathbb{R}^{2})}\|{\bf w}\|_{H^{1}(\mathfrak{D},\mathbb{R}^{2})},
\end{align}
with some constant $c_{0}\equiv c_{0}(\mathfrak{D})>0$. Then a duality argument based on \eqref{3.0.3}, we deduce that
\begin{equation}
\label{3.0.5-Lp}
({\bf v} \cdot \nabla ){\bf w} \in  L^{\frac{3}{2}}(\mathfrak{D},\mathbb{R}^{2}) \hookrightarrow \widetilde{H}^{-1}(\mathfrak{D},\mathbb{R}^{2}),\ \ \forall \ {\bf v},\, {\bf w}\in  H^{1}(\mathfrak{D},\mathbb{R}^{2}).
\end{equation}

Let us consider a fixed ${\mathbf v}\in H^{1}(\mathfrak{D},\mathbb{R}^{2})$ and the corresponding linear Poisson problem for the Brinkman system
\begin{equation}
\label{V-v0-1}
\left\{
\begin{array}{l}
\triangle {{\bf v}^0}-\alpha {{\bf v}^0}-\nabla {p^0}=
 \beta ({\bf v} \cdot \nabla ){\bf v}, \ \rm{div} \ \textbf{v}^{0} = 0, \ \mbox{ in }\ {\mathfrak D},\\ 
\left({\rm{Tr}} \ {{\bf v}^0}\right)|_{\Gamma_{D}}
={\bf f}\in H^{\frac{1}{2}}(\Gamma_{D},{\mathbb R}^2),\\
\left(\partial _{\nu ;\alpha }\left({{\bf v}^0},{p^0}\right)_{\beta ({\bf v} \cdot \nabla ){\bf v}}\right)|_{\Gamma_{R}}   + \lambda \left({\rm{Tr}}\ {\bf v}^{0}\right)|_{\Gamma_{R}} ={\bf h}\in H^{-\frac{1}{2}}(\Gamma_{R},{\mathbb R}^2),
\end{array}\right.
\end{equation}
with unknown fields $({\bf v}^0,p^0)\in \mathcal{X}$.
Theorem \ref{Th2.7} asserts that the problem \eqref{V-v0-1} with given data $\left(\textbf{v} \cdot \nabla \textbf{v},{\bf f},{\bf h}\right)\in \mathcal{Y}$ has a unique solution given by the continuous linear operator ${\mathcal S}_{\alpha;\lambda}:\mathcal{Y} \to \mathcal{X}$ of \eqref{Salphalambda}, i.e.,
\begin{align}
\label{solution-v0-Lp}
\left({\bf v}^0,p^0\right):=\left({\mathscr U}({\bf v}),{\mathscr P}({\bf v})\right)
={\mathcal S}_{\alpha;\lambda} \left(\beta ({\bf v} \cdot \nabla ){\bf v},\ {\bf f},\ {\bf h}\right)\in {\mathcal X},
\end{align}
Therefore, the nonlinear operators $({\mathscr U},{\mathscr P}):H^{1}(\mathfrak{D},\mathbb{R}^{2})\to {\mathcal X}$ defined for fixed boundary data $({\bf f},{\bf h})\in \mathcal{B}_{DR}$ satisfy the system
\begin{equation}
\label{Newtonian-D-B-F-4}
\left\{\begin{array}{lll}
\triangle {\mathscr U}({\bf v})-\alpha {\mathscr U}({\bf v})-\nabla {\mathscr P}({\bf v})=\beta(\textbf{v} \cdot \nabla){\bf v}\in H^{-1}({\mathfrak D},{\mathbb R}^{2}),\\
\left({\rm{Tr}}\left({{\mathscr U}({\bf v})}\right)\right)|_{\Gamma_{D}}={\bf f}\in H^{\frac{1}{2}}(\Gamma_{D},{\mathbb R}^{2}),\\
\left(\partial _{\nu ;\alpha }\left({\mathscr U}({\bf v}),{\mathscr P}({\bf v})\right)|_{\beta ({\bf v} \cdot \nabla ) {\bf v}}\right)\!|_{\Gamma_{R}}\!\!+ \lambda \left({\rm{Tr}}\left({{\mathscr U}({\bf v})}\right)\right)\!|_{\Gamma_{R}}\!\!={\bf h}\in H^{-\frac{1}{2}}(\Gamma_{R},{\mathbb R}^{2}).
\end{array}\right.
\end{equation}
and they are continuous and bounded, i.e., there is a constant\footnote{${\mathcal L}(\mathcal Y,\mathcal X):=\left\{T:\mathcal Y\to \mathcal X:T \mbox{ is a linear and continuous operator}\right\}$.} $c_{s}=\| \mathcal{S}_{\alpha;\lambda}\|_{{\mathcal L}(\mathcal Y,\mathcal X)}$ such that
\begin{align}
\label{estimate-D-B-F-3}
\big\|\big({\mathscr U}({\bf w}),{\mathscr P}({\bf w})\big)\big\|_{{\mathcal X}}&\leq c_{s}\|\left(\beta (\textbf{w} \cdot \nabla){\bf w},{\bf f},{\bf h}\right)\|_{{\mathcal Y}}\\
&\leq c_{s}\left(\|\left({\bf f},{\bf h}\right)\|_{\mathcal{B}_{DR}}+\beta \|\ (\textbf{w} \cdot \nabla){\bf w}\ \|_{L^{\frac{3}{2}}({\mathfrak D},{\mathbb R}^{2})}\right)\nonumber\\
&\leq c_{s}\|\left({\bf f},{\bf h}\right)\|_{\mathcal{B}_{DR}}
+c_{s}c_{0}\beta\|{\bf w}\|_{H^{1}(\mathfrak{D},\mathbb{R}^{2})}^2,
\ \ \forall\ {\bf w}\in H^{1}(\mathfrak{D},\mathbb{R}^{2}),\nonumber
\end{align}
with the constant  $c_{0}\equiv c_0({\mathfrak D})>0$ from inequality \eqref{3.0.3}. 

Thus, a fixed point ${\bf u}\in H^{1}(\mathfrak{D},\mathbb{R}^{2})$ of the nonlinear operator $\mathscr{U}$   together with the pressure function $p ={\mathscr P}({\bf u})$ determine a solution of the nonlinear mixed problem \eqref{Darcy-Brinkamn problem} in the space ${\mathcal X}$.
First, we determine a radius $\eta_{p}$, i.e., a closed ball $\mathbf{B}_{\eta} :=\{{\bf w}\in H^{1}(\mathfrak{D},\mathbb{R}^{2}):\|{\bf w}\|_{H^{1}(\mathfrak{D},\mathbb{R}^{2})}\leq \eta \}$ for which the operator $\mathscr{U}$ is invariant. To this end, let us consider the constants
\begin{align}
\label{Newtonian-D-B-F-new9n-Lp}
&\zeta:=\frac{2}{9\beta c_{0}c_{s}^2}>0,\ \ \eta:=\frac{1}{3\beta c_{0}c_{s}}>0
\end{align}
and assume that the given data satisfy the inequality $\|\left({\bf f},{\bf g}\right)\|_{\mathcal{B}_{DR}}\leq \zeta$. Introducing these estimates into (\ref{estimate-D-B-F-3}), we deduce that
\begin{align}
\label{Newtonian-D-B-F-new9-new-crackn-Lp}
\|\left({{\mathscr U}}({\bf w}),{\mathscr P}({\bf w})\right)\|_{{\mathcal X}}\leq \frac{1}{3\beta c_{0}c_{s}}= \eta,\ \forall \ {\bf w}\in \mathbf{B}_{\eta },
\end{align}
i.e., $\|{\mathscr U}({\bf w})\|_{H^{1}(\mathfrak{D},\mathbb{R}^{2})}\leq \eta$ for any ${\bf w}\in \mathbf{B}_{\eta}$. Consequently, ${\mathscr U}$ maps $\mathbf{B}_{\eta}$ to $\mathbf{B}_{\eta}$.

Next, the linearity and continuity of the operator ${\mathcal S}_{\alpha;\lambda}$, and inequality \eqref{3.0.3}, shows that the operator $\mathcal{U}$ is a contraction, 
\begin{align}
\label{4.3}
\|{\mathscr U}({\bf v})-{\mathscr U}({\bf w})\|_{H^{1}(\mathfrak{D},\mathbb{R}^{2})}
&\leq \|{\mathcal S}_{\alpha;\lambda}\left(\beta ({\bf v} \cdot \nabla){\bf v}-\beta ({\bf w} \cdot \nabla){\bf w},\ {\bf 0},\ {\bf 0}\right)\|_{H^{1}(\mathfrak{D},\mathbb{R}^{2})}\nonumber\\
&\leq c_{s}\beta \|\ ({\bf v} \cdot \nabla){\bf v}-({\bf w} \cdot \nabla){\bf w}\ \|_{\widetilde{H}^{-1}({\mathfrak D},{\mathbb R}^{2})}\nonumber
\\
&\leq c_{s}\beta \|\ (({\bf v} - {\bf w}) \cdot \nabla){\bf v}+({\bf w} \cdot \nabla)({\bf v} - {\bf w} )\ \|_{\widetilde{H}^{-1}({\mathfrak D},{\mathbb R}^{2})}  \nonumber 
\\
& \leq 2\eta c_{s}c_{0} \beta\|{\bf v}-{\bf w}\|_{H^{1}(\mathfrak{D},\mathbb{R}^{2})}\nonumber
\\
&=\frac{2}{3}\|{\bf v}-{\bf w}\|_{H^{1}(\mathfrak{D},\mathbb{R}^{2})},
\ \forall\ {\bf v},{\bf w}\in \mathbf{B}_{\eta}.
\end{align}
where $c_{s} = \| {\mathcal S}_{\alpha;\lambda} \| $ as in \eqref{estimate-D-B-F-3} and $c_{0}$ as in \eqref{3.0.3}. Hence, ${\mathscr U}$ is a contraction on $\mathbf{B}_{\eta}$.
Therefore the Banach-Caccioppoli fixed point theorem implies that there exists a unique fixed point ${\bf u}\in \mathbf{B}_{\eta}$ of ${\mathscr U}$, i.e., ${\mathscr U}({\bf u})={\bf u}$, which determines together with the pressure field $p ={\mathscr P}({\bf u})$, given by \eqref{solution-v0-Lp}, a solution of the nonlinear problem \eqref{Darcy-Brinkamn problem} in $\mathcal{X}$.

Since the solution satisfies the condition ${\bf u}\in \mathbf{B}_{\eta }$, by inequality \eqref{estimate-D-B-F-3} we obtain the estimate
\begin{align}
\label{estimate-D-B-F-5}
\|{\bf u}\|_{H^{1}(\mathfrak{D},\mathbb{R}^{2})}+
\|p\|_{L^{2}(\mathfrak{D})}\leq c_{s}\big\|\big({\bf f},{\bf h}\big)\big\|_{\mathcal{B}_{DR}}
+\frac{1}{3}\|{\bf u}\|_{H^{1}(\mathfrak{D},\mathbb{R}^{2})},
\end{align}
which implies that $ \|{\bf u}\|_{H^{1}(\mathfrak{D},\mathbb{R}^{2})}\leq
\dfrac{3}{2}c_{s}\|({\bf f},{\bf h})\|_{\mathcal{B}_{DR}} $.  This estimates together with \eqref{estimate-D-B-F-5}, yields
\begin{align}
\|{\bf u}\|_{H^{1}(\mathfrak{D},\mathbb{R}^{2})}+
\|p\|_{L^{2}({\mathfrak D})}\leq \frac{3}{2}c_{s}\big\|\big({\bf f},{\bf h}\big)\big\|_{\mathcal{B}_{DR}},\nonumber
\end{align}
i.e., just the inequality \eqref{estimate-D-B-F-new1-new1-D-Rn2-Lp} with the constant $C\equiv \dfrac{3}{2}c_{s}=\dfrac{3}{2}\|{\mathcal S}_{\alpha;\lambda}\|_{{\mathcal L}(\mathcal Y,\mathcal X)}$.

In order to prove the uniqueness of the solution $\left({\bf u},p \right)\in {\mathcal X}$, satisfying $\textbf{u} \in \mathbf{B}_{\eta} $, we consider that $\left({\bf u}',p '\right)\in \mathcal{X}$ is another solution of the problem \eqref{Darcy-Brinkamn problem}, such that $\textbf{u}' \in \mathbf{B}_{\eta} $.
Hence, ${\mathscr U}({\bf u}')\in \mathbf{B}_{\eta}$, such that $\left({\mathscr U}({\bf u}'),{\mathscr P}({\bf u}')\right)$  satisfy (\ref{Newtonian-D-B-F-4}) with $\textbf{u}'$ in place on $\textbf{v}$. Substracting the two systems in $(\textbf{u}',p')$ and $(\mathcal{U}(\textbf{u}'), \mathcal{P}(\textbf{u}'))$, we obtain the linear problem given by 
\begin{equation}
\left(\left({\mathscr U}({\bf u}')-{\bf u}'\right),\left({{\mathscr P}({\bf u}')}-p '\right)\right)={\mathcal S}_{\alpha;\lambda}({\mathbf 0},{\mathbf 0},{\mathbf 0}),
\end{equation}
which has only the trivial solution in the space ${\mathcal X}$, i.e., $\textbf{u}'$ is a fixed point of the operator
${\mathscr U}$.  Since ${\mathscr U}:\mathbf{B}_{\eta }\to \mathbf{B}_{\eta }$ is a contraction, it has a unique fixed point $\textbf{u}$ in $\mathbf{B}_{\eta}$. Consequently, $\textbf{u}'=\textbf{u}$, and, in addition, $p'=p $.
\end{proof}

\section{Application of DRBEM}
In Section 2 we have obtained the well-posedness result of the Dirichlet-Robin boundary value problem for the nonlinear Darcy-Forchheimer-Brinkman system (\ref{Darcy-Brinkamn problem}) in a two-dimensional Lipschitz domain $\mathfrak{D} \subset \mathbb{R}^{2} $. Next, we consider a numerical application of Theorem \ref{Theorem 3.2}, i.e., the lid-driven cavity flow problem. 

\subsection{Vorticity-streamfuntion approach for the nonlinear Darcy-Forchheimer-Brink\-man system}
The nonlinear Darcy-Forchheimer-Brinkman system \eqref{Darcy-Brinkamn problem} describes the flow of an incompressible, viscous fluid located in a square cavity filled with a saturated porous domain. Under the tangential motion of the upper wall, the fluid inside the cavity rotates until it arrives at an equilibrium state that is described by the equations in (\ref{Darcy-Brinkamn problem}). The numerical problem under investigation is solved by using the Dual Reciprocity Boundary Element Method (DRBEM). 

Let $ \textbf{u} (x,y) = (u(x,y),v(x,y)) $ be the velocity field of the flow and $ p = p(x,y) $ the corresponding pressure field. Then the projection of the nonlinear Darcy-Forchheimer-Brinkman consists of the following equations
\begin{equation}
\left\{
\begin{split}
\label{Ox}
&Ox:\quad \Big(\frac{\partial^{2} u}{\partial x^{2}} + \frac{\partial^{2} u}{\partial y^{2}}\Big) - \alpha u - \frac{\partial p}{\partial x}= \beta \Big( u\frac{\partial u}{\partial x} + v \frac{\partial u}{\partial y}\Big), \\
&Oy:\quad \Big(\frac{\partial^{2} v}{\partial x^{2}} + \frac{\partial^{2} v}{\partial y^{2}}\Big) - \alpha v - \frac{\partial p}{\partial y}= \beta \Big( u\frac{\partial v}{\partial x} + v \frac{\partial v}{\partial y}\Big), 
\end{split}
\quad  \frac{\partial u}{\partial x} + \frac{\partial v}{\partial y} = 0.
\right.
\end{equation}

The structure of the fluid flow through the porous media is related to four parameters that describe the physical properties of the fluid and the media, which are \textit{the Reynolds number} $Re$, \textit{the Darcy number} $Da$, \textit{the porosity} $\phi$ and the \textit{viscosity coefficient} $\Lambda$ (for the physical description of these parameters see, e.g., \cite{Ni-Be}), which are related to the mathematical parameters $\alpha$ and $\beta$ through the relations:
\begin{equation}
\alpha = \dfrac{\phi}{Da \Lambda}, \qquad \beta = \dfrac{Re}{\phi \Lambda}.
\end{equation}

In order to implement the mathematical model from Theorem \ref{Theorem 3.2}, we consider the stream function $\Psi$ and the vorticity $\omega$ defined for two-dimensional flows given by
\begin{equation}
\label{stream-vor}
\frac{\partial \Psi}{\partial x} = -v, \ \frac{\partial \Psi}{\partial y} = u, \quad  \mbox{respectively}  \quad  \omega = \frac{\partial v}{\partial x}  -\frac{\partial u}{\partial y}. 
\end{equation}

For simplicity, we consider a non-dimensional form for the nonlinear Darcy-Forchheimer-Brinkman system, i.e., the square cavity has dimension one and the velocity of the moving wall is scaled to unity (for details the reader is referred to \cite[Section 4.1]{Gutt}). Eliminating the pressure field from equations \eqref{Ox} and introducing the stream function and the vorticity (see, e.g., \cite[pp. 307-309]{Ni-Be}), we obtain the following system 
\begin{equation}
\label{non-System}
\left\{
\begin{array}{l}
\dfrac{\partial^{2} \omega}{\partial X^{2}} + \dfrac{\partial^{2} \omega}{\partial Y^{2}} =  \alpha \omega + \beta \Bigg[\dfrac{\partial \Psi}{\partial Y}\dfrac{\partial \omega}{\partial X} - \dfrac{\partial \Psi}{\partial X}\dfrac{\partial \omega}{\partial Y} \Bigg] \\
\dfrac{\partial^{2} \Psi}{\partial X^{2}} + \dfrac{\partial^{2} \Psi}{\partial Y^{2}} = -\omega,
\end{array}
\right.
\end{equation}
and the corresponding non-dimensional mixed Dirichlet-Robin boundary conditions
\begin{equation}
\label{Boundary}
\Psi = 0 \quad \mbox{on} \ \Gamma_{1}, \Gamma_{2} \mbox{ and } \Gamma_{3}, \qquad  \frac{\partial \Psi}{\partial Y} = 1 + S \frac{\partial^{2} \Psi}{\partial Y^{2}} \quad \mbox{on} \ \Gamma_{4}.
\end{equation}

\subsection{DRBEM Method}
The BEM transforms differential equations into integral equations defined on the boundary (and most of the times also on the domain). The non-dimensional form of the nonlinear Darcy-Forchheimer-Brinkman system \eqref{non-System} is weighted through the domain $\mathfrak{D}$, by the fundamental solutions of the Laplace operator in two dimensions (see, e.g., \cite{Brebbia-Telles})
\begin{equation}
u^{*} = \frac{1}{2}\log \left( \frac{1}{r} \right), \quad u_{q}^{*} = \frac{\partial u^{*}}{\partial \nu} 
\end{equation}
where $r = |\vec{r} - \vec{r}_{i}|$ is the distance function between the source point (fixed) and the field point (variable) \cite[Section 3.3.2]{Brebbia-Wrobel}. Note that the subscript $(\cdot)_{q}$ refers to the normal derivative. Applying Green's second identity to the above system, we obtain for each source point $i$ the relations 
\begin{equation}
\label{Bem1}
\left\{
\begin{array}{ll}
c_{i}\psi_{i} + \int_{\Gamma}u^{*}_{q}\psi d\Gamma - \int_{\Gamma}u^{*}\psi_{q} d\Gamma = \int_{\mathfrak{D}}(-\omega)d\mathfrak{D} \\
c_{i}\omega_{i} + \int_{\Gamma}u^{*}_{q}\omega d\Gamma - \int_{\Gamma}u^{*}\omega_{q} d\Gamma = \int_{\mathfrak{D}} \alpha \omega + \beta \left(\frac{\partial \psi}{\partial Y}\frac{\partial \omega}{\partial X} - \frac{\partial \psi}{\partial X} \frac{\partial \omega}{\partial Y} \right)d\mathfrak{D}
\end{array}
\right.
\end{equation}
For our simulation we have considered linear boundary elements, with the specification that we take account of the left and right boundary unit normal for the corner points.

The matrices $\textbf{H}$ and $\textbf{G}$ determine the field contribution of the source node $i$ to the boundary node $j$, defined by
\begin{equation}
H_{ij} = \int_{\Gamma_{j}} u_{q}^{*} d\Gamma, \quad G_{ij} = \int_{\Gamma_{j}} u^{*}d\Gamma
\end{equation} 
The elements $G_{ij} $ represent the flux generated by the source point $i$ around the boundary node $j$. Note that the diagonal values of $\textbf{H}$  require special attention. The $H_{ii}$ values are $c_{i} = \frac{1}{2}$ and $c_{i} = \frac{1}{4}$ for the corner points, since we have employed a linear boundary elements. The diagonal terms of $\textbf{G}$ are computed using MATLAB's integral function.

The Dual Reciprocity boundary element method transforms the domain integral into boundary integrals by expanding the nonhomogenities in terms of radial basis functions $f_{j}'s$ (coordinate functions) of the $N$ boundary nodes and the selected $L$ interior nodes, i.e., 
\begin{align}
-\omega \approx \sum_{j=1}^{N+L} \alpha_{j}f_{j}, \quad
\alpha \omega + \beta \left(\frac{\partial \psi}{\partial Y}\frac{\partial \omega}{\partial X} - \frac{\partial \psi}{\partial X} \frac{\partial \omega}{\partial Y} \right) \approx \sum _{j = 1}^{N+L} \alpha^{*}_{j}f_{j}
\end{align}

The radial basis functions are linked to the particular solution of each equation with the Laplace operator, i.e., $\Delta \hat{u} = f$, where $\hat{u}$ is the particular solution. For our case, we have considered the radial basis functions to be $  f =  r $, since it produces the $F$ matrix with the lowest conditioning number from the commonly used radial basis functions. Introducing these expansions in \eqref{Bem1}, we obtain by the second Green formula the identities
\begin{equation}
\label{Bem3}
\left\{
\begin{array}{ll}
\textbf{H}\psi - \textbf{G}\psi_{q}= \sum_{j=1}^{N+L} \alpha_{j} \left( c_{i}\hat{u}_{ji} + \int_{\Gamma}\psi^{*}_{q}\hat{u}_{j} d\Gamma - \int_{\Gamma}\psi^{*}\hat{q}_{j}  d\Gamma \right)\\ \\
\textbf{H}\omega - \textbf{G}\omega_{q} = \sum_{j=1}^{N+L} \alpha^{*}_{j} \left( c_{i}\hat{u}_{ji} + \int_{\Gamma}\omega^{*}_{q}\hat{u}_{j} d\Gamma - \int_{\Gamma}\omega^{*}\hat{q}_{j}  d\Gamma \right)
\end{array}
\right.
\end{equation}
which can be written in matrix form as the following (see, e.g., \cite{Bozida-2004} and \cite{Gumgum-BE})
\begin{equation}
\label{Bem5}
\left\{
\begin{array}{ll}
\textbf{H}\psi - \textbf{G}\psi_{q}= (\textbf{H}\hat{\textbf{U}} - \textbf{G}\hat{\textbf{Q}}) \textbf{F}^{-1}(-\omega)\\
\textbf{H}\omega - \textbf{G}\omega_{q} = (\textbf{H}\hat{\textbf{U}} - \textbf{G}\hat{\textbf{Q}})\textbf{F}^{-1} \left( \alpha \omega + \beta \left( \frac{\partial \psi}{\partial Y}\frac{\partial \omega}{\partial X} - \frac{\partial \psi}{\partial X} \frac{\partial \omega}{\partial Y} \right) \right),
\end{array}
\right.
\end{equation}
where $\textbf{F}$ is a $(N+L)\times (N+L)$ matrix containing the coordinate functions evaluated at all points. The matrices $\hat{\textbf{U}}$ and $\hat{\textbf{Q}}$ are constructed by taking the corresponding particular solutions and normal derivatives of particular solutions as columns. The nonhomogenities are approximated by the DRBEM idea as
\begin{align}
\label{BemIdea}
\frac{\partial \psi}{\partial X} = \frac{\partial \textbf{F}}{\partial X} \textbf{F}^{-1}\psi,\ \ \frac{\partial \psi}{\partial Y} = \frac{\partial \textbf{F}}{\partial Y} \textbf{F}^{-1}\psi, \ \ 
\frac{\partial \omega}{\partial X} = \frac{\partial \textbf{F}}{\partial X} \textbf{F}^{-1}\omega, \ \ \frac{\partial \omega}{\partial Y} = \frac{\partial \textbf{F}}{\partial Y} \textbf{F}^{-1}\omega.
\end{align}
Applying \eqref{BemIdea} to the system of equations \eqref{Bem5} and making the following notations\footnote{ Note that the multiplication of $\textbf{F}_{y}\textbf{F}^{-1}\psi$ and $\textbf{F}_{x}\textbf{F}^{-1}\psi$ has to be put into a diagonal matrix in order to evaluate \textbf{NonL}} 
\begin{equation}
\textbf{S} = (\textbf{H}\hat{\textbf{U}} - \textbf{G}\hat{\textbf{Q}}) \textbf{F}^{-1}, \ \textbf{NonL} = \beta\left( \textbf{F}_{y}\textbf{F}^{-1}\psi \textbf{F}_{x}\textbf{F}^{-1}\omega - \textbf{F}_{x}\textbf{F}^{-1}\psi \textbf{F}_{y}\textbf{F}^{-1}\omega \right), \nonumber
\end{equation}
we obtain the following system that has to be solved in a recursive way
\begin{equation}
\label{Bem7}
\left\{
\begin{array}{ll}
\textbf{H}\psi - \textbf{G}\psi_{q}= \textbf{S}(-\omega)\\
\textbf{H}\omega - \textbf{G}\omega_{q} = \textbf{S}\left( \alpha \omega + \textbf{NonL} \right).
\end{array}
\right.
\end{equation}
For more details about DRBEM, we refer the reader to the books \cite{Brebbia-Telles} and \cite{Brebbia-Wrobel}.

\subsection{Integration scheme}
In order to increase the numerical stability of the system, we add the time derivative of the vorticity $\partial \omega/ \partial t$ to the second equation of (\ref{Bem7}). This term is discretized by forward difference numerical procedure, which leads to the relation (see, e.g., \cite{Gumgum-BE})
\begin{equation}
\label{dO}
\dfrac{\partial \omega}{\partial t} = \dfrac{\omega^{m+1} - \omega^{m}}{\Delta t}
\end{equation}
The steady state of the flow is obtained at a tolerance error of $tol = 10^{-6}$. In order to accelerate the convergence to steady state and to overcome stability problems a relaxation procedure is used for $\omega$ in the following form
\begin{equation}
\label{rO}
\omega = \gamma \omega^{m+1} + (1 - \gamma)\omega^{m},\quad \omega_{q} = \gamma_{q} \omega_{q}^{m+1} + (1 - \gamma_{q})\omega_{q}^{m}.
\end{equation}
By introducing the relations (\ref{dO}) and (\ref{rO}) into the second equation from (\ref{Bem7}) and rearranging the expression, we will get the final equation of the form:
\begin{align}
\label{finalSimEq}
&\left[ -\dfrac{\textbf{S}}{\Delta t} + \gamma\textbf{H} - \beta  \gamma \textbf{S} \ \textbf{NonL} - \alpha \gamma \textbf{S} \right] \omega^{m+1}  - \gamma_{q} \textbf{G} \omega_{q}^{m+1} =  \\
&\left[ - \dfrac{\textbf{S}}{\Delta t} - (1-\gamma) \textbf{H} + \beta  (1-\gamma)\textbf{S} \ \textbf{NonL} + \alpha (1-\gamma) \textbf{S} \right] \omega^{m}  - (1-\gamma_{q}) \textbf{G} \omega_{q}^{m}. \nonumber 
\end{align}

The following criteria is used to check the convergence of the method
\begin{equation}
\frac{|\psi^{m+1} - \psi^{m}|}{|\psi^{m}|} \le \varepsilon \ \ \mbox{and} \ \quad \frac{|\omega^{m+1} - \omega^{m}|}{|\omega^{m}|} \le \varepsilon
\end{equation}
where $ \varepsilon $ is a prescribed error, which was taken $ 10^{-6} $.

Note that the Dirichlet boundary values for the stream function are known, i.e., they are equal to zero over the whole boundary. The iteration cycle performed for the calculation of the solution is as follows:
\begin{enumerate}
\item[(i)] First, we evaluate the Neumann boundary conditions for the stream function from the last condition in \eqref{Boundary} by the DRBEM approximation \eqref{BemIdea}.
\item[(ii)] By the first relation of \eqref{Bem7}, we obtain the boundary values for the vorticity and the stream function values for the interior nodes.
\item[(iii)] Equation \eqref{finalSimEq}, reduced to a matrix form $\textbf{A}x = \textbf{b}$, leads to the vorticity values for the interior domain and the Neumann boundary values on the frontier.
\item[(iv)] Jump to step $(i)$ until the convergence error is less than $10^{-6}$.
\end{enumerate}

\section{Numerical results and discussions}
\subsection{Numerical stability}
A fundamental role in the stability of the numerical method used in simulations is played by the discretization of the domain (see,.e.g., \cite{Erturk}). In order to investigate the numerical stability of this method, we first consider the Navier-Stokes system ( $\alpha = 0$, $Re = 100$, $\phi = 1$ ) without any sliding on the upper boundary ($S = 0$). Table \ref{T2} shows the dependency of the maximal absolute value of the stream function regarding the number of boundary elements $N$ and the interior nodes $L = K \times K$. The grid size chosen for the simulations is $N = 320$ and $K = 39$, since the last two diagonal iterations from Table \ref{T2} are at a smaller distance than $\varepsilon = 10^{-4}$. In order to obtain smooth plots, we consider a triangulation-based cubic interpolation over the $L$ interior points of $1000 \times 1000$ points.
\begin{table}[h]
\centering
    \begin{tabular}{| c | c | c | c | c | c | c |}
    \hline
    $|\psi_{max}|$  & $N = 200$ & $N = 256$ & $N = 320$ & $N = 360$   \\ \hline
     $K = 24$     & 0.103866 & 0.104028 & 0.103504 & 0.103426      \\ \hline
     $K = 31$     & 0.103959 & 0.103857 & 0.103653 & 0.103567      \\ \hline
     $K = 39$     & 0.103892 & 0.103751 & 0.103638 & 0.103573      \\ \hline
     $K = 44$     & 0.103988 & 0.103845 & 0.103738 & 0.103681      \\ \hline
    \end{tabular}
\caption{Grid dependency: Maximum value of the stream function for the Navier-Stokes system $ |\psi_{max}| $ for $ Re = 100 $, $ \phi = 1 $ and $ S = 0 $.}
\label{T2}
\end{table}

Moreover, the streamfunction values for the Navier-Stokes system are compared to other results in the literature related to the two-dimensional lid-driven fluid flow problem.  Table \ref{TableComp} presents the results of our numerical simulations for different Reynolds numbers, and the results obtained in \cite{Gutt}, \cite{Gupta} and \cite{Marchi}. 
\begin{table}[h]
\small
\centering
    \begin{tabular}{| c | c | c | c |}
    \hline
    $|\psi_{max}|$& $Re = 10$ & $Re = 100$ & $Re = 1000$  \\ 
    (center) & & & \\ \hline
    Present Paper & 0.1001 & 0.1036 & 0.1187    \\ 
    &(0.5175, 0.7658) & (0.6136, 0.7367) & (0.5275, 0.5715) \\ \hline
    Gutt and Gro\c{s}an \cite{Gutt} & 0.1000 & 0.1034 & - \\ 
     & (0.51, 0.77) & (0.615, 0.74) & -  \\ \hline
    Marchi et al. \cite{Marchi} & 0.1001 & 0.1035 & 0.1189  \\ 
    & (0.516, 0.7646) & (0.616, 0.737) & (0.531, 0.565) \\ \hline
    Erturk  et al. \cite{Erturk} & - & 0.1035 & 0.1187 \\ 
    & - & (0.6152, 0.7363) & (0.5313, 0.5645)  \\ \hline
    \end{tabular}
    \caption{Comparison to classical results for the Navier-Stokes system.}
\label{TableComp}
\end{table}

The last stability analysis is a comparison between the numerical results obtained by using a Gauss-Seidel iterative scheme in \cite{Gutt} (see \cite{Hoffmann} for more details) and the simulations using the DRBEM approach shown in Figure \ref{BEM_GS}. The fluid and the porous medium parameters for the nonlinear Darcy-Forchheimer-Brinkman system chosen are $Re = 100$, $Da = 0.25$, $\varphi = 0.2$ and $\Lambda = 1$.

\begin{figure}[h]
\centering
\DeclareGraphicsExtensions{.png}
\begin{minipage}{.32\textwidth}
  \centering
  $ BEM $ mesh \vspace*{-0.3cm}
  \includegraphics[width=\linewidth]{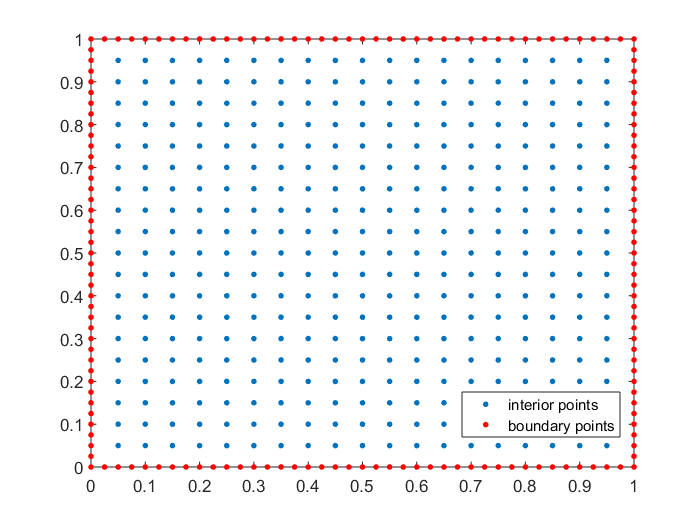}
  $ \bullet $ interior nodes $L$,\\ 
  $ \bullet$ boundary nodes $N$
  \label{fig:sfig14}
\end{minipage}
\begin{minipage}{.32\textwidth}
  \centering
  DRBEM \vspace*{-0.3cm}
  \includegraphics[width=\linewidth]{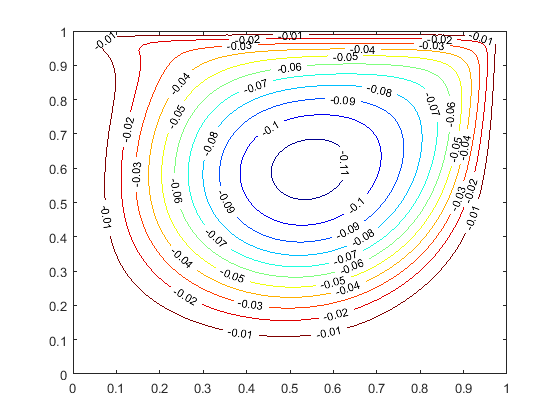}
  $|\psi_{max}| = 0.1132$, \\ 
  $C = (0.5405,0.5603)$
  \label{fig:sfig11}
\end{minipage}%
\begin{minipage}{.32\textwidth}
  \centering
  Gauss-Seidel iteration \vspace*{-0.3cm}
  \includegraphics[width=\linewidth]{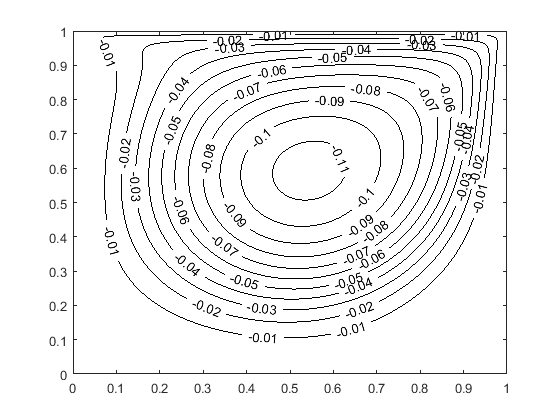}
  $|\psi_{max}| = 0.1139$, \\
  $C = (0.5450,0.5950)$
  \label{fig:sfig12}
\end{minipage}
\caption{ (a) Domain discretization, (b)-(c) Comparison of the fluid flow in a porous medium in the case of the lid-driven problem the Reynolds numbers $ Re=100 $, $Da = 0.25$, $\varphi = 0.2$.}\label{BEM_GS}
\end{figure}

\subsection{The lid-driven flow in the absence of the sliding parameter}
In the next section, we analyse the structure of the streamlines for some representative fluid parameters ($Re$ and $Da$) in the absence of the sliding parameter, i.e., $S = 0$. Thus we have no sliding of the fluid in the vicinity of the boundary, corresponding to Dirichlet boundary conditions. First, we analyze the dependency of the streamlines for different values of the Reynolds number ($Re = 10, 100, 1000$).  We have set the remaining parameters to the values $\phi = 0.5$, $Da = 0.25$, $\Lambda = 1$. 

\begin{figure}[H]
\centering
\begin{minipage}{.32\textwidth}
  \centering
  $ Re $ = 10 \vspace*{-0.3cm}
  \includegraphics[width=\linewidth]{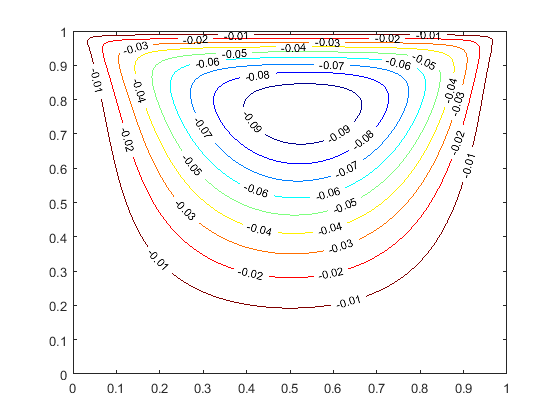}
  $|\psi_{max}|=0.0984$,\\ 
  $C=(0.5315,0.7667)$
  \label{fig:sfig14}
\end{minipage}
\begin{minipage}{.32\textwidth}
  \centering
  $Re=100$ \vspace*{-0.3cm}
  \includegraphics[width=\linewidth]{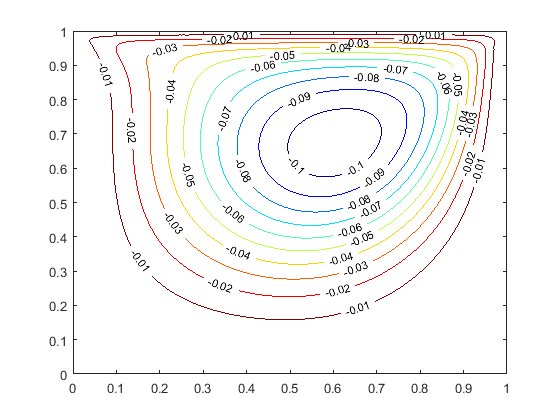}
  $|\psi_{max}|=0.1066$,\\ 
  $C=(0.6036,0.6736)$
  \label{fig:sfig13}
\end{minipage}
\begin{minipage}{.32\textwidth}
  \centering
  $ Re $ = 1000 \vspace*{-0.3cm}
  \includegraphics[width=\linewidth]{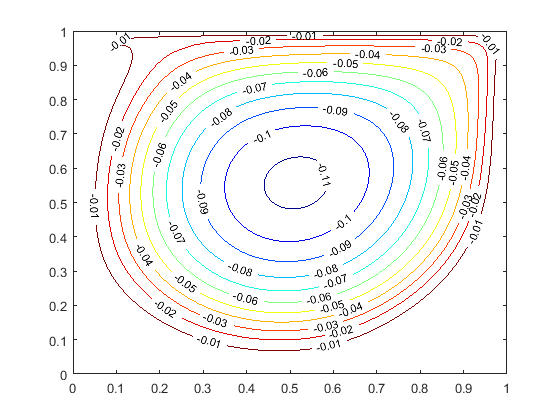}
  $|\psi_{max}|=0.1124$, \\
  $C=(0.5055,0.5675)$
  \label{fig:sfig14}
\end{minipage}
\caption{Comparison of the fluid flow in a porous medium in the case of the lid-driven problem for different Reynolds numbers $ Re= 10 , 100 , 1000 $.}
\label{figRe}
\end{figure}

The Reynolds parameter determines a variation of the geometry of the streamlines, as can be seen from Figure \ref{figRe}. Note that we obtain a decrease of $|\psi_{max}|$, due to the porosity of the medium.

\begin{figure}[h]
\centering
\begin{minipage}{.32\textwidth}
  \centering
  $ Da  = 0.25 $ \vspace*{-0.3cm}
  \includegraphics[width=\linewidth]{SfDFB320K39Re100Da25phi5.png}
  $|\psi_{max}|=0.1066$,\\ 
  $C=(0.6036,0.6736)$
  \label{fig:sfig13}
\end{minipage}
\begin{minipage}{.32\textwidth}
  \centering
  $ Da  = 0.025$ \vspace*{-0.3cm}
  \includegraphics[width=\linewidth]{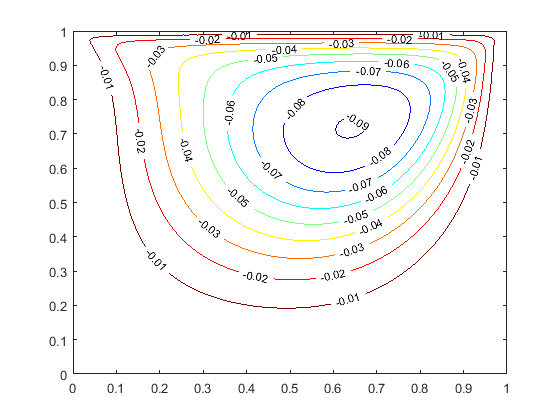}
  $|\psi_{max}|=0.0906$,\\
  $C=(0.6426,0.7207)$
  \label{fig:sfig14}
\end{minipage}
\begin{minipage}{.32\textwidth}
  \centering
  $ Da  = 0.0025$ \vspace*{-0.3cm}
  \includegraphics[width=\linewidth]{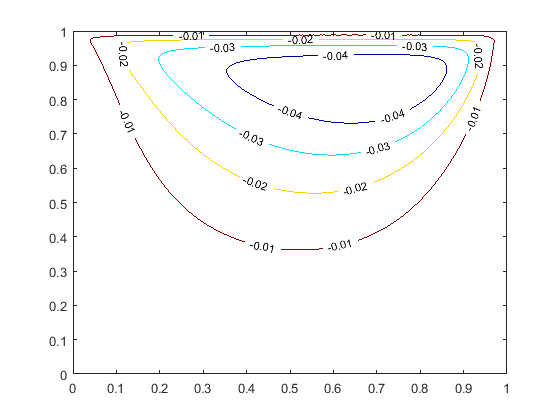}
  $|\psi_{max}|=0.0489$, \\
  $C=(0.692,0.8548)$
  \label{fig:sfig13}
\end{minipage}
\caption{Comparison of the fluid flow in a porous medium in the case of the lid-driven problem for different Darcy numbers $ Da=0.25, 0.025 , 0.0025 $.}
\label{figDa}
\end{figure}

Figure \ref{figDa} shows the structure of the streamlines depending on the Darcy number. It is worth mentioning, that the vortex is moving to the upper right corner of the cavity as the Darcy number decreases and that the strength of the vortex is direct proportional the the Darcy number. This dependency of the fluid flow is consistent with the physical behavior of a fluid.

\subsection{The lid-driven flow in the case of a non-vanishing sliding parameter}
This section is concerned with the mixed Dirichlet-Robin boundary value problem associated to the nonlinear Darcy-Forchheimer-Brinkman system, i.e., we consider an additional sliding parameter imposed on the upper moving wall. A numerical study of the Navier slip condition can be found in \cite{He}. This condition implies that not the whole fluid located in the vicinity of the boundary is driven by the moving wall.

\begin{figure}[ht!]
\centering
\begin{minipage}{.32\textwidth}
  \centering
  $ S= 0 $ \vspace*{-0.3cm}
  \includegraphics[width=\linewidth]{SfDFB320K39Re100Da25phi5.png}
  $|\psi_{max}|=0.1066$, \\ 
  $C=(0.6036,0.6736)$
  \label{fig:sfig14}
\end{minipage}
\begin{minipage}{.32\textwidth}
  \centering
  $S=0.01$ \vspace*{-0.3cm}
  \includegraphics[width=\linewidth]{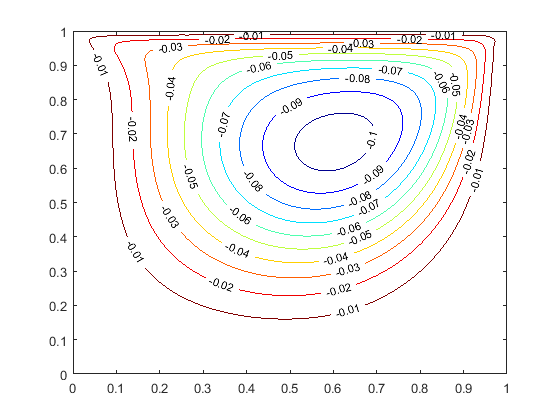}
  $|\psi_{max}|=0.1047$,
  $ C=(0.6066,0.6766)$
  \label{fig:sfig13}
\end{minipage}
\begin{minipage}{.32\textwidth}
  \centering
  $ S = 0.1 $ \vspace*{-0.3cm}
  \includegraphics[width=\linewidth]{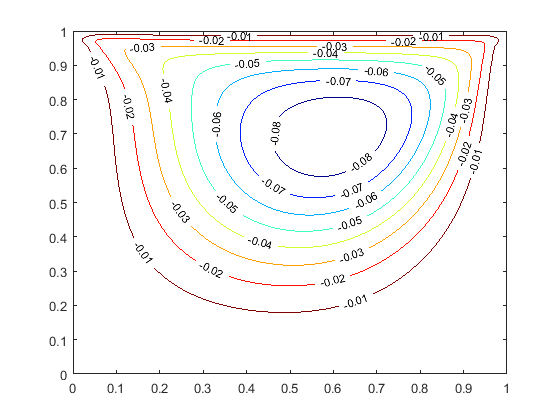}
  $|\psi_{max}|=0.0882,$  
  $C = ( 0.596,0.695)$
  \label{fig:sfig14}
\end{minipage}
\caption{Comparison of the fluid flow in a porous medium in the case of the lid-driven problem for $ S= 0 , 0.01 , 0.1 $.}
\label{TableS}
\end{figure}

Figure \ref{TableS} shows that the structure of the streamlines of the fluid flow change slightly with the increasing sliding parameter and also the strength of the maximal value of the stream function decreases as we increase the sliding parameter, which is in good agreement with the physical meaning of the parameter.

\subsection{The lid-driven flow for the upper wall partitioned in two segments}
Theorem \ref{Theorem 3.2} states that the nonlinear Darcy-Forchheimer-Brinkman system in a two-dimensional Lipschitz domain is well-posed even if the moving wall of the square cavity is partitioned in two opposite moving parts $\Gamma^{+}_{4} := [0,0.5] \times 1$ and $\Gamma_{4}^{-}:= [0.5,1] \times 1$ of same length, which would correspond to the boundary conditions
\begin{equation}
\frac{\partial \Psi}{\partial Y} = -1 \quad \mbox{on} \ \Gamma^{-}_{4}, \quad  \ \frac{\partial \Psi}{\partial Y} = 1 + S \frac{\partial^{2} \Psi}{\partial Y^{2}} \quad \mbox{on} \ \Gamma^{+}_{4}.
\end{equation}
The intersection point of the two boundaries $\Gamma^{+}_{4} \cap \Gamma^{-}_{4}$ represents a measure zero subset on which either Dirichlet or Robin boundary conditions are not defined, since a perfect physical contact between the two partitions cannot be obtained. Clearly the sliding parameter $S$  satisfies the condition \eqref{lambda}. Both segments move with the same velocity in opposite directions, but the slip condition $S \neq 0$ is imposed only on the upper left wall.
\begin{figure}[h]
\centering
\begin{minipage}{.32\textwidth}
  \centering
  $ \xrightarrow{\ \ S = 0 \ \ } \qquad \xleftarrow{ \ \ S = 0 \ \ }$ \vspace{-0.3cm}
  \includegraphics[width=\linewidth]{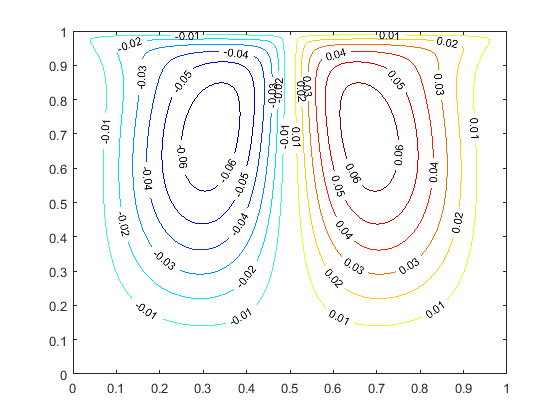}
  $\psi_{min}=-0.0668$, $C_{min}=(0.320,0.696)$,\\ 
  $\psi_{max}=0.0668$, $C_{max}=(0.679,0.696)$
  \label{fig:sfig14}
\end{minipage}
\begin{minipage}{.32\textwidth}
  \centering
  $ \xrightarrow{\ \  S = 0 \ \ } \quad \xleftarrow{  S = 0.01  }$ \vspace{-0.3cm}
  \includegraphics[width=\linewidth]{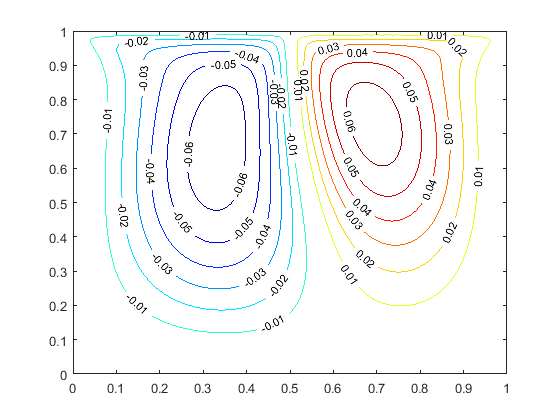}
  $\psi_{min}=-0.0669$, $C_{min}=(0.335,0.656)$, \\
  $\psi_{max}=0.0659$, $C_{max}=(0.692,0.733)$
  \label{fig:sfig13}
\end{minipage}
\begin{minipage}{.32\textwidth}
  \centering
  $ \xrightarrow{\ \  S = 0 \ \ } \quad \xleftarrow{ \ S = 0.1 \ \ }$ \vspace{-0.3cm}
  \includegraphics[width=\linewidth]{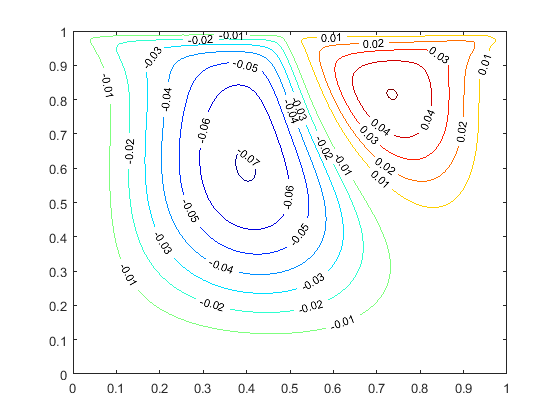}
  $\psi_{min}=-0.0705$, $C_{min}=(0.398,0.608)$, \\
  $\psi_{max}=0.0501$, $C_{max}=(0.734,0.815)$
  \label{fig:sfig14}
\end{minipage}
\caption{Comparison of the fluid flow in a porous medium in the case of the lid-driven problem for $ S= 0 , 0.01 , 0.1 $.}
\label{fig2S}
\end{figure}

Figure \ref{fig2S} show the importance of the additional sliding parameter, because the stronger vorticity in given by the boundary segment without sliding condition and the corresponding vortex fills the main part of the cavity. 

\section{Conclusions}
Boundary integral equations (BIE's) are closely related to Boundary Element Methods, such as DRBEM. Although, we derive the BIE for the mixed Dirichlet-Neumann boundary value problem for the homogeneous Brinkman system, the well-posedness result is extended by means of potential theory to the Poisson problem for the Brinkman system and to Dirichlet-Robin boundary conditions using the linearity of the solution operator. Having these results, we give a well-posedness result of the weak solution for the nonlinear Darcy-Forchheimer-Brink\-man system, by showing that the nonlinear operator describing the system has a fixed point using Banach-Caciopolli fixed point theorem, under the assumption of small boundary values.

The lid driven cavity flow problem is considered for the purpose of giving an application to the well-posedness results given in the first part and in order to use a Boundary Element Method (DRBEM) for the computation of such a solution. Since the Lipschitz domain considered is two-dimensional, it is convenient to introduce the steam function and the vorticity, obtaining Poisson equations in both functions, which can be solved in an iterative way considering the fundamental solution of the Laplacian. 


The first order perturbation of the Navier-Stokes system through the Darcy coefficient describes the motion of an incompressible fluid located in a saturated porous domain. The physical properties of such a fluid are described in the case of the lid driven cavity flow problem for some representative value of the Reynolds number $Re$, the Darcy number $Da$ and the sliding parameter $S$, for a constant porosity $\phi = 0.5$. Moreover, in order to highlight the importance of the sliding parameter imposed on the moving wall, we consider the moving wall partitioned into two opposite moving parts.

\section*{Acknowledgement}
Part of this work was done in October-November 2016 during a visit at the Department of Mathematics of the
University of Padova. The author is grateful to the members of this department and specially to Prof. Massimo Lanza de Cristoforis for their hospitality.

\bibliographystyle{plain}


\end{document}